\newcommand{\R}{\mathbb{R}}
\newcommand{\N}{\mathbb{N}}
\newcommand{\E}{\mathbb{E}}
\newcommand{\id}{\operatorname{id}}
\renewcommand{\P}{\mathbb{P}}
\newcommand\numberthis{\addtocounter{equation}{1}\tag{\theequation}}
\newtheorem{theorem}{Theorem}[section]
\newtheorem{lemma}[theorem]{Lemma}
\newtheorem{cor}[theorem]{Corollary}
\newtheorem{prop}[theorem]{Proposition}
\begin{document}
\title{A proof that deep artificial neural networks overcome the curse
	of dimensionality in the numerical approximation of Kolmogorov\\
	partial differential equations
	with constant diffusion and nonlinear drift coefficients}

\author{Arnulf Jentzen$^1$,  Diyora Salimova$^2$, and Timo Welti$^3$
	\bigskip
	\\
	\small{$^1$Seminar for Applied Mathematics, Department of Mathematics,}\\
		\small{ETH Zurich, Switzerland, e-mail:   arnulf.jentzen@sam.math.ethz.ch}
	\smallskip
	\\
\small{$^2$Seminar for Applied Mathematics, Department of Mathematics,}\\
\small{ETH Zurich, Switzerland, e-mail:  diyora.salimova@sam.math.ethz.ch}
	\smallskip
	\\
	\small{$^3$Seminar for Applied Mathematics, Department of Mathematics,}\\
		\small{ETH Zurich, Switzerland, e-mail:   timo.welti@sam.math.ethz.ch}}

\maketitle

\begin{abstract}
In recent years deep artificial neural networks (DNNs)  have been successfully
employed in numerical simulations for a multitude of computational problems 
including, for example, object and face recognition, natural language processing, 
fraud detection, computational advertisement, 
and numerical approximations of partial differential equations (PDEs).
 These numerical simulations indicate that 
DNNs seem to  possess the fundamental flexibility to overcome 
the curse of dimensionality in the sense that the 
number of real parameters used to describe the 
DNN grows at most polynomially in both the 
reciprocal of the prescribed approximation 
accuracy $ \varepsilon > 0 $ and the dimension $ d \in \N $
of the function which the DNN aims to approximate in such computational problems.
There is also a large number of rigorous mathematical approximation results 
for artificial neural networks in the scientific literature 
but there are only a few special situations where results in the literature 
can rigorously  justify the success of DNNs in high-dimensional function approximation. 
The key contribution of this  paper is to reveal that DNNs do overcome the curse of dimensionality 
in the numerical approximation of Kolmogorov PDEs with constant diffusion and nonlinear drift coefficients. 
We prove that the number of parameters used to describe the employed DNN grows at most 
polynomially in both
 the PDE dimension $ d \in \N $ and the reciprocal of the prescribed approximation accuracy $ \varepsilon > 0 $. 
A crucial ingredient in our proof is the fact that the artificial neural network 
used to approximate the solution of the PDE is indeed 
a deep artificial neural network with a large number of hidden layers. 
\end{abstract}

\tableofcontents

\section{Introduction}
\label{sec:intro}
In recent years deep artificial neural networks (DNNs) 
 have been successfully
employed in numerical simulations for a multitude of computational problems 
including, for example, object and face recognition  (cf., e.g., \cite{huang2017densely,krizhevsky2012imagenet,simonyan2014very,Taigman2014,wangfacerecognition2015}
  and the references mentioned therein), 
natural language processing (cf., e.g., \cite{dahl2012context,graves2013speech,hinton2012deep,HuConvolutional2014,Kalchbrenner14aconvolutional,wu2016stimulated}
and the references mentioned therein), 
fraud detection
(cf., e.g., \cite{CHOUIEKH2018,Roy2018} and the references mentioned therein), computational advertisement
(cf., e.g., \cite{Wang2017Ad,Zhai2016} and the references mentioned therein), 
and numerical approximations of partial differential equations (PDEs) 
(cf., e.g., \cite{Kolmogorov,beck2017machine,becker2018deep, weinan2017deep, weinan2018deep,ElbraechterSchwab2018, fujii2017asymptotic, GrohsWurstemberger2018, Han2018PNAS, henry2017deep, khoo2017solving, mishra2018machine, nabian2018deep, raissi2018forward,sirignano2017dgm}). 
 These numerical simulations indicate that 
DNNs seem to possess the fundamental flexibility to overcome 
the curse of dimensionality in the sense that the 
number of real parameters used to describe the 
DNN grows at most polynomially in both the 
reciprocal of the prescribed approximation 
accuracy $ \varepsilon > 0 $ and the dimension $ d \in \N $
of the function which the DNN aims to approximate in such computational problems.
There is also a large number of rigorous mathematical approximation results for artificial neural networks 
in the scientific literature (see, for instance,  \cite{bach2017breaking,Barron1993, Barron1994,blum1991approximation,bolcskei2017optimal,Burger2001235,CandesDiss,chen1995approximation,ChuXM1994networksforlocApprox,Cybenko1989,DeVore1997approxfeedforward,EWang2018,ElbraechterSchwab2018,Eldan2016PowerofDepth,ellacott1994aspects,Funahashi1989183,GrohsWurstemberger2018,hartman1990layered,Hornik1991251,hornik1993some,Hornik1989universalApprox,hornik1990universal,leshno1993multilayer,Mhaskar1995151,Mhaskar1996NNapprox,Mhaskar2016DeepVSShallow, MontanelliDu2017, NguyenThien1999687,park1991universal,perekrestenko2018universal,petersen2018topological,petersen2017optimal,pinkus1999approximation,Schmitt1999lowercomplbounds,SchwabZech2018,ShaCC2015provableAppDNN,sirignano2017dgm,yarotsky2017error,yarotsky2018universal} and the references mentioned therein)
but there are only a few special situations where results in the literature 
can rigorously justify the success of DNNs in high-dimensional function approximation.

The key contribution of this  paper is to reveal that DNNs do overcome the curse of dimensionality 
in the numerical approximation of Kolmogorov PDEs with constant diffusion and nonlinear drift coefficients. 
More specifically, the main result of this article, Theorem~\ref{thm:PDE_approx_Lp} 
in Subsection~\ref{sec:main_result} below, 
proves that the number of parameters used to describe the employed DNN grows 
at most polynomially in both  the PDE dimension $ d \in \N $ and the reciprocal of the prescribed approximation accuracy $ \varepsilon > 0 $ and, thereby, 
we establish that DNN approximations do indeed overcome the curse of dimensionality in the 
numerical approximation of such PDEs.
To illustrate the statement of Theorem~\ref{thm:PDE_approx_Lp} below in more details, 
we now present the following special case of Theorem~\ref{thm:PDE_approx_Lp}.

\begin{theorem}
\label{thm:intro}
Let 
$
A_d = ( a_{ d, i, j } )_{ (i, j) \in \{ 1, \dots, d \}^2 } \in \R^{ d \times d }
$,
$ d \in \N $,
be symmetric positive semidefinite matrices, 
for every $ d \in \N $ 
let 
$
\left\| \cdot \right\|_{ \R^d } \colon \R^d \to [0,\infty)
$
be the $ d $-dimensional Euclidean norm,
let
$ f_{0,d} \colon \R^d \to \R $, $ d \in \N $,
and
$ f_{ 1, d } \colon \R^d \to \R^d $,
$ d \in \N $,
be functions,
let 
$ \mathbf{A}_d \colon \R^d \to \R^d $, 
$ d \in \N $, 
be the functions 
which satisfy 
for all 
$
d \in \N
$,
$ x = ( x_1, \dots, x_d ) \in \R^d $
that
$ 
\mathbf{A}_d(x)
=
( \max\{x_1, 0\}, \ldots, \max\{x_d, 0\} )
$,
let 
\begin{equation}
\label{eq:set_N_intro}
\mathcal{N}
=
\cup_{ L \in \{ 2, 3, 4, \dots \} }
\cup_{ ( l_0, l_1, \ldots, l_L ) \in \N^{ L + 1 } }
(
\times_{ n = 1 }^L 
(
\R^{ l_n \times l_{ n - 1 } } \times \R^{ l_n } 
)
)
,
\end{equation}
let 
$
\mathcal{P}
\colon \mathcal{N} \to \N
$
and
$
\mathcal{R} \colon 
\mathcal{N} 
\to 
\cup_{ k, l \in \N } C( \R^k, \R^l )
$
be the functions which satisfy 
for all 
$ L \in \{ 2, 3, 4, \dots \} $, 
$ l_0, l_1, \ldots, l_L \in \N $, 
$ 
\Phi = ((W_1, B_1), \ldots, $ $ (W_L, B_L)) \in 
( \times_{ n = 1 }^L (\R^{ l_n \times l_{n-1} } \times \R^{ l_n } ) )
$,
$ x_0 \in \R^{l_0} $, 
$ \ldots $, 
$ x_{ L - 1 } \in \R^{ l_{ L - 1 } } $ 
with 
$ 
\forall \, n \in \N \cap [1,L) \colon 
x_n = \mathbf{A}_{ l_n }( W_n x_{ n - 1 } + B_n )
$
that 
$
\mathcal{P}( \Phi )
=
\textstyle
\sum\nolimits_{
	n = 1
}^L
l_n ( l_{ n - 1 } + 1 )
$,
$
\mathcal{R}(\Phi) \in C( \R^{ l_0 } , \R^{ l_L } )
$, 
and
\begin{equation}
\label{eq:def_R_intro}
( \mathcal{R} \Phi )( x_0 ) = W_L x_{L-1} + B_L ,
\end{equation}
let 
$ T, \kappa \in (0,\infty) $,  
$
( \phi^{ m, d }_{ \varepsilon } )_{ 
	(m, d, \varepsilon) \in \{ 0, 1 \} \times \N \times (0,1] 
} 
\subseteq \mathcal{N}
$,
assume for all
$ d \in \N $, 
$ \varepsilon \in (0,1] $, 
$ 
x, y \in \R^d
$
that
$
\mathcal{R}( \phi^{ 0, d }_{ \varepsilon } )
\in 
C( \R^d, \R )
$,
$
\mathcal{R}( \phi^{ 1, d }_{ \varepsilon } )
\in
C( \R^d, \R^d )
$,
$
|
f_{ 0, d }( x )
| 
+
\sum_{ i , j = 1 }^d
| a_{ d, i, j } |
\leq 
\kappa d^{ \kappa }
( 1 + \| x \|^{ \kappa }_{ \R^d } )
$,
$
\| 
f_{ 1, d }( x ) 
- 
f_{ 1, d }( y )
\|_{ \R^d }
\leq 
\kappa 
\| x - y \|_{ \R^d } 
$,
$
\|
( \mathcal{R} \phi^{ 1, d }_{ \varepsilon } )(x)    
\|_{ \R^d }	
\leq 
\kappa ( d^{ \kappa } + \| x \|_{ \R^d } )
$,
$ 
\sum_{ m = 0 }^1
\mathcal{P}( \phi^{ m, d }_{ \varepsilon } ) 
\leq \kappa d^{ \kappa } \varepsilon^{ - \kappa }
$, $ |( \mathcal{R} \phi^{ 0, d }_{ \varepsilon } )(x) - ( \mathcal{R} \phi^{ 0, d }_{ \varepsilon } )(y)| \leq \kappa d^{\kappa} (1 + \|x\|_{\R^d}^{\kappa} + \|y \|_{\R^d}^{\kappa})\|x-y\|_{\R^d}$, 
and
\begin{equation}
\label{eq:intro_hypo}
| 
f_{ 0, d }(x) 
- 
( \mathcal{R} \phi^{ 0, d }_{ \varepsilon } )(x)
|
+
\| 
f_{ 1, d }(x) 
- 
( \mathcal{R} \phi^{ 1, d }_{ \varepsilon } )(x)
\|_{ \R^d }
\leq 
\varepsilon \kappa d^{ \kappa }
(
1 + \| x \|^{ \kappa }_{ \R^d }
)
,
\end{equation}
and for every $ d \in \N $ let
	$ u_d \colon [0,T] \times \R^{d} \to \R $
	be an 
	at most polynomially growing viscosity solution of
	\begin{equation}
	\label{eq:PDE_intro}
	\begin{split}
	( \tfrac{ \partial }{\partial t} u_d )( t, x ) 
	& = 
	( \tfrac{ \partial }{\partial x} u_d )( t, x )
	\,
	f_{ 1, d }( x )
	+
	\textstyle
	\sum\limits_{ i, j = 1 }^d
	\displaystyle
	a_{ d, i, j }
	\,
	( \tfrac{ \partial^2 }{ \partial x_i \partial x_j } u_d )( t, x )
	\end{split}
	\end{equation}
	with $ u_d( 0, x ) = f_{ 0, d }( x ) $
	for $ ( t, x ) \in (0,T) \times \R^d $. 
	Then for every $ p \in (0,\infty) $ 
	there exist
	$
	( 
	\psi_{ d, \varepsilon } 
	)_{ (d , \varepsilon)  \in \N \times (0,1] } \subseteq \mathcal{N}
	$,
	$
	c \in \R
	$
	such that
	for all 
	$
	d \in \N 
	$,
	$
	\varepsilon \in (0,1] 
	$
	it holds that
	$
	\mathcal{P}( \psi_{ d, \varepsilon } ) 
	\leq
	c \, d^c \varepsilon^{ - c } 
	$,
	$
	\mathcal{R}( \psi_{ d, \varepsilon } )
	\in C( \R^{ d }, \R )
	$,
	and
	\begin{equation}
	\label{eq:intro_statement}
	\left[
	\int_{ [0,1]^d }
	|
	u_d(T,x) - ( \mathcal{R} \psi_{ d, \varepsilon } )( x )
	|^p
	\,
	dx
	\right]^{ \nicefrac{ 1 }{ p } }
	\leq
	\varepsilon 
	.
	\end{equation}
\end{theorem}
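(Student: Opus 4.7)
The plan is to pass through a stochastic representation of $u_d(T,\cdot)$, discretize the associated SDE by an Euler--Maruyama scheme driven by the DNN approximation of the drift, then Monte Carlo the resulting expectation, and finally realize everything as a single DNN. Concretely, pick $\sigma_d \in \R^{d\times d}$ with $\sigma_d \sigma_d^\top = 2 A_d$ (possible since $A_d$ is symmetric positive semidefinite) and, on a suitable probability space, let $(W^d_t)_{t\in[0,T]}$ be a standard $d$-dimensional Brownian motion and let $X^{d,x}$ solve $dX^{d,x}_t = f_{1,d}(X^{d,x}_t)\,dt + \sigma_d\,dW^d_t$ with $X^{d,x}_0 = x$. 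The Lipschitz assumption on $f_{1,d}$, polynomial growth of $f_{0,d}$ and $A_d$, and the at most polynomial growth of $u_d$ give, via a viscosity Feynman--Kac identity, $u_d(T,x) = \E[f_{0,d}(X^{d,x}_T)]$.

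\textbf{Discretization, perturbation, and Monte Carlo.} Next, for a subtolerance $\delta\in(0,1]$ and a number of steps $N\in\N$, introduce the perturbed Euler--Maruyama scheme $\tilde Y^{d,x}_{n+1} = \tilde Y^{d,x}_n + (T/N)(\mathcal R\phi^{1,d}_\delta)(\tilde Y^{d,x}_n) + \sigma_d(W^d_{(n+1)T/N}-W^d_{nT/N})$, $\tilde Y^{d,x}_0 = x$. The linear-growth bound on $\mathcal R\phi^{1,d}_\delta$ provides uniform $L^{2p}$ moment bounds for $\tilde Y^{d,x}_N$, and a standard strong-convergence-with-perturbation argument (combining the $L^{2p}$-Euler error for the Lipschitz drift $f_{1,d}$ with a discrete Gronwall that absorbs the drift discrepancy through \eqref{eq:intro_hypo}) yields $\E[\|X^{d,x}_T - \tilde Y^{d,x}_N\|_{\R^d}^{2p}]^{1/(2p)} \le C_d(N^{-1/2}+\delta)$ with $C_d$ polynomial in $d$, uniformly for $x\in[0,1]^d$. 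Replacing $f_{0,d}$ by $\mathcal R\phi^{0,d}_\delta$ in the expectation (using the polynomial-growth Lipschitz hypothesis on $\mathcal R\phi^{0,d}_\delta$ together with \eqref{eq:intro_hypo}) and approximating the expectation by a Monte Carlo average over $M$ independent copies $\tilde Y^{d,x,(m)}_N$ produces
\begin{equation*}
\E\!\left[\int_{[0,1]^d}\!\Big|u_d(T,x)-\tfrac{1}{M}\textstyle\sum_{m=1}^{M}(\mathcal R\phi^{0,d}_\delta)(\tilde Y^{d,x,(m)}_N)\Big|^p dx\right]^{\!1/p} \le C'_d\bigl(N^{-1/2}+\delta+M^{-1/2}\bigr),
\end{equation*}
and choosing $N$, $M$, $\delta^{-1}$ polynomial in $d$ and $\varepsilon^{-1}$ makes the right-hand side $\le\varepsilon/2$.

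\textbf{DNN realization and conclusion.} The algebraic core is that each one-step map $y\mapsto y + (T/N)(\mathcal R\phi^{1,d}_\delta)(y) + c$, with $c\in\R^d$ a fixed Brownian increment, is realizable as a DNN of size $O(\mathcal P(\phi^{1,d}_\delta))$ by placing $\phi^{1,d}_\delta$ in one branch and a ReLU-identity representation of $y$ (via $y_i=\max\{y_i,0\}-\max\{-y_i,0\}$, i.e.\ $y = \mathbf A_d(y)-\mathbf A_d(-y)$) in a parallel branch, then combining them in a final affine layer. $N$ such one-step networks can be composed into one DNN of parameter count $O(N\,\mathcal P(\phi^{1,d}_\delta))$ by inserting $2d$-wide identity blocks across layer boundaries; composing further with $\phi^{0,d}_\delta$ and parallelizing $M$ independent such subnetworks (differing only in the biases encoding Brownian increments) and ending with a fixed averaging linear layer yields, for each sample $\omega$ of the $M$ Brownian paths, a network $\psi_{d,\varepsilon,\omega}\in\mathcal N$ with $\mathcal R\psi_{d,\varepsilon,\omega}\in C(\R^d,\R)$ and $\mathcal P(\psi_{d,\varepsilon,\omega})\le C\,N\,M\,\kappa d^\kappa\delta^{-\kappa}$, polynomial in $d$ and $\varepsilon^{-1}$. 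A Fubini argument on $[0,1]^d\times\Omega$ then selects $\omega^\ast$ for which $\|u_d(T,\cdot)-\mathcal R\psi_{d,\varepsilon,\omega^\ast}\|_{L^p([0,1]^d)}\le\varepsilon$, giving the desired deterministic $\psi_{d,\varepsilon}$.

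\textbf{Main obstacle.} The technical crux is the simultaneous balancing of $N$, $M$, and $\delta$ as polynomial functions of $(d,\varepsilon^{-1})$ so that (i) each of the three error sources (Euler, drift/terminal-function perturbation, Monte Carlo) contributes $O(\varepsilon)$, (ii) the multiplicative constants produced by Gronwall, moment bounds under only linear growth of $\mathcal R\phi^{1,d}_\delta$, and the polynomial-growth Lipschitz constant of $\mathcal R\phi^{0,d}_\delta$ all remain polynomial in $d$, and (iii) the resulting DNN parameter count $NM\,\mathcal P(\phi^{1,d}_\delta)+\mathcal P(\phi^{0,d}_\delta)$ stays polynomial in $(d,\varepsilon^{-1})$. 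This balancing is possible precisely because $N$ enters as depth rather than width, explaining the necessity of \emph{deep} (rather than shallow) networks in the conclusion.
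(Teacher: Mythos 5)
Your proposal is correct and follows essentially the same route as the paper: Feynman--Kac representation, Euler--Maruyama discretization with the DNN drift $\mathcal{R}\phi^{1,d}_\delta$, strong/weak perturbation estimates with Gronwall-type constants polynomial in $d$, Monte Carlo averaging, realization of the Euler steps and the average as a single deep network via parallel ReLU-identity channels and composition with $\phi^{0,d}_\delta$, and a Fubini/Markov selection of a good realization $\omega^\ast$ (the paper's Corollary~\ref{cor:random_field} and Propositions~\ref{prop:perturbation_PDE_2}, \ref{prop:composition:ANN}, \ref{prop:sum:comp:ANN}, Lemmas~\ref{lem:sum:ANN}, \ref{lem:identity}). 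The only minor quantitative difference is your parameter count $O(NM\,\mathcal P(\phi^{1,d}_\delta))$, whereas in the paper's convention (which counts all entries of the stacked weight matrices, including zeros) the parallelization and composition bounds are of the form $M^2\mathcal P(\phi_1)$ and $\mathcal P(\phi_1)+(\mathcal P(\phi_2)+\mathcal P(\mathbb I))^3$, but this does not affect polynomiality in $(d,\varepsilon^{-1})$.
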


Theorem~\ref{thm:intro} 
is an immediate consequence of Corollary~\ref{cor:lebesgue} 
in Subsection~\ref{sec:lebesgue} below. 
Corollary~\ref{cor:lebesgue}, in turn, 
is a special case of Theorem~\ref{thm:PDE_approx_Lp}. 
Next we add some comments regarding the mathematical objects appearing 
in Theorem~\ref{thm:intro}. 
Theorem~\ref{thm:intro} 
is an approximation result for rectified DNNs and 
for every $ d \in \N $ the function
$ {\bf A}_d \colon \R^d \to \R^d $ 
in Theorem~\ref{thm:intro} above
describes the $ d $-dimensional 
rectifier function. 
The set $ \mathcal{N} $ in \eqref{eq:set_N_intro} in Theorem~\ref{thm:intro} above 
is a set of tuples of real numbers which, in turn, 
represents the set of all artificial neural networks. 
For every artificial neural network $ \Phi \in \mathcal{N} $ 
in Theorem~\ref{thm:intro} above
we have that $ \mathcal{R}( \Phi ) \in \cup_{ k, l \in \N } C( \R^k, \R^l ) $ 
represents the function 
associated to the artificial neural network $ \Phi $ 
(cf.\ \eqref{eq:def_R_intro} in Theorem~\ref{thm:intro}). 
The function 
$ \mathcal{R} \colon \mathcal{N} \to \cup_{ k, l \in \N } C( \R^k, \R^l ) $ 
from the set $ \mathcal{N} $ of all artificial neural networks to the union 
$ \cup_{ k, l \in \N } C( \R^k, \R^l ) $ 
of continuous functions thus describes the realizations 
associated to the artificial neural networks. 
Moreover, 
for every artificial neural network $ \Phi \in \mathcal{N} $
in Theorem~\ref{thm:intro} above we have that 
$ \mathcal{P}( \Phi ) \in \N $ represents the number of real parameters 
which are used to describe the artificial neural network $ \Phi $. 
In particular, for every artificial neural network $ \Phi \in \mathcal{N} $
in Theorem~\ref{thm:intro} we can think of 
$ \mathcal{P}( \Phi ) \in \N $ 
as a quantity related to the amount of memory storage 
which is needed to store the artificial neural network.
The real number $ \kappa > 0 $ in Theorem~\ref{thm:intro} 
is an arbitrary constant used to formulate the hypotheses in Theorem~\ref{thm:intro} 
(cf.\ \eqref{eq:intro_hypo} in Theorem~\ref{thm:intro} above) 
and the real number $ T > 0 $ 
in Theorem~\ref{thm:intro} describes the time horizon under consideration. 
Our key hypothesis in Theorem~\ref{thm:intro} is the assumption 
that both the possibly nonlinear initial value functions $ f_{ 0, d } \colon \R^d \to \R $,
$ d \in \N $, 
and the possibly nonlinear drift coefficient functions 
$ f_{ 1, d } \colon \R^d \to \R^d $, $ d \in \N $,
of the PDEs in \eqref{eq:PDE_intro} can be approximated 
without the curse of dimensionality by means of DNNs 
(see \eqref{eq:intro_hypo} above for details). 
Simple examples for the functions 
$ f_{ 0, d } \colon \R^d \to \R $, $ d \in \N $,
and 
$ f_{ 1, d } \colon \R^d \to \R^d $, $ d \in \N $, 
which fulfill the hypotheses of Theorem~\ref{thm:intro} above
are, for instance, provided by the choice that for all $ d \in \N $,
$ x = ( x_1, x_2, \dots, x_d ) \in \R^d $ it holds that
$
  f_{ 0, d }( x ) = \max\{ x_1, x_2, \dots, x_d \}
$
and 
$
  f_{ 1, d }( x ) = x \, [ 1 + \| x \|^2_{ \R^d } ]^{ - 1 }
$.
A natural example for the matrices 
$ A_d = ( a_{ d, i, j } )_{ (i,j) \in \{ 1, \dots, d \}^2 } \in \R^{ d \times d } $,
$ d \in \N $,
fulfilling the hypotheses in Theorem~\ref{thm:intro} above 
is, for instance, provided by the choice that for all 
$ d \in \N $
it holds that
$ A_d \in \R^{ d \times d } $ is the $ d $-dimensional identity matrix 
in which case the second order term in \eqref{eq:PDE_intro} 
reduces to the $ d $-dimensional Laplace operator. 
Roughly speaking, Theorem~\ref{thm:intro} above proves that 
if both the initial value functions and the drift coefficient functions 
in the PDEs in \eqref{eq:PDE_intro} can be approximated without the curse of dimensionality 
by means of DNNs, then the solutions of the PDEs can also be approximated 
without the curse of dimensionality by means of DNNs 
(see \eqref{eq:intro_statement} above for details). 
In numerical simulations involving DNNs for computational problems from data science (e.g., object and face recognition, natural language processesing, fraud detection, computational advertisement, etc.) it is often not entirely clear how to precisely describe what the involved DNN approximations should achieve and it is thereby often not entirely clear how to precisely specify the approximation error of the employed DNN. The recent articles \cite{weinan2017deep, Han2018PNAS} (cf., e.g., also \cite{Kolmogorov,beck2017machine,becker2018deep, weinan2018deep,ElbraechterSchwab2018, fujii2017asymptotic, GrohsWurstemberger2018, henry2017deep, khoo2017solving, mishra2018machine, nabian2018deep, raissi2018forward,sirignano2017dgm}) suggest to use machine learning algorithms which employ DNNs to approximate solutions and derivatives of solutions, respectively, of PDEs and in the framework of these references it is perfectly clear what the involved DNN approximations should achieve as well as how to specify the approximation error: the DNN should approximate the unique deterministic function which is the solution of the given deterministic PDE (cf., e.g., Han et al. \cite[\emph{Neural Network Architecture} on page 5]{Han2018PNAS} and Beck et al.~\cite[Proposition 2.7 and (103)]{Kolmogorov}. The above named references thereby open up the possibility for a complete and rigorous mathematical error analysis for the involved deep learning algorithms and Theorems~\ref{thm:intro} and~\ref{thm:PDE_approx_Lp}, in particular, provide some first contributions to this new research topic. 
The statements of Theorems~\ref{thm:intro} and~\ref{thm:PDE_approx_Lp} and their strategies of proof, respectively, are inspired by the article Grohs et al.~\cite{GrohsWurstemberger2018} (cf., e.g., Theorem 1.1 in~\cite{GrohsWurstemberger2018}) in which similar results as Theorems~\ref{thm:intro} and~\ref{thm:PDE_approx_Lp}, respectively, but for Kolmogorov PDEs with affine linear drift and diffusion coefficient functions have been proved. The main difference of the arguments in \cite{GrohsWurstemberger2018} to this paper is the deepness of the involved artificial neural networks. Roughly speaking, the affine linear structure of the coefficients of the Kolmogorov PDEs in \cite{GrohsWurstemberger2018} allowed the authors in \cite{GrohsWurstemberger2018} to essentially employ a flat artificial neural network for approximating the solution flow mapping of such PDEs. In this work the drift coefficient is nonlinear and, in view of this property, we employ in our proofs of Theorem~\ref{thm:intro} and Theorem~\ref{thm:PDE_approx_Lp}, respectively, iterative Euler-type discretizations for the underlying stochastic dynamics associated to the PDEs in~\eqref{eq:PDE_intro}. The iterative Euler-type discretizations result in multiple compositions which, in turn, result in deep artificial neural networks with a large number of hidden layers. In particular, in our proof of Theorem~\ref{thm:intro} and Theorem~\ref{thm:PDE_approx_Lp}, respectively, the artificial neural networks $ \psi_{ d, \varepsilon } \in \mathcal{N} $,
$ d \in \N $, $ \varepsilon \in (0,1] $,  approximating the solutions of the PDEs in~\eqref{eq:PDE_intro} (see~\eqref{eq:intro_statement} above) are also deep artificial neural networks with a large number of hidden layers  even 
if the artificial neural networks approximating or representing 
$ f_{ 0, d } \colon \R^d \to \R $, $ d \in \N $,
and 
$ f_{ 1, d } \colon \R^d \to \R^d $, $ d \in \N $,
are flat with one hidden layer only. Moreover,  our proofs of Theorem~\ref{thm:intro}
and Theorem~\ref{thm:PDE_approx_Lp}, respectively, 
reveal that the number of hidden layers increases to infinity 
as the prescribed approximation accuracy $ \varepsilon > 0 $ 
decreases to zero and the PDE dimension increases to infinity, respectively (cf.~\eqref{eq:control:delta} and \eqref{eq:bar:omega} below).

Theorem~\ref{thm:intro} above and 
Theorem~\ref{thm:PDE_approx_Lp}, respectively, 
are purely deterministic approximation results for DNNs and 
solutions of a class of deterministic PDEs. 
Our proofs of Theorem~\ref{thm:intro} and 
Theorem~\ref{thm:PDE_approx_Lp}, respectively, 
are, however, heavily relying on probabilistic arguments 
on a suitable artificial probability space. 
Roughly speaking, in our proof of Theorem~\ref{thm:PDE_approx_Lp} 
we
\begin{enumerate}[(I)]
\item
\label{item:I}
design a suitable random DNN on this artificial probability space,
\item 
\label{item:II}
show that this suitable random DNN is in a suitable sense 
close to the solution of the considered deterministic PDE, 
and
\item
\label{item:III}
employ items~\eqref{item:I}--\eqref{item:II} above to establish the existence of a realization 
with the desired approximation properties on the artificial probability space.
\end{enumerate}
The specific realization of this random DNN is then a deterministic DNN approximation 
of the solution of the considered deterministic PDE with the desired approximation properties.
The main work of the paper is the construction and the analysis of this random DNN. 
For the construction of the random DNN we need suitable general flexibility results 
for rectified DNNs which, roughly speaking, 
demonstrate how rectified DNNs can be composed with a 
moderate growth of the number of involved parameters (see Subsection~\ref{sec:composition:ANN} below for details). 
The construction of the random DNN 
(cf.\ \eqref{item:I} above) 
is essentially performed in Section~\ref{sec:dnn:calculus}
and Section~\ref{sec:DNN_PDEs}
and the analysis of the random DNN 
(cf.\ \eqref{item:II} above)
is essentially the subject of Section~\ref{sec:feynman:kac}, Section~\ref{sec:sdes}, 
and 
Subsection~\ref{sec:PDE_approx_Lp}.
The argument for the existence of the realization with  suitable approximation properties 
on the artificial probability space (cf.\ \eqref{item:III} above) is provided in Section~\ref{sec:existence} 
and Subsection~\ref{sec:PDE_approx_Lp}.

\section[Existence of a realization on a suitable artificial probability space]{On the existence of a realization with the desired approximation properties on a suitable artificial probability space}
\label{sec:existence}

In this section we establish in Corollary~\ref{cor:random_field} in Subsection~\ref{sec:existence_realization} below on a very abstract level, roughly speaking, the argument that good approximation properties of the random DNN (cf.\ items~\eqref{item:I}--\eqref{item:II} in Section~\ref{sec:intro} above) imply the existence of a realization with  suitable approximation properties on the artificial probability space (cf.\ item~\eqref{item:III} in Section~\ref{sec:intro} above). The function $ u \colon \R^d \to \R $ in Corollary~\ref{cor:random_field} will essentially take the role of the solution of the considered deterministic PDE and the random field $ X \colon \R^d \times \Omega \to \R $ will essentially take the role of the random DNN.
Our proof of Corollary~\ref{cor:random_field} is based on an application of Proposition~\ref{prop:markov3} in Subsection~\ref{sec:existence_realization} below.
Proposition~\ref{prop:markov3} is, very loosely speaking, an abstract generalized version of Corollary~\ref{cor:random_field}.
Our proof of Proposition~\ref{prop:markov3} is based on an application of the elementary Markov-type estimate in Lemma~\ref{lem:Markov2} in Subsection~\ref{sec:markov} below.
Lemma~\ref{lem:Markov2}, in turn, follows from the Markov inequality in Lemma~\ref{lem:markov} in Subsection~\ref{sec:markov} below.
For completeness we also provide the short proof of the Markov inequality in Lemma~\ref{lem:markov}.
Results related to Lemma~\ref{lem:Markov2} and Proposition~\ref{prop:markov3} can, e.g., be found in Grohs et al.~\cite[Subsection~3.1]{GrohsWurstemberger2018}.
In particular, Lemma~\ref{lem:Markov2} is somehow an elementary extension of~\cite[Proposition~3.3 in Subsection~3.1]{GrohsWurstemberger2018}.

\subsection{Markov-type estimates}
\label{sec:markov}

\begin{lemma}[Markov inequality]
	\label{lem:markov}
	Let 
	$ \left( \Omega, \mathcal{F}, \mu \right) $
	be a measure space,
	let $ \varepsilon \in (0,\infty) $, 
	and let 
	$ X \colon \Omega \to [0,\infty] $ 
	be an $ \mathcal{F} $/$ \mathcal{B}( [0,\infty] ) $-measurable
	function. Then
	\begin{equation}
	\label{eq:markov}
	\mu\big(
	X \geq \varepsilon
	\big)
	\leq
	\frac{
		\int_{ \Omega } X \, d\mu
	}{
		\varepsilon
	}
	.
	\end{equation}
\end{lemma}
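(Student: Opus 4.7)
The plan is to give the standard one-line argument for the Markov inequality via a pointwise lower bound on the integrand. First I would introduce the set $A = \{\omega \in \Omega : X(\omega) \geq \varepsilon\} \in \mathcal{F}$, whose measurability follows from the $\mathcal{F}$/$\mathcal{B}([0,\infty])$-measurability of $X$ together with the fact that $[\varepsilon,\infty] \in \mathcal{B}([0,\infty])$. The quantity $\mu(X \geq \varepsilon)$ on the left-hand side of \eqref{eq:markov} is by definition $\mu(A)$.

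Next I would establish the pointwise inequality $\varepsilon \, \mathbbm{1}_A(\omega) \leq X(\omega)$ for every $\omega \in \Omega$. On $A$ this holds because $X(\omega) \geq \varepsilon = \varepsilon \cdot 1$, and on $\Omega \setminus A$ the left-hand side is $0$ while $X(\omega) \in [0,\infty]$. Monotonicity of the integral with respect to non-negative measurable functions then yields
\begin{equation*}
\varepsilon \, \mu(A)
= \int_{\Omega} \varepsilon \, \mathbbm{1}_A \, d\mu
\leq \int_{\Omega} X \, d\mu .
\end{equation*}

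Dividing by $\varepsilon \in (0,\infty)$ gives $\mu(X \geq \varepsilon) = \mu(A) \leq \varepsilon^{-1} \int_{\Omega} X \, d\mu$, which is precisely \eqref{eq:markov}. There is no real obstacle here: the only points requiring minor care are the measurability of $A$ (so that $\mu(A)$ is defined) and the convention that the inequality and the integral make sense with values in $[0,\infty]$, both of which are handled by the hypothesis that $X$ takes values in $[0,\infty]$ and is $\mathcal{F}$/$\mathcal{B}([0,\infty])$-measurable.
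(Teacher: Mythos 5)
Your proof is correct and is essentially the same as the paper's: both rest on the pointwise bound $\varepsilon\,\mathbbm{1}_{\{X \geq \varepsilon\}} \leq X$ followed by integration (the paper merely divides by $\varepsilon$ before integrating rather than after). The extra remarks on measurability of the event and the $[0,\infty]$-valued conventions are fine but not needed beyond what the paper already assumes.
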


\begin{proof}[Proof
	of Lemma~\ref{lem:markov}]
	Note that the fact that $ X \geq 0 $ proves that
	\begin{equation}
	\label{eq:markov_use}
	\mathbbm{1}_{
		\{ X \geq \varepsilon \}
	}
	=
	\frac{
		\varepsilon 
		\cdot
		\mathbbm{1}_{
			\{ X \geq \varepsilon \}
		}
	}{
		\varepsilon
	}
	\leq
	\frac{
		X 
		\cdot
		\mathbbm{1}_{
			\{ X \geq \varepsilon \}
		}
	}{
		\varepsilon
	}
	\leq
	\frac{
		X
	}{
		\varepsilon
	}.
	\end{equation}
	Integration with respect to $ \mu $ hence
	establishes \eqref{eq:markov}.
	The proof of Lemma~\ref{lem:markov} is thus completed.
\end{proof}

\begin{lemma}
	\label{lem:Markov2}
	Let $ ( \Omega, \mathcal{F}, \P ) $ be a probability space, 
	let $ X \colon \Omega \to [-\infty,\infty] $ be a random variable, 
	and let $ \varepsilon, q \in (0,\infty) $.
	Then
	\begin{equation}
	\left[ 
	\P\big(
	|X| \geq \varepsilon
	\big)
	\right]^{ \nicefrac{ 1 }{ q } }
	\leq
	\frac{ 
		\big(
		\E\big[
		| X |^q
		\big]
		\big)^{ 1 / q }
	}{
		\varepsilon
	}
	.
	\end{equation}
\end{lemma}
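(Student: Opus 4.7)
The plan is to reduce Lemma~\ref{lem:Markov2} directly to the Markov inequality in Lemma~\ref{lem:markov} by a monotone transformation. Since $q \in (0,\infty)$ and $|X|$ takes values in $[0,\infty]$, the map $t \mapsto t^q$ is strictly increasing on $[0,\infty]$, so the events $\{|X| \geq \varepsilon\}$ and $\{|X|^q \geq \varepsilon^q\}$ coincide. This is the only substantive observation; the rest is bookkeeping.

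First I would verify measurability: $|X| \colon \Omega \to [0,\infty]$ is $\mathcal{F}$/$\mathcal{B}([0,\infty])$-measurable as the composition of $X$ with the absolute value (suitably extended to $[-\infty,\infty]$), and hence so is $|X|^q$. Then I would apply Lemma~\ref{lem:markov} to the measure space $(\Omega, \mathcal{F}, \mathbb{P})$ with the nonnegative measurable function $|X|^q$ and the threshold $\varepsilon^q \in (0,\infty)$, which yields
\begin{equation*}
\mathbb{P}\big(|X|^q \geq \varepsilon^q\big) \leq \frac{\mathbb{E}\big[|X|^q\big]}{\varepsilon^q}.
\end{equation*}
Using the equality of events noted above, this becomes
\begin{equation*}
\mathbb{P}\big(|X| \geq \varepsilon\big) \leq \frac{\mathbb{E}\big[|X|^q\big]}{\varepsilon^q}.
\end{equation*}
Finally I would raise both sides to the power $\nicefrac{1}{q}$, which preserves the inequality since both sides lie in $[0,\infty]$ and $t \mapsto t^{1/q}$ is nondecreasing there, to obtain the claimed bound.

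There is no real obstacle here: the statement is a standard corollary of the Markov inequality, and the only care required is to handle the fact that $X$ is extended-real-valued (so that $|X|^q$ may equal $+\infty$ on a set of positive measure, in which case the right-hand side is $+\infty$ and the bound is trivial). In the remaining case $\mathbb{E}[|X|^q] < \infty$, the two-line argument above goes through verbatim.
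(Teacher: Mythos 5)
Your proposal is correct and follows essentially the same route as the paper: the paper's proof also rewrites $\P(|X|\geq\varepsilon)$ as $\P(|X|^q\geq\varepsilon^q)$, applies Lemma~\ref{lem:markov} to $|X|^q$ with threshold $\varepsilon^q$, and then takes the $\nicefrac{1}{q}$-th power. Your additional remarks on measurability and on the case $\E[|X|^q]=\infty$ are harmless elaborations of the same argument.
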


\begin{proof}[Proof of Lemma~\ref{lem:Markov2}]
	Observe that Lemma~\ref{lem:markov} ensures that
	\begin{equation}
	\begin{split}
	\left[ 
	\P\!\left(
	|X| \geq \varepsilon
	\right)
	\right]^{ \nicefrac{ 1 }{ q } }
	& =
	\left[ 
	\P\!\left(
	|X|^q \geq \varepsilon^q
	\right)
	\right]^{ \nicefrac{ 1 }{ q } }
	\leq
	\left[ 
	\frac{ 
		\E\big[
		|X|^q
		\big]
	}{
		\varepsilon^q
	}
	\right]^{ \nicefrac{ 1 }{ q } }
	=
	\frac{ 
		\big(
		\E\big[
		| X |^q
		\big]
		\big)^{ 1 / q }
	}{
		\varepsilon
	}
	.
	\end{split}
	\end{equation}
	The proof of Lemma~\ref{lem:Markov2} is thus completed.
\end{proof}

\subsection{Existence of a realization with the desired approximation properties}
\label{sec:existence_realization}

\begin{prop}
	\label{prop:markov3}
	Let $ \varepsilon \in (0,\infty) $,
	let $ ( \Omega, \mathcal{F}, \P ) $ be a probability space, 
	and
	let $ X \colon \Omega \to [-\infty,\infty] $ be a random variable which satisfies that 
	\begin{equation}
	\inf\nolimits_{ q \in (0,\infty) }
	\big(
	\E\big[
	| X |^q
	\big]
	\big)^{ 1 / q }
	< \varepsilon
	.
	\end{equation}
	Then there exists $ \omega \in \Omega $ such that
	$
	| X( \omega ) | < \varepsilon
	$.
\end{prop}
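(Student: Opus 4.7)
The plan is to deduce the existence of such an $\omega$ from a probabilistic pigeonhole argument: if the event $\{|X| \geq \varepsilon\}$ had full probability $1$, then the $L^q$-norm of $X$ would be bounded below by $\varepsilon$ for every $q \in (0,\infty)$, contradicting the infimum hypothesis. The quantitative form of this dichotomy is exactly Lemma~\ref{lem:Markov2}, so the argument essentially writes itself.

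Concretely, first I would unpack the infimum hypothesis: since $\inf_{q \in (0,\infty)} (\E[|X|^q])^{1/q} < \varepsilon$, by the definition of the infimum there exists some specific $q \in (0,\infty)$ with $(\E[|X|^q])^{1/q} < \varepsilon$. Next, I would apply Lemma~\ref{lem:Markov2} with this choice of $q$ to obtain
\begin{equation*}
\bigl[ \P(|X| \geq \varepsilon) \bigr]^{1/q}
\leq
\frac{(\E[|X|^q])^{1/q}}{\varepsilon}
<
1,
\end{equation*}
which gives $\P(|X| \geq \varepsilon) < 1$, hence $\P(|X| < \varepsilon) > 0$. Finally, since a set of positive probability must be non-empty, there exists $\omega \in \Omega$ with $|X(\omega)| < \varepsilon$, as required.

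There is essentially no obstacle here: the proposition is a near-immediate corollary of Lemma~\ref{lem:Markov2}, and the only thing to be mildly careful about is that $X$ takes values in the extended reals $[-\infty,\infty]$, but this causes no issue because $\{|X| \geq \varepsilon\}$ and its complement still partition $\Omega$ into measurable sets and the Markov estimate is stated for exactly this setting.
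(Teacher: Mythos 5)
Your argument is correct and follows exactly the paper's own route: select a specific $q$ witnessing the infimum hypothesis, apply Lemma~\ref{lem:Markov2} to conclude $\P(|X| \geq \varepsilon) < 1$, and deduce that the complementary event has positive probability and is therefore non-empty. No gaps; the remark about the extended-real-valued $X$ is a fine precaution but, as you note, immaterial.
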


\begin{proof}[Proof of Proposition~\ref{prop:markov3}] 
	First, observe that 
	Lemma~\ref{lem:Markov2} assures that for all $ q \in (0,\infty) $ it holds that
	\begin{equation}
	\label{eq:consequence_Markov}
	\left[ 
	\P\big(
	|X| \geq \varepsilon
	\big)
	\right]^{ \nicefrac{ 1 }{ q } }
	\leq
	\frac{ 
		\big(
		\E\big[
		| X |^q
		\big]
		\big)^{ 1 / q }
	}{
		\varepsilon
	}
	.
	\end{equation}
	Next note that the hypothesis that 
	$
	\inf\nolimits_{ q \in (0,\infty) }
	\big(
	\E\big[
	| X |^q
	\big]
	\big)^{ 1 / q }
	< \varepsilon
	$
	demonstrates that there exists $ q \in (0,\infty) $ 
	such that
	\begin{equation}
	\big(
	\E\big[
	| X |^q
	\big]
	\big)^{ 1 / q }
	< \varepsilon
	.
	\end{equation}
	Combining this with \eqref{eq:consequence_Markov} 
	proves that
	\begin{equation}
	\left[ 
	\P\big(
	|X| \geq \varepsilon
	\big)
	\right]^{ \nicefrac{ 1 }{ q } }
	< 1
	.
	\end{equation}
	Hence, we obtain that
	\begin{equation}
	\P\big(
	|X| \geq \varepsilon
	\big)
	< 1
	.
	\end{equation}
	This shows that
	\begin{equation}
	\P\big(
	|X| < \varepsilon
	\big)
	=
	1 - 
	\P\big(
	|X| \geq \varepsilon
	\big)
	> 0
	.
	\end{equation}
	Therefore, we obtain that 
	\begin{equation}
	\{ 
	|X| < \varepsilon
	\}
	=
	\big\{ 
	\omega \in \Omega 
	\colon
	| X( \omega ) | < \varepsilon
	\big\} 
	\neq \emptyset
	.
	\end{equation}
	The proof of Proposition~\ref{prop:markov3} is thus completed.
\end{proof}


\begin{cor}[Existence of approximating realizations of a random field]
		\label{cor:random_field}
		Let $ d \in \N $, $ p, \varepsilon \in (0,\infty) $, 
		let $ u \colon \R^d \to \R $ be 
		$ \mathcal{B}( \R^d ) $/$ \mathcal{B}( \R ) $-measurable, 
		let $ ( \Omega, \mathcal{F}, \P ) $ be a probability space, let $\nu \colon \mathcal{B}(\R^d) \to [0,1]$ be a probability measure on $\R^d$, 
		let $ X \colon \R^d \times \Omega \to \R $ be 
		$ ( \mathcal{B}( \R^d ) \otimes \mathcal{F} ) $/$ \mathcal{B}( \R ) $-measurable, 
		and assume that
		\begin{equation}
		\label{eq:assumption_random_field}
		\left[ 
		\int_{ \R^d } \E\big[ | u(x) - X(x) |^p \big] \,  \nu(dx) 
		\right]^{ \nicefrac{ 1 }{ p } } < \varepsilon
		.
		\end{equation}
		Then there exists $ \omega \in \Omega $ such that
		\begin{equation}
		\label{eq:random_field_to_show}
		\left[ 
		\int_{ \R^d } 
		\left| u(x) - X(x,\omega) \right|^p 
		\nu(dx) \right]^{ 
			\nicefrac{ 1 }{ p }
		} 
		< \varepsilon .
		\end{equation}
	\end{cor}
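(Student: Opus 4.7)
The plan is to reduce Corollary~\ref{cor:random_field} to Proposition~\ref{prop:markov3} by integrating out the spatial variable $x$ and treating the resulting $L^p(\nu)$-norm as a single nonnegative random variable. Concretely, I would define $Y \colon \Omega \to [0,\infty]$ by
\begin{equation}
Y(\omega) = \left[ \int_{\R^d} \left| u(x) - X(x,\omega) \right|^p \nu(dx) \right]^{\nicefrac{1}{p}} .
\end{equation}
Since the integrand $(x,\omega) \mapsto |u(x) - X(x,\omega)|^p$ is $(\mathcal{B}(\R^d) \otimes \mathcal{F})$/$\mathcal{B}([0,\infty])$-measurable by the hypotheses on $u$ and $X$, an application of Tonelli's theorem guarantees that $Y$ is $\mathcal{F}$/$\mathcal{B}([0,\infty])$-measurable and thus qualifies as a random variable in the sense required by Proposition~\ref{prop:markov3}.

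Next I would compute the $p$-th moment of $Y$. Tonelli's theorem and the hypothesis~\eqref{eq:assumption_random_field} yield
\begin{equation}
\E\!\left[ |Y|^p \right] = \int_{\R^d} \E\!\left[ |u(x) - X(x)|^p \right] \nu(dx) < \varepsilon^p ,
\end{equation}
so that $(\E[|Y|^p])^{1/p} < \varepsilon$. In particular, choosing $q = p$ in the infimum gives
\begin{equation}
\inf\nolimits_{ q \in (0,\infty) } \big( \E\big[ | Y |^q \big] \big)^{ 1 / q } \leq \big( \E\big[ | Y |^p \big] \big)^{ 1 / p } < \varepsilon .
\end{equation}

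Proposition~\ref{prop:markov3} applied to $Y$ then furnishes some $\omega \in \Omega$ with $|Y(\omega)| < \varepsilon$. Since $Y \geq 0$ by construction, this is exactly the desired estimate~\eqref{eq:random_field_to_show}. I do not anticipate any genuine obstacle here: the only points to be careful about are invoking Tonelli (which is legitimate because the integrand is nonnegative, without any integrability assumption) and verifying joint measurability of the integrand, both of which are immediate from the stated hypotheses. The content of the corollary is really just that a nonnegative random variable with small $L^p$-norm must be less than $\varepsilon$ on a set of positive probability, a fact packaged abstractly in Proposition~\ref{prop:markov3}.
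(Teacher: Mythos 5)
Your proposal is correct and follows essentially the same route as the paper: define $Y(\omega)=[\int_{\R^d}|u(x)-X(x,\omega)|^p\,\nu(dx)]^{1/p}$, interchange the order of integration to identify $(\E[|Y|^p])^{1/p}$ with the quantity in \eqref{eq:assumption_random_field}, and apply Proposition~\ref{prop:markov3}. The only cosmetic difference is that you invoke Tonelli's theorem where the paper cites Fubini; since the integrand is nonnegative this is the same step (and arguably the cleaner way to phrase it).
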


\begin{proof}[Proof of Corollary~\ref{cor:random_field}]
	Throughout this proof let $ Y \colon \Omega \to [-\infty,\infty] $ be the random variable 
	given by
	\begin{equation}
	\label{eq:random_field_defY}
	Y = 
	\left[ 
	\int_{ \R^d } 
	\left| u(x) - X(x) \right|^p 
	\nu(dx) 
	\right]^{ 
		\nicefrac{ 1 }{ p }
	} 
	.
	\end{equation}
	Observe that Fubini's theorem and \eqref{eq:assumption_random_field} ensure that
	\begin{equation}
	\begin{split}
	\big( \E\big[ | Y |^p \big] \big)^{ \nicefrac{ 1 }{ p } }
	& =
	\left(
	\E\!\left[ 
	\int_{ \R^d } 
	\left| u(x) - X(x) \right|^p 
	\nu(dx) 
	\right]
	\right)^{ \! \nicefrac{ 1 }{ p } }\\
& =
	\left(
	\int_{ \R^d } 
	\E\big[ 
	| u(x) - X(x) |^p 
	\big]
	\,
	\nu(dx) 
	\right)^{ \! \nicefrac{ 1 }{ p } }
	< \varepsilon .
	\end{split}
	\end{equation}
	Hence, we obtain that 
	\begin{equation}
	\inf_{ q \in (0,\infty) } 
	\big( \E\big[ | Y |^q \big] \big)^{ \nicefrac{ 1 }{ q } }
	\leq 
	\big( \E\big[ | Y |^p \big] \big)^{ \nicefrac{ 1 }{ p } }
	< 
	\varepsilon .
	\end{equation}
	This allows us to apply Proposition~\ref{prop:markov3} to obtain that 
	there exists $ \omega \in \Omega $ such that
	\begin{equation}
	| Y(\omega) | < \varepsilon .
	\end{equation}
	Combining this with \eqref{eq:random_field_defY} establishes 
	\eqref{eq:random_field_to_show}. 
	The proof of Corollary~\ref{cor:random_field} is thus completed.
\end{proof}

\section{The Feynman-Kac formula revisited}
\label{sec:feynman:kac}

Theorem~\ref{thm:PDE_approx_Lp} in Subsection~\ref{sec:main_result} below
(the main result of this article)
and Theorem~\ref{thm:intro} in the introduction, respectively, are, as mentioned above, purely deterministic approximation results for DNNs and a class of deterministic PDEs.
In contrast, our proofs of Theorem~\ref{thm:PDE_approx_Lp} and Theorem~\ref{thm:intro}, respectively,
are based on a probabilistic argument on a suitable artificial probability space on which we, roughly speaking, design random DNNs.
Our construction of the random DNNs is based on suitable Monte Carlo approximations of the solutions of the considered deterministic PDEs.
These suitable Monte Carlo approximations, in turn, are based on the link between deterministic Kolmogorov PDEs
and solutions of SDEs which is provided by the famous Feynman-Kac formula.
In this section we recall in Theorem~\ref{thm:feynman} below a special case of this famous formula (cf., e.g., Hairer et al.~\cite[Subsection~4.4]{HairerHutzenthalerJentzen2015}).
Theorem~\ref{thm:feynman} below will be used in our proof of Theorem~\ref{thm:PDE_approx_Lp} below
(cf.\ \eqref{eq:X_processes} and \eqref{eq:apply_feynman}
in the proof of Proposition~\ref{prop:PDE_approx_Lp},
Proposition~\ref{prop:PDE_approx_Lp}, Corollary~\ref{cor:PDE_approx_Lp_gen},
and
Theorem~\ref{thm:PDE_approx_Lp}).

\begin{theorem}
\label{thm:feynman}
Let $ ( \Omega, \mathcal{F}, \P ) $ be a probability space, 
let $ T \in (0,\infty) $, $ d, m \in \N $, $ B \in \R^{ d \times m } $, 
let $ W \colon [0,T] \times \Omega \to \R^m $ be a standard Brownian motion, 
let $ \left\| \cdot \right\| \colon \R^d \to [0,\infty) $
be the $ d $-dimensional Euclidean norm, 
let $ \left< \cdot , \cdot \right> \colon \R^d \times \R^d \to \R $
be the $ d $-dimensional Euclidean scalar product, 
let $ \varphi \colon \R^d \to \R $ be a continuous function, 
let $ \mu \colon \R^d \to \R^d $ be a locally Lipschitz continuous function, 
and assume that
\begin{equation}
\inf_{ p \in (0,\infty) }
\sup_{ x \in \R^d }
\left[
\frac{ | \varphi(x) | }{
(
1 + \| x \|^p
)
}
+
\frac{
\left\| \mu(x) \right\|
}{
(
1 + \| x \|
)
}
\right]
< \infty
.
\end{equation}
Then 
\begin{enumerate}[(i)]
\item 
there exist unique stochastic processes
$ X^x \colon [0,T] \times \Omega \to \R^d $, $ x \in \R^d $, 
with continuous sample paths 
which satisfy for all $ x \in \R^d $, $ t \in [0,T] $ that
\begin{equation}
X^x_t = x + \int_0^t \mu( X^x_s ) \, ds + B W_t 
,
\end{equation}
\item 
there exists a unique function
$
u \colon [0,T] 
\times \R^d \to \R
$ 
such that for all $ x \in \R^d $ it holds that
$
u( 0, x ) = \varphi(x)
$,
such that 
$
\inf_{ p \in (0,\infty) }
\sup_{ (t,x) \in [0,T] \times \R^d }
\frac{ | u(t,x) | }{
1 + \| x \|^p
}
< \infty
$,
and such that $ u $ is a viscosity solution of 
\begin{equation}
\label{eq:CD.F4}
( \tfrac{ \partial }{ \partial t } 
u )(t,x)
=
\big\langle 
(\nabla_x u)(t,x),
\mu(x) 
\big\rangle 
+
\tfrac{1}{2}
\operatorname{Trace}\!\big(
B 
B^{*}       
(\textup{Hess}_x u)(t,x) 
\big)
\end{equation}
for $ (t,x) \in (0,T) \times \R^d $, and
\item 
it holds for all $ t \in [0,T] $, $ x \in \R^d $ that 
$
\E\big[ 
| \varphi( X^x_t ) | 
\big] < \infty
$
and
\begin{equation}
u(t,x) = \E\big[ \varphi( X^x_t ) \big]
.
\end{equation}
\end{enumerate}
\end{theorem}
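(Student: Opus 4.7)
The plan is to treat the three assertions in the natural order (i), (iii), (ii), since the function in (ii) will be constructed out of the SDE from (i) via the expectation formula in (iii).

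For (i), I would invoke the standard strong existence and uniqueness theorem for SDEs with locally Lipschitz drift and constant (hence Lipschitz) diffusion. The assumption that $\inf_{p}\sup_x \|\mu(x)\|/(1+\|x\|) < \infty$ supplies a global linear growth bound $\|\mu(x)\|\leq c(1+\|x\|)$, which combined with local Lipschitz continuity produces a unique, continuous, pathwise solution $X^x$. A Gronwall argument applied to $\E[\sup_{s\leq t}\|X^x_s\|^q]$ then yields moment bounds of the form $\sup_{t\in[0,T]}\E[\|X^x_t\|^q]\leq C_{q,T}(1+\|x\|^q)$ for every $q\in[1,\infty)$. Standard $L^q$-stability with respect to the initial condition moreover gives continuity of $(t,x)\mapsto X^x_t$ in $L^q(\P)$.

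For (iii), I would define $u(t,x):=\E[\varphi(X^x_t)]$. The hypothesis that $|\varphi(x)|\leq c(1+\|x\|^p)$ for some $p\in(0,\infty)$, combined with the moment bounds from the previous paragraph, makes this well-defined, yields $|u(t,x)|\leq C(1+\|x\|^p)$, and (via uniform integrability plus the $L^q$-continuity of the flow) shows that $u$ is continuous. In particular $u(0,x)=\varphi(x)$ by continuity of $\varphi$ and continuity of $x\mapsto X^x_0=x$.

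The main step, and the hardest, is (ii): showing that $u$ is a viscosity solution of \eqref{eq:CD.F4}. My approach is a mollification/approximation argument. I would pick sequences $\mu_n\in C^\infty$ that are globally Lipschitz and converge to $\mu$ locally uniformly while satisfying a uniform linear growth bound $\|\mu_n(x)\|\leq c(1+\|x\|)$, and $\varphi_n\in C^\infty$ converging locally uniformly to $\varphi$ with a uniform polynomial growth bound. For each $n$, the associated SDE has $C^{1,2}$-smooth Kolmogorov semigroup, so $u_n(t,x):=\E[\varphi_n(X^{n,x}_t)]$ is a classical solution of the PDE with drift $\mu_n$ and diffusion matrix $BB^*$ (an application of It{\^o}'s formula to a smoothed Kolmogorov backward equation), hence in particular a viscosity solution. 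Using uniform moment bounds (uniform in $n$) together with $L^q$-stability of the SDE under perturbation of the drift, I would upgrade to locally uniform convergence $u_n\to u$; the standard stability theorem for viscosity solutions under locally uniform convergence of both the solutions and the Hamiltonians then transfers the viscosity sub- and super-solution inequalities to $u$. Uniqueness of $u$ within the class of polynomially growing continuous viscosity solutions finally follows from a comparison principle for parabolic PDEs with at most linearly growing drift and constant diffusion (a Crandall--Ishii--Lions-type result, using a weight of the form $e^{-\lambda(1+\|x\|^2)^{1/2}}$ to handle the unbounded domain).

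The genuinely delicate point is the transfer of the viscosity property from the $u_n$ to $u$: it requires (a) constructing the approximations $\mu_n,\varphi_n$ so that the associated moment bounds are uniform in $n$, (b) establishing locally uniform convergence $u_n\to u$ (not merely pointwise), and (c) citing or reproving the viscosity-stability theorem in a form that accommodates the $x$-dependent Hamiltonian here. Given the reference to Hairer--Hutzenthaler--Jentzen~\cite{HairerHutzenthalerJentzen2015} in the excerpt, I would quote their version of this program rather than redoing it from scratch.
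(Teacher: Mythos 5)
The paper does not prove Theorem~\ref{thm:feynman} at all: it is recalled as a known special case of the Feynman--Kac formula and is justified solely by the citation to Hairer et al.~\cite[Subsection~4.4]{HairerHutzenthalerJentzen2015}. Your closing decision to quote that reference rather than redo the argument is therefore exactly what the paper does, and at that level your proposal is consistent with the paper's treatment.

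Concerning the sketch itself, the outline (strong well-posedness of the SDE under local Lipschitz continuity plus linear growth, moment bounds via Gronwall, defining $u(t,x)=\E[\varphi(X^x_t)]$, approximating the coefficients, transferring the viscosity property by stability, and uniqueness via a comparison principle in the class of polynomially growing functions) is the standard program and essentially what the cited reference carries out. One step is overstated as written: only $B\in\R^{d\times m}$ is assumed, so $B B^{*}$ may be degenerate (even zero), and then smoothness together with global Lipschitz continuity of a mollified drift $\mu_n$ does \emph{not} by itself yield a $C^{1,2}$ Kolmogorov semigroup --- the cited Hairer--Hutzenthaler--Jentzen paper is precisely about the failure of such regularity for degenerate equations with smooth coefficients. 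To make that step rigorous you must construct $\mu_n$ and $\varphi_n$ so that the higher derivatives are controlled (e.g., truncation plus mollification giving bounded, or at least polynomially bounded, derivatives of every order used), so that the first and second variation processes have the moments needed to differentiate under the expectation and to identify $\partial_t u_n$ with the generator applied to $u_n(t,\cdot)$; alternatively one can regularize the noise by adding $\varepsilon I$ and let $\varepsilon\to 0$. Likewise, the comparison principle for degenerate parabolic equations within polynomially growing functions is nontrivial and should be taken from (or reproved as in) the same reference. These are exactly the ingredients that Subsection~4.4 of~\cite{HairerHutzenthalerJentzen2015} supplies, so with those caveats your plan is sound.
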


\section{Stochastic differential equations (SDEs)}
\label{sec:sdes}

In our proofs of Theorem~\ref{thm:intro} above
and Theorem~\ref{thm:PDE_approx_Lp} below (the main result of this article), 
respectively, we design and analyse (cf.\ items~\eqref{item:I}--\eqref{item:II} 
in Section~\ref{sec:intro} above) a suitable random DNN. 
The construction of this suitable random DNN is based on 
Euler-Maruyama discretizations of solutions of the SDEs 
associated to the Kolmogorov PDEs in~\eqref{eq:PDE_intro} 
and for our error analysis of this suitable random DNN 
we employ appropriate weak error estimates 
for Euler-Maruyama discretizations of solutions of SDEs. 
These weak error estimates are established in Lemma~\ref{lem:perturbation_PDE} and Proposition~\ref{prop:perturbation_PDE_2}
in Subsection~\ref{sec:weak_perturbation} below.
Our proofs of Lemma~\ref{lem:perturbation_PDE} and Proposition~\ref{prop:perturbation_PDE_2}, respectively, use
suitable strong error estimates for Euler-Maruyama discretizations. 
These strong error estimates are
the subject of Proposition~\ref{prop:perturbation_SDE} in Subsection~\ref{sec:strong_perturbation} below. 
Proposition~\ref{prop:perturbation_SDE} follows from an application of the deterministic perturbation-type 
inequality in Lemma~\ref{lem:per:pathwise} in Subsection~\ref{sec:strong_perturbation} below. 
Perturbation estimates which are related to Lemma~\ref{lem:per:pathwise} and Proposition~\ref{prop:perturbation_SDE}
can, e.g., be found 
in Hutzenthaler et al.~\cite[Proposition~2.9 and Corollary~2.12]{HutzenthalerJentzen2014}.
In particular, our proof of Lemma~\ref{lem:per:pathwise} is inspired by the proof 
of Proposition~2.9 in Hutzenthaler et al.~\cite{HutzenthalerJentzen2014}. 
Furthermore, our proof of Proposition~\ref{prop:perturbation_PDE_2} employs the elementary a~priori estimate 
in Lemma~\ref{lem:sde-lp-bound} in Subsection~\ref{sec:a_priori_SDEs} below. 
Lemma~\ref{lem:sde-lp-bound}, in turn, is a straightforward consequence of Gronwall's integral 
inequality (see, e.g., Grohs et al.~\cite[Lemma~2.11]{GrohsWurstemberger2018})
and its proof is therefore omitted. 
In our proof of Theorem~\ref{thm:PDE_approx_Lp}
we will also employ the elementary a~priori estimate 
for standard Brownian motions in Lemma~\ref{l:exp.Gauss} in Subsection~\ref{sec:a_priori_BMs} below. 
Lemma~\ref{l:exp.Gauss} is a straightforward consequence of It\^o's formula and its proof is therefore also omitted.

\subsection{A priori bounds for SDEs}
\label{sec:a_priori_SDEs}

\begin{lemma}
	\label{lem:sde-lp-bound}
	Let $ d, m \in \N $, $ \xi \in \R^d $, $ p \in [1,\infty) $, 
	$ c, C, T \in [0,\infty) $, 
	$ B \in \R^{ d \times m } $,  
	let 
	$
	\left \| \cdot \right \| \colon \R^d \to [0,\infty)
	$ be the $d$-dimensional Euclidean norm,
	let $ (\Omega, \mathcal{F},\P) $ be a probability space, 
	let $W\colon [0,T]\times \Omega \to \R^m$ be a standard Brownian motion,
	let $\mu \colon \R^d \to \R^d$
	be a $\mathcal{B}(\R^d) / \mathcal{B}(\R^d)$-measurable function which
	satisfies for all $x \in \R^d$ that
	$\|\mu(x)\| \leq C+c\|x\|$,
	let $\chi \colon  [0,T] \to [0,T]$ be a $\mathcal{B}([0,T]) /
	\mathcal{B} ([0,T])$-measurable function which satisfies for all $t \in
	[0,T]$ that $\chi(t) \leq t$,
	and let $X  \colon [0,T]\times \Omega \to \R^d$ be a stochastic
	process with continuous sample paths which satisfies for all 
	$ t \in [0,T] $ that
	\begin{equation}
	\label{eq:apriori1-ass1}
	\P\!\left(X_t = \xi + \int_0^t \mu\!\left(X_{\chi(s)}\right) ds +
	BW_t\right) = 1.
	\end{equation}
	Then it holds that
	\begin{equation}
	\label{eq:lemma:apriori}
	\begin{split}
	\sup_{t \in [0, T]} \big(\E\! \left[\|X_t\|^p\right]\big)^{\nicefrac{1}{p}} 
	& \leq
	\Big(
	\|\xi\| + C T 
	+ 
	\big(
	\E\big[ 
	\| B W_T \|^p
	\big]
	\big)^{ \nicefrac{ 1 }{ p } }
	\Big)
	\,
	e^{ c T } .
	\end{split}
	\end{equation}
\end{lemma}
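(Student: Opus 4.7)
The plan is to turn the integral equation into an integral inequality for $\bigl(\E[\|X_t\|^p]\bigr)^{1/p}$ and then apply Gronwall. First I would start from~\eqref{eq:apriori1-ass1}, take Euclidean norms, use the triangle inequality, and substitute the linear growth bound on $\mu$ to obtain, almost surely,
\begin{equation*}
\|X_t\| \leq \|\xi\| + Ct + c\int_0^t \|X_{\chi(s)}\|\,ds + \|BW_t\|.
\end{equation*}
Taking $L^p(\P)$-norms and using Minkowski's inequality (together with Minkowski's inequality for integrals to pull $(\E[\,\cdot\,]^p)^{1/p}$ inside the time integral) then yields
\begin{equation*}
\bigl(\E[\|X_t\|^p]\bigr)^{1/p} \leq \|\xi\| + Ct + c\int_0^t \bigl(\E[\|X_{\chi(s)}\|^p]\bigr)^{1/p}\,ds + \bigl(\E[\|BW_t\|^p]\bigr)^{1/p}.
\end{equation*}

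Next I would bound the Brownian term by its terminal value: since the components of $BW_t$ are centered Gaussians with covariance $tBB^{*}$, scaling shows $\bigl(\E[\|BW_t\|^p]\bigr)^{1/p}=(t/T)^{1/2}\bigl(\E[\|BW_T\|^p]\bigr)^{1/p}\leq \bigl(\E[\|BW_T\|^p]\bigr)^{1/p}$ for $t\in[0,T]$. Introducing $g(t)=\sup_{s\in[0,t]}\bigl(\E[\|X_s\|^p]\bigr)^{1/p}$ and using that $\chi(s)\leq s$ forces $\bigl(\E[\|X_{\chi(s)}\|^p]\bigr)^{1/p}\leq g(s)$, the previous display gives the closed inequality
\begin{equation*}
g(t)\leq \|\xi\|+CT+\bigl(\E[\|BW_T\|^p]\bigr)^{1/p}+c\int_0^t g(s)\,ds,\qquad t\in[0,T].
\end{equation*}
Applying Gronwall's integral inequality (in the form already referenced, e.g.\ \cite[Lemma~2.11]{GrohsWurstemberger2018}) then delivers the asserted bound $g(T)\leq \bigl(\|\xi\|+CT+(\E[\|BW_T\|^p])^{1/p}\bigr)e^{cT}$.

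The one genuine obstacle is that Gronwall requires $g$ to be finite (and ideally locally bounded or measurable) before it can be applied: a priori we only know that $X$ has continuous paths, not that $\E[\|X_t\|^p]<\infty$. I would remove this obstruction by a standard localization. Define the stopping times $\tau_n=\inf\{t\in[0,T]\colon \|X_t\|\geq n\}\wedge T$ for $n\in\N$; since $X$ has continuous paths, $\tau_n\uparrow T$ almost surely and $\sup_{t\leq \tau_n}\|X_t\|\leq n+\|X_{\tau_n}\|$ is bounded in a way that makes $g_n(t):=\sup_{s\in[0,t]}\bigl(\E[\|X_{s\wedge\tau_n}\|^p]\bigr)^{1/p}$ finite. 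Running the argument above for the stopped process (noting that $BW_{t\wedge\tau_n}$ is dominated in $L^p$ by $\sup_{s\leq T}\|BW_s\|$, whose $L^p$-norm is controlled by $(\E[\|BW_T\|^p])^{1/p}$ up to an absolute constant via Doob's maximal inequality) yields the same estimate for $g_n(T)$, uniformly in $n$. Passing $n\to\infty$ and invoking Fatou's lemma then gives the desired bound for $g(T)$, completing the proof. The bookkeeping of the localization is the only subtle point; the inequality itself is a direct Minkowski-plus-Gronwall computation.
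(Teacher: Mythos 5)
Your main computation is exactly the route the paper intends: the paper omits the proof of this lemma, describing it as a straightforward consequence of Gronwall's integral inequality, and your chain (take norms in \eqref{eq:apriori1-ass1}, insert the linear growth bound, apply Minkowski's inequality and Minkowski's integral inequality, bound $\big(\E[\|BW_t\|^p]\big)^{\nicefrac{1}{p}}=(t/T)^{\nicefrac{1}{2}}\big(\E[\|BW_T\|^p]\big)^{\nicefrac{1}{p}}\leq\big(\E[\|BW_T\|^p]\big)^{\nicefrac{1}{p}}$ by Gaussian scaling, use $\chi(s)\leq s$ to close the inequality in $g(t)=\sup_{s\in[0,t]}\big(\E[\|X_s\|^p]\big)^{\nicefrac{1}{p}}$, and apply Gronwall) is precisely that argument. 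You are also right that the only genuine issue is the a~priori finiteness of $g$, which the paper's one-line justification glosses over.

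However, the localization step as you wrote it does not deliver the constant claimed in \eqref{eq:lemma:apriori}. Controlling $\|BW_{t\wedge\tau_n}\|$ by $\sup_{s\leq T}\|BW_s\|$ and invoking Doob's maximal inequality costs a factor (of order $p/(p-1)$), so the estimate for $g_n(T)$ is \emph{not} ``the same estimate'', and letting $n\to\infty$ via Fatou then yields the lemma only up to that factor; moreover Doob's $L^p$ maximal inequality is unavailable at $p=1$, which the hypothesis $p\in[1,\infty)$ permits. Two easy repairs: (i) avoid the maximal function altogether by noting that $s\mapsto\|BW_s\|^p$ is a nonnegative submartingale, so optional stopping at the bounded stopping time $t\wedge\tau_n$ gives $\E[\|BW_{t\wedge\tau_n}\|^p]\leq\E[\|BW_T\|^p]$ with no extra constant; or (ii) use the localization (with any crude constant, or the pathwise Gronwall bound $\sup_{t\in[0,T]}\|X_t\|\leq(\|\xi\|+CT+\sup_{s\in[0,T]}\|BW_s\|)\,e^{cT}$) solely to establish that $g$ is finite and bounded on $[0,T]$, and then rerun your unstopped inequality and Gronwall to recover the exact constant. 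You should also record the standard point that the identity in \eqref{eq:apriori1-ass1}, which holds a.s.\ for each fixed $t$, upgrades by continuity of both sides to an identity valid for all $t$ on one event of full probability; this is what legitimizes evaluating it at the random time $t\wedge\tau_n$. With either fix the proof is complete.
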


\subsection{A priori bounds for Brownian motions}
\label{sec:a_priori_BMs}

\begin{lemma}
	\label{l:exp.Gauss}
	Let
	$ d, m \in \N$,
	$ T \in [0, \infty ) $,
	$ p \in (0,\infty)$,
	$ 
	B \in \R^{d \times m} 
	$, 
	let 
	$
	\left \| \cdot \right \| \colon \R^d \to [0,\infty)
	$ 
	be the $d$-dimensional Euclidean norm,
	let
	$
	( \Omega, \mathcal{F}, \P ) 
	$
	be a probability space,
	and let
	$
	W \colon [0,T] \times \Omega \to \R^m
	$
	be a standard Brownian motion. 
	Then it holds for all $ t \in [0,T] $ that 
	\begin{equation}
	\label{eq:bm-lp}
	\begin{split}
	\big( \E\big[ \| B W_t \|^p \big] \big)^{ \nicefrac{1}{p} }
	& \leq 
	\sqrt{ \max\{1,p-1\} \operatorname{Trace}(B^{ \ast } B) \, t} 
	.
	\end{split}
	\end{equation}
\end{lemma}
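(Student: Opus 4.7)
The plan is to split at $p = 2$. For $p \in (0, 2]$ the claimed bound reduces to the case $p = 2$, which follows directly from the fact that $BW_t$ is a centered Gaussian vector in $\R^d$ with covariance matrix $BB^*t$, yielding $\E[\|BW_t\|^2] = \operatorname{Trace}(BB^*)t = \operatorname{Trace}(B^*B)t$; Lyapunov's inequality $(\E[\|BW_t\|^p])^{1/p} \leq (\E[\|BW_t\|^2])^{1/2}$ then handles all $p \in (0, 2]$, matching the claim since $\max\{1, p - 1\} = 1$ in this range.

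For $p \in (2, \infty)$ I would apply It\^o's formula to the $C^2$ function $f(x) = \|x\|^p$, whose first two derivatives are $\nabla f(x) = p\|x\|^{p-2} x$ and $(\operatorname{Hess} f)(x) = p\|x\|^{p-2} I_d + p(p-2)\|x\|^{p-4} xx^*$. Since $d(BW_t) = B\,dW_t$ with quadratic covariation $BB^*\,dt$, the resulting It\^o decomposition of $\|BW_t\|^p$ has local-martingale part $\int_0^t p\|BW_s\|^{p-2}(BW_s)^* B\, dW_s$ and drift
\begin{equation*}
\tfrac{1}{2}\int_0^t\! \bigl[p\|BW_s\|^{p-2}\operatorname{Trace}(B^*B) + p(p-2)\|BW_s\|^{p-4}\,(BW_s)^* BB^* (BW_s)\bigr]\,ds.
\end{equation*}
The key observation is that $(BW_s)^* BB^* (BW_s) = \|B^* BW_s\|^2 \leq \lambda_{\max}(BB^*)\|BW_s\|^2 \leq \operatorname{Trace}(B^*B)\|BW_s\|^2$, so the drift is pointwise bounded by $\tfrac{p(p-1)}{2}\operatorname{Trace}(B^*B)\|BW_s\|^{p-2}$.

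Taking expectations (with a standard localization step to ensure the martingale part vanishes) and applying H\"older's inequality in the form $\E[\|BW_s\|^{p-2}] \leq (\E[\|BW_s\|^p])^{(p-2)/p}$ yields the nonlinear integral inequality
\begin{equation*}
\phi(t) \leq \tfrac{p(p-1)}{2}\operatorname{Trace}(B^*B)\int_0^t \phi(s)^{(p-2)/p}\,ds, \qquad \phi(t) := \E[\|BW_t\|^p].
\end{equation*}
A direct substitution verifies that $\psi(t) := ((p-1)\operatorname{Trace}(B^*B)t)^{p/2}$ turns this inequality into an identity, and a comparison argument then gives $\phi(t) \leq \psi(t)$, which is precisely the claimed estimate after taking $p$-th roots. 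The main obstacle I anticipate is this nonlinear Gronwall step, since the map $x \mapsto x^{(p-2)/p}$ fails to be Lipschitz at the origin for $p > 2$; I would handle it either by working directly with the function $\phi(t)^{2/p}$ (which, once one uses continuity of $\phi$ and $\phi(0) = 0$, satisfies a linear differential estimate amenable to elementary calculus) or by regularizing with an additive $\varepsilon > 0$ and letting $\varepsilon \downarrow 0$.
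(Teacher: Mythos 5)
Your argument is correct, and it follows exactly the route the paper indicates: the paper omits the proof of Lemma~\ref{l:exp.Gauss}, remarking only that it is a straightforward consequence of It\^o's formula, which is precisely your treatment of the case $p\in(2,\infty)$ (with the case $p\in(0,2]$ reduced to the exact second moment via Jensen's inequality). Your handling of the non-Lipschitz Gronwall step is also sound, since the concave nonlinearity $x\mapsto x^{(p-2)/p}$ admits a Bihari/Osgood-type comparison, or equivalently your $\varepsilon$-regularization or the substitution $\phi^{2/p}$; alternatively the Brownian scaling $\E[\|BW_t\|^p]=t^{p/2}\,\E[\|BW_1\|^p]$ turns the integral inequality into an algebraic one and avoids the issue entirely.
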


\subsection{Strong perturbations of SDEs}
\label{sec:strong_perturbation}

\begin{lemma}
	\label{lem:per:pathwise}
	Let $ d \in \N $, $ L, T \in [0,\infty) $, $ \delta \in (0,\infty) $,
	$ p \in [2,\infty) $, 
	let $ \left\| \cdot \right\| \colon \R^d \to [0,\infty) $ be the $ d $-dimensional Euclidean norm, 
	let $ \mu \colon \R^d \to \R^d $ be a function which satisfies for all 
	$ v, w \in \R^d $ that
	\begin{equation}
	\label{eq:global:Lip}
	\| \mu( v ) - \mu( w ) \| \leq L \| v - w \|
	,
	\end{equation}
	let $ X, Y \colon [0,T] \to \R^d $ be continuous functions, 
	let 
	$ 
	a \colon [0,T] \to \R^d $ be a
	$  \mathcal{B}( [0,T] )  $/$ \mathcal{B}( \R^d ) 
	$-measurable function, 
	and assume for all $ t \in [0,T] $ that 
	$
	\int_0^t \| a_s \| \, ds < \infty
	$
	and
	\begin{equation}
	\label{eq:strong:per}
	X_t - Y_t = X_0 - Y_0 + \int_0^t \big[ \mu( X_s ) -a_s \big] \, ds 
	.
	\end{equation}
	Then it holds for all $ t \in [0,T] $ that
	\begin{equation}
	\label{eq:lem:per}
	\begin{split}
	&
	\| X_t - Y_t \|^p \\
	&\leq
	\exp\!\left( 
	\left[ 
	L 
	+
	\tfrac{ ( 1 - 1 / p ) }{ \delta }
	\right]
	p \, t
	\right)
	\left(
	\left\| X_0 - Y_0 \right\|^p
	+
	\delta^{(p-1) }
	\int_0^t
	\| a_s - \mu( Y_s ) \|^p
	\, ds
	\right)
	.
	\end{split}
	\end{equation}
\end{lemma}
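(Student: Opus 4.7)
\medskip

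The plan is to set $e(t) = X_t - Y_t$, to derive a differential inequality for $\|e(t)\|^p$, and then to apply a Gronwall-type argument. Since $a$ is locally integrable and $\mu(X_{\cdot})$ is continuous (because $X$ is continuous and $\mu$ is Lipschitz), the defining identity \eqref{eq:strong:per} shows that $e$ is absolutely continuous with
\begin{equation*}
\dot e(t) = \mu(X_t) - a_t = [\mu(X_t) - \mu(Y_t)] + [\mu(Y_t) - a_t]
\qquad \text{for a.e.\ } t \in [0,T].
\end{equation*}
Because $p \geq 2$, the map $v \mapsto \|v\|^p = (\|v\|^2)^{p/2}$ is $C^1$, so $t \mapsto \|e(t)\|^p$ is absolutely continuous with derivative $p \|e(t)\|^{p-2} \langle e(t), \dot e(t) \rangle$ (using the convention $0 \cdot \infty = 0$ at zeros of $e$, or noting $\|e\|^{p-2}\|e\|\leq \|e\|^{p-1}$ for the estimates below).

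First I would use Cauchy--Schwarz together with the global Lipschitz hypothesis \eqref{eq:global:Lip} to obtain, for a.e.\ $t \in [0,T]$,
\begin{equation*}
\tfrac{d}{dt} \|e(t)\|^p
\leq
p L \|e(t)\|^p + p \|e(t)\|^{p-1} \|\mu(Y_t) - a_t\|.
\end{equation*}
Then, to separate the $\|e\|^{p-1}$ factor, I would apply weighted Young's inequality with conjugate exponents $p$ and $p/(p-1)$ in the form $\alpha\beta \leq \frac{\alpha^{p/(p-1)}}{p/(p-1)} + \frac{\beta^p}{p}$, choosing the weight so that $\alpha = \|e(t)\|^{p-1}\delta^{-(p-1)/p}$ and $\beta = \delta^{(p-1)/p} \|\mu(Y_t) - a_t\|$. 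This yields
\begin{equation*}
p \|e(t)\|^{p-1} \|\mu(Y_t) - a_t\|
\leq
\tfrac{p-1}{\delta} \|e(t)\|^p + \delta^{p-1} \|\mu(Y_t) - a_t\|^p ,
\end{equation*}
and hence
\begin{equation*}
\tfrac{d}{dt} \|e(t)\|^p
\leq
\underbrace{\bigl( pL + \tfrac{p-1}{\delta} \bigr)}_{= \,p\,[L + (1-1/p)/\delta]} \|e(t)\|^p + \delta^{p-1} \|\mu(Y_t) - a_t\|^p.
\end{equation*}

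Finally, I would invoke the scalar Gronwall integral inequality on the absolutely continuous function $t \mapsto \|e(t)\|^p$ with the forcing $h(t) = \delta^{p-1} \|\mu(Y_t) - a_t\|^p$ and the exponential factor $\alpha = p[L + (1-1/p)/\delta]$. Since $e^{\alpha(t-s)} \leq e^{\alpha t}$ for $s \in [0,t]$, the resulting bound
\begin{equation*}
\|e(t)\|^p
\leq
e^{\alpha t}\|e(0)\|^p + \int_0^t e^{\alpha (t-s)} h(s)\, ds
\leq
e^{\alpha t} \!\left( \|e(0)\|^p + \int_0^t h(s)\, ds \right)
\end{equation*}
is exactly \eqref{eq:lem:per}. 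The main obstacle is purely bookkeeping: choosing the Young-inequality weight that exactly produces the factor $\delta^{p-1}$ in front of the $\|a_s - \mu(Y_s)\|^p$-integrand and the constant $(1-1/p)/\delta$ inside the exponential; there is no deep analytic difficulty, since the Lipschitz assumption \eqref{eq:global:Lip} already takes care of dissipativity and $p \geq 2$ eliminates any non-smoothness of $v \mapsto \|v\|^p$.
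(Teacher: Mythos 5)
Your proposal is correct and follows essentially the same route as the paper: the same Cauchy--Schwarz plus Lipschitz bound on the $\langle e, \mu(X)-\mu(Y)\rangle$ term, the same weighted Young's inequality with weight $\delta^{(p-1)/p}$ producing the coefficients $(p-1)/\delta$ and $\delta^{p-1}$, and the same final bound $e^{\alpha(t-s)}\leq e^{\alpha t}$. The only cosmetic difference is that the paper differentiates the integrating-factor quantity $\|X_t-Y_t\|^p e^{-\alpha t}$ directly rather than stating a differential inequality and then invoking Gronwall, which is an equivalent formulation of the identical argument.
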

\begin{proof}[Proof of Lemma~\ref{lem:per:pathwise}]
	Throughout this proof let $\langle \cdot, \cdot \rangle \colon \R^d \times \R^d \to \R$ be the $d$-di\-men\-sion\-al Euclidean scalar product and let $ \alpha \in (0, \infty)$ be the real number given by 
	\begin{equation}
	\alpha = \left[ 
	L 
	+
	\tfrac{ ( 1 - 1 / p ) }{ \delta }
	\right] p.
	\end{equation}	
	Note that \eqref{eq:strong:per} ensures that the function $([0, T] \ni t \mapsto (X_t - Y_t) \in \R^d )$ is absolutely continuous.
	The fundamental theorem of calculus and the chain rule  hence prove that for all $t \in [0, T]$ it holds that
	\begin{equation}
	\label{eq:strong:ftc}
	\begin{split}
	\frac{\|X_t - Y_t \|^p}{\exp(\alpha t)} & = \|X_0 - Y_0 \|^p + \int_0^t \frac{p \, \|X_s - Y_s\|^{p-2} \langle X_s - Y_s, \mu(X_s) - a_s \rangle}{\exp(\alpha s)} \, ds \\
	& \quad - \int_0^t \frac{\alpha \|X_s - Y_s\|^p}{\exp(\alpha s)} \, ds \\
	& = \| X_0 - Y_0 \|^p + \int_0^t \frac{p \, \|X_s - Y_s\|^{p-2} \langle X_s - Y_s, \mu(X_s) - \mu(Y_s) \rangle}{\exp(\alpha s)} \, ds \\
	& \quad + \int_0^t \frac{p \, \|X_s - Y_s\|^{p-2} \langle X_s - Y_s, \mu(Y_s) - a_s \rangle - \alpha \|X_s - Y_s\|^p}{\exp(\alpha s)} \, ds.
	\end{split}
	\end{equation}
	Next observe that \eqref{eq:global:Lip} and the Cauchy-Schwartz inequality ensure that for all $ s \in [0, T]$ it holds that
	\begin{equation}
	\begin{split}
	\langle X_s - Y_s, \mu(X_s) - \mu(Y_s) \rangle \leq \|X_s - Y_s\| \|\mu(X_s) - \mu(Y_s) \| \leq L \|X_s - Y_s \|^2.
	\end{split}
	\end{equation}
	This and \eqref{eq:strong:ftc} demonstrate that for all $t \in [0, T]$ it holds that
	\begin{align}
	\label{eq:apriori}
	\nonumber
	& \frac{\|X_t - Y_t \|^p}{\exp(\alpha t)} \leq \| X_0 - Y_0 \|^p + \int_0^t \frac{p L \|X_s - Y_s\|^{p} }{\exp(\alpha s)} \, ds  \\
	& \quad + \int_0^t \frac{p \, \|X_s - Y_s\|^{p-2} \langle X_s - Y_s, \mu(Y_s) - a_s \rangle - \alpha \|X_s - Y_s\|^p}{\exp(\alpha s)} \, ds\\
	\nonumber
	& =   \| X_0 - Y_0 \|^p + \int_0^t \frac{p \, \|X_s - Y_s\|^{p-2} \langle X_s - Y_s, \mu(Y_s) - a_s \rangle - ( \alpha - p L) \|X_s - Y_s\|^p}{\exp(\alpha s)} \, ds\\
	\nonumber
	& = \| X_0 - Y_0 \|^p + \int_0^t \frac{p \, \|X_s - Y_s\|^{p-2} \langle X_s - Y_s, \mu(Y_s) - a_s \rangle - \tfrac{(p-1)}{\delta} \|X_s - Y_s\|^p}{\exp(\alpha s)} \, ds.
	\end{align}
	Next observe that the Cauchy-Schwartz inequality and Young's inequality prove that for all $s \in [0, T]$ it holds that
	\begin{equation}
	\begin{split}
	&\|X_s - Y_s\|^{p-2} \langle X_s - Y_s, \mu(Y_s) - a_s \rangle \leq \|X_s - Y_s\|^{p-1} \|\mu(Y_s) - a_s \| \\
	&= \delta^{\nicefrac{(1-p)}{p}}  \|X_s - Y_s\|^{p-1}  \delta^{\nicefrac{(p-1)}{p}}  \|\mu(Y_s) - a_s \|\\
	& \leq \tfrac{(p-1)}{p} \big[\delta^{\nicefrac{(1-p)}{p}}  \|X_s - Y_s\|^{p-1}\big]^{\nicefrac{p}{(p-1)}} + \tfrac{1}{p} \big[\delta^{\nicefrac{(p-1)}{p}}  \|\mu(Y_s) - a_s \| \big]^p\\
	& = \tfrac{(p-1)}{\delta p} \|X_s - Y_s \|^p + \tfrac{\delta^{(p-1)}}{p} \|\mu(Y_s) - a_s\|^p.
	\end{split}
	\end{equation}
	Combining this with \eqref{eq:apriori} assures that for all $t \in [0, T]$ it holds that
	\begin{align}
	\nonumber
	&\frac{\|X_t - Y_t \|^p}{\exp(\alpha t)}  \\
	\nonumber
	&\leq \|X_0 - Y_0 \|^p 
	+ \int_0^t \frac{ \tfrac{(p-1)}{\delta } \|X_s - Y_s \|^p + \delta^{(p-1)} \|\mu(Y_s) - a_s\|^p - \tfrac{(p-1)}{\delta} \|X_s - Y_s\|^p}{\exp(\alpha s)} \, ds \\
	& = \| X_0 - Y_0 \|^p + \int_0^t \frac{ \delta^{(p-1)} \|\mu(Y_s) - a_s\|^p}{\exp(\alpha s)} \, ds
	\\ & \nonumber
	\leq \| X_0 - Y_0 \|^p + \int_0^t \delta^{(p-1)} \|\mu(Y_s) - a_s\|^p \, ds.
	\end{align}
	This implies \eqref{eq:lem:per}. The proof of Lemma~\ref{lem:per:pathwise} is thus completed.
\end{proof}

\begin{prop}[Perturbation]
	\label{prop:perturbation_SDE}
	Let $ d, m \in \N $, $ x, y \in \R^d $, $ L, T \in [0,\infty) $, $ \delta \in (0,\infty) $,
	$ p \in [2,\infty) $, 
	$ B \in \R^{ d \times m } $, 
	let $ \left\| \cdot \right\| \colon \R^d \to [0,\infty) $ be the $ d $-dimensional Euclidean norm, 
	let $ ( \Omega, \mathcal{F}, \P ) $ be a probability space, 
	let $ W \colon [0,T] \times \Omega \to \R^m $ be a standard Brownian motion, 
	let $ \mu \colon \R^d \to \R^d $ be a function which satisfies for all 
	$ v, w \in \R^d $ that
	\begin{equation}
	\label{eq:prop:global:Lip}
	\| \mu( v ) - \mu( w ) \| \leq L \| v - w \|
	,
	\end{equation}
	let $ X, Y \colon [0,T] \times \Omega \to \R^d $ be stochastic processes with 
	continuous sample paths, 
	let 
	$ 
	a \colon [0,T] \times \Omega \to \R^d $ be a
	$ ( \mathcal{B}( [0,T] ) \otimes \mathcal{F} ) $/$ \mathcal{B}( \R^d ) 
	$-measurable function, 
	and assume for all $ t \in [0,T] $ that 
	$
	\int_0^t \| a_s \| \, ds < \infty
	$,
	$
	Y_t = y + \int_0^t a_s \, ds + B W_t
	$,
	and
	\begin{equation}
	X_t = x + \int_0^t \mu( X_s ) \, ds + B W_t
	.
	\end{equation}
	Then it holds for all $ t \in [0,T] $ that
	\begin{equation}
	\label{eq:prop:per}
	\begin{split}
	&
	\big( \E\big[ \| X_t - Y_t \|^p \big] \big)^{ \nicefrac{ 1 }{ p } }
	\\ & 
	\leq
	\exp\!\left( 
	\left[ 
	L 
	+
	\tfrac{ ( 1 - 1 / p ) }{ \delta }
	\right]
	t
	\right)
	\left(
	\left\| x - y \right\|
	+
	\delta^{ ( 1 - 1 / p ) }
	\left[ 
	\int_0^t
	\E\big[
	\| a_s - \mu( Y_s ) \|^p
	\big]
	\, ds
	\right]^{ \nicefrac{ 1 }{ p } }
	\right)
	.
	\end{split}
	\end{equation}
\end{prop}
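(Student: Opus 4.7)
The plan is to derive the probabilistic perturbation estimate from the pathwise one in Lemma~\ref{lem:per:pathwise} by an $\omega$-wise application followed by expectation, Fubini, and taking a $p$-th root.

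First I would subtract the two integral equations: since for every $t \in [0,T]$ we have $X_t = x + \int_0^t \mu(X_s)\,ds + BW_t$ and $Y_t = y + \int_0^t a_s\,ds + BW_t$, the Brownian term cancels and we obtain
\[
X_t - Y_t = (x - y) + \int_0^t \bigl[\mu(X_s) - a_s\bigr]\,ds,
\]
which fits the hypothesis \eqref{eq:strong:per} in Lemma~\ref{lem:per:pathwise} with $X_0 - Y_0 = x-y$. I would verify that there is a full-measure event $\tilde\Omega \in \mathcal{F}$ on which, for every fixed $\omega \in \tilde\Omega$, the deterministic paths $[0,T] \ni t \mapsto X_t(\omega),Y_t(\omega)\in\R^d$ are continuous, the deterministic function $[0,T] \ni s \mapsto a_s(\omega)\in\R^d$ is measurable with $\int_0^t\|a_s(\omega)\|\,ds<\infty$, and the pathwise identity above holds — all of these follow from the standing hypotheses.

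Next I would apply Lemma~\ref{lem:per:pathwise} pathwise on $\tilde\Omega$ with the same $\mu$, $L$, $\delta$, $p$, $T$ to obtain, for every $\omega \in \tilde\Omega$ and every $t\in[0,T]$,
\[
\|X_t(\omega) - Y_t(\omega)\|^p \leq \exp\!\Bigl(\bigl[L + \tfrac{(1-1/p)}{\delta}\bigr]p\,t\Bigr)\left(\|x-y\|^p + \delta^{(p-1)}\int_0^t \|a_s(\omega) - \mu(Y_s(\omega))\|^p\,ds\right).
\]
Taking expectations of both sides and applying Tonelli's theorem to interchange expectation and the $ds$-integral in the rightmost term yields
\[
\E\!\bigl[\|X_t - Y_t\|^p\bigr] \leq \exp\!\Bigl(\bigl[L + \tfrac{(1-1/p)}{\delta}\bigr]p\,t\Bigr)\left(\|x-y\|^p + \delta^{(p-1)}\int_0^t \E\!\bigl[\|a_s - \mu(Y_s)\|^p\bigr]\,ds\right).
\]

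Finally I would take the $p$-th root and use the subadditivity of $[0,\infty)\ni r \mapsto r^{1/p} \in [0,\infty)$ (valid since $p \geq 2 \geq 1$), which gives $(u+v)^{1/p}\leq u^{1/p}+v^{1/p}$ for $u,v\geq 0$. Combined with the multiplicativity $(\exp(\alpha p t))^{1/p}=\exp(\alpha t)$ and $(\delta^{p-1})^{1/p}=\delta^{(1-1/p)}=\delta^{(p-1)/p}$, this delivers exactly \eqref{eq:prop:per}. No step is genuinely hard here; the only real care is needed in confirming that Lemma~\ref{lem:per:pathwise}'s deterministic hypotheses hold pathwise almost surely, which is routine from the assumed sample-path continuity of $X,Y$ and the joint measurability of $a$.
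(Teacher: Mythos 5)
Your proposal is correct and follows essentially the same route as the paper: subtract the two integral equations so the Brownian term cancels, apply Lemma~\ref{lem:per:pathwise} pathwise, take expectations (with Tonelli), and conclude via the subadditivity of the $p$-th root. The only cosmetic difference is your invocation of a full-measure event, which is not even needed here since the hypotheses of Proposition~\ref{prop:perturbation_SDE} are stated pointwise in $\omega$.
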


\begin{proof}[Proof of Proposition~\ref{prop:perturbation_SDE}]
	First, note that for all $t \in [0, T]$ it holds that
	\begin{equation}
	X_t - Y_t = x-y + \int_0^t \big[ \mu(X_s) - a_s \big] \, ds.
	\end{equation}	
	Lemma~\ref{lem:per:pathwise} hence ensures that for all $ t \in [0,T] $ that
	\begin{equation}
	\begin{split}
	&
	\| X_t - Y_t \|^p \\
	&\leq
	\exp\!\left( 
	\left[ 
	L 
	+
	\tfrac{ ( 1 - 1 / p ) }{ \delta }
	\right]
	p \, t
	\right)
	\left(
	\left\| x - y \right\|^p
	+
	\delta^{(p-1) }
	\int_0^t
	\| a_s - \mu( Y_s ) \|^p
	\, ds
	\right)
	.
	\end{split}
	\end{equation}
	This implies that for all $t \in [0, T]$ it holds that
	\begin{equation}
	\begin{split}
	&
	\E \big[ \| X_t - Y_t \|^p  \big]\\
	&\leq
	\exp\!\left( 
	\left[ 
	L 
	+
	\tfrac{ ( 1 - 1 / p ) }{ \delta }
	\right]
	p \, t
	\right)
	\left(
	\left\| x - y \right\|^p
	+
	\delta^{(p-1) }
	\int_0^t
	\E \big[\| a_s - \mu( Y_s ) \|^p \big]
	\, ds
	\right)
	.
	\end{split}
	\end{equation}
	The fact that $ \forall \, b, c \in \R \colon |b +c|^{\nicefrac{1}{p}} \leq |b|^{\nicefrac{1}{p}} + |c|^{\nicefrac{1}{p}} $ hence demonstrates that for all $t \in [0, T]$ it holds that
	\begin{equation}
	\begin{split}
	&
	\big( \E\big[ \| X_t - Y_t \|^p \big] \big)^{ \nicefrac{ 1 }{ p } }
	\\ & 
	\leq
	\exp\!\left( 
	\left[ 
	L 
	+
	\tfrac{ ( 1 - 1 / p ) }{ \delta }
	\right]
	t
	\right)
	\left(
	\left\| x - y \right\|
	+
	\delta^{ ( 1 - 1 / p ) }
	\left[ 
	\int_0^t
	\E\big[
	\| a_s - \mu( Y_s ) \|^p
	\big]
	\, ds
	\right]^{ \nicefrac{ 1 }{ p } }
	\right)
	.
	\end{split}
	\end{equation}	
	The proof of Proposition~\ref{prop:perturbation_SDE} is thus completed.	
\end{proof}

\subsection{Weak perturbations of SDEs}
\label{sec:weak_perturbation}

\begin{lemma}
	\label{lem:perturbation_PDE}
	Let $ d, m \in \N $, $ \xi \in \R^d $, 
	$ h, T, \varepsilon_0, \varepsilon_1, \varsigma_0, \varsigma_1, L_0, L_1, \ell \in [0,\infty) $, 
	$ \delta \in (0,\infty) $, $ B \in \R^{ d \times m } $, 
	$p \in [2, \infty)$,
	$q \in (1, 2]$ 
	satisfy
	$
	\nicefrac{ 1 }{ p } + \nicefrac{ 1 }{ q } = 1
	$,
	let $ \left\| \cdot \right\| \colon \R^d \to [0,\infty) $ be the $ d $-dimensional Euclidean norm, 
	let $ ( \Omega, \mathcal{F}, \P ) $ be a probability space, 
	let $ W \colon [0,T] \times \Omega \to \R^m $ be a standard Brownian motion, 
	let 
	$ \phi_0 \colon \R^d \to \R $, 
	$ f_1 \colon \R^d \to \R^d $, 
	$ \phi_2 \colon \R^d \to \R^d $, 
	and
	$ \chi \colon [0,T] \to [0,T] $ be functions,
	let 
	$ f_0 \colon \R^d \to \R $
	be a
	$ \mathcal{B}( \R^d ) $/$ \mathcal{B}( \R ) $-measurable function,
	let
	$ \phi_1 \colon \R^d \to \R^d $
	be a $ \mathcal{B}( \R^d ) $/$ \mathcal{B}( \R^d ) $-measurable function, 
	assume for all 
	$ t \in [0,T] $, $ x, y \in \R^d $ that
	\begin{equation}
	| \phi_0( x ) - f_0( x ) |
	\leq 
	\varepsilon_0 
	( 1 + \| x \|^{ \varsigma_0 } )
	,
	\qquad
	\| \phi_1( x ) - f_1( x ) \|
	\leq 
	\varepsilon_1
	( 1 + \| x \|^{ \varsigma_1 } )
	,
	\end{equation}
	\begin{equation}
	| \phi_0( x ) - \phi_0( y ) | 
	\leq 
	L_0
	\left(
	1 
	+ 
	\int_0^1
	\big[ 
	r \| x \| 
	+ 
	( 1 - r ) 
	\| y \|
	\big]^{ \ell }
	\,
	dr
	\right)
	\left\| x - y \right\| ,
	\end{equation}
	\begin{equation}
	\| f_1( x ) - f_1( y ) \| \leq L_1 \| x - y \| 
	,
	\qquad 
	\text{and}
	\qquad
	\chi( t ) 
	= 
	\max\!\left(
	\{ 0, h , 2 h, \dots \}
	\cap [0,t] 
	\right)
	,
	\end{equation}
	and 
	let $ X, Y \colon [0,T] \times \Omega \to \R^d $ be stochastic processes with 
	continuous sample paths which satisfy
	for all $ t \in [0,T] $ that 
	$
	Y_t = \phi_2( \xi ) + \int_0^t \phi_1\big( Y_{ \chi( s ) } \big) \, ds + B W_t
	$
	and
	\begin{equation}
	X_t = \xi + \int_0^t f_1( X_s ) \, ds + B W_t
	.
	\end{equation}
	Then it holds that
	\begin{align*}
	&
	\big| \E\big[ f_0( X_T ) \big] - \E\big[ \phi_0( Y_T ) \big] \big| \numberthis
	\\ &  
	\leq
	\varepsilon_0 
	\left(
	1 
	+
	\E\!\left[ 
	\| X_T \|^{ \varsigma_0 }
	\right]
	\right)
	\\ & \quad
	+
	L_0
	\,
	2^{ \max\{ \ell - 1, 0 \} }
	\exp\!\left( 
	\left[ 
	L_1 
	+
	\tfrac{ ( 1 - 1 / p ) }{ \delta }
	\right]
	T
	\right)
	\left[
	1 
	+
	\left(
	\E\big[
	\| X_T \|^{ \ell q }
	\big]
	\right)^{ \nicefrac{ 1 }{ q } }
	+
	\left(
	\E\big[
	\| Y_T \|^{ \ell q }
	\big]
	\right)^{ \nicefrac{ 1 }{ q } }
	\right]
	\\ & \quad \cdot 
	\Bigg[
	\left\| \xi - \phi_2( \xi ) \right\|
	+
	\varepsilon_1 
	\,
	\delta^{ ( 1 - \nicefrac{ 1 }{ p } ) }
	\, 
	T^{ \nicefrac{ 1 }{ p } }
	\!
	\left[
	1
	+
	\sup_{ t \in [0,T] }
	\big(
	\E\big[
	\| Y_t \|^{ p \varsigma_1 }
	\big]
	\big)^{ \nicefrac{ 1 }{ p } }
	\right]
	\\ & \quad
	+
	h
	\,
	\delta^{ ( 1 - \nicefrac{ 1 }{ p } ) }
	\,
	T^{ \nicefrac{ 1 }{ p } }
	L_1 
	\!
	\left[
	\sup_{ t \in [0,T] }
	\big( 
	\E\big[
	\|
	\phi_1( Y_t )
	\|^p
	\big]
	\big)^{ \nicefrac{ 1 }{ p } }
	\right]
	+
	\delta^{ ( 1 - \nicefrac{ 1 }{ p } ) }
	\,
	T^{ \nicefrac{ 1 }{ p } }
	L_1 
	\big(
	\E\big[
	\| 
	B W_h 
	\|^p
	\big]
	\big)^{ \nicefrac{ 1 }{ p } }
	\Bigg]
	.
	\end{align*}
\end{lemma}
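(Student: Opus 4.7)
I would begin with the triangle-inequality decomposition
$$
\bigl|\E[f_0(X_T)] - \E[\phi_0(Y_T)]\bigr|
\leq \E\bigl[|f_0(X_T) - \phi_0(X_T)|\bigr]
+ \E\bigl[|\phi_0(X_T) - \phi_0(Y_T)|\bigr].
$$
The first summand is immediately bounded by $\varepsilon_0(1 + \E[\|X_T\|^{\varsigma_0}])$ via the hypothesis on $|\phi_0 - f_0|$ together with Jensen's inequality, and this produces the first line of the claimed estimate.

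For the second summand I would apply the local Lipschitz assumption on $\phi_0$ pointwise, combined with the elementary inequality $(ra + (1-r)b)^\ell \leq 2^{\max\{\ell-1,0\}}(a^\ell + b^\ell)$ for $a,b \geq 0$, $r \in [0,1]$ (valid by subadditivity of $x \mapsto x^\ell$ when $\ell \in [0,1]$ and by convexity when $\ell \geq 1$). This yields
$$
|\phi_0(X_T) - \phi_0(Y_T)| \leq L_0 \, 2^{\max\{\ell-1,0\}} \bigl(1 + \|X_T\|^\ell + \|Y_T\|^\ell\bigr) \|X_T - Y_T\|.
$$
H\"older's inequality with conjugate exponents $p,q$ together with Minkowski then separates the moment factor into $1 + (\E\|X_T\|^{\ell q})^{1/q} + (\E\|Y_T\|^{\ell q})^{1/q}$ and reduces the task to controlling the strong error $(\E\|X_T - Y_T\|^p)^{1/p}$.

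For this strong error I would invoke Proposition~\ref{prop:perturbation_SDE} with the driver $a_s := \phi_1(Y_{\chi(s)})$, initial points $x = \xi$, $y = \phi_2(\xi)$, and Lipschitz constant $L_1$. This produces the factor $\exp([L_1 + (1-1/p)/\delta]T)$ and a bracket containing $\|\xi - \phi_2(\xi)\|$ plus $\delta^{1-1/p}[\int_0^T \E\|\phi_1(Y_{\chi(s)}) - f_1(Y_s)\|^p \, ds]^{1/p}$. I would then split
$$
\phi_1(Y_{\chi(s)}) - f_1(Y_s) = [\phi_1(Y_{\chi(s)}) - f_1(Y_{\chi(s)})] + [f_1(Y_{\chi(s)}) - f_1(Y_s)]
$$
and apply Minkowski. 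The first piece is bounded by $\varepsilon_1(1 + \|Y_{\chi(s)}\|^{\varsigma_1})$, producing (after Minkowski in $L^p$ and pulling out $T^{1/p}$ via a trivial sup in $s$) the summand $\varepsilon_1 \delta^{1-1/p} T^{1/p}[1 + \sup_t (\E\|Y_t\|^{p\varsigma_1})^{1/p}]$. For the second piece, Lipschitzness of $f_1$ combined with the identity $Y_s - Y_{\chi(s)} = \int_{\chi(s)}^s \phi_1(Y_{\chi(r)}) \, dr + B(W_s - W_{\chi(s)})$ bounds the drift contribution by $h \sup_t (\E\|\phi_1(Y_t)\|^p)^{1/p}$, while stationarity of Brownian increments lets me replace $W_s - W_{\chi(s)}$ by $W_h$ in $L^p$; these deliver the remaining two summands involving $h$ and $(\E\|BW_h\|^p)^{1/p}$.

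The routine but fiddly step that I expect to be the main bookkeeping obstacle is the careful alignment of the $\delta^{1-1/p}$ and $T^{1/p}$ factors across all three contributions to the bracket, as well as correctly interchanging the Minkowski inequality with the time integral so that the required suprema in $t \in [0,T]$ appear outside $T^{1/p}$ precisely as displayed in the claim.
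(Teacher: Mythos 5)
Your plan matches the paper's proof step for step: the same triangle-inequality split, the same use of the local Lipschitz hypothesis on $\phi_0$ together with the power inequality and H\"older, the same invocation of Proposition~\ref{prop:perturbation_SDE} with $a_s = \phi_1(Y_{\chi(s)})$, and the same decomposition of $\phi_1(Y_{\chi(s)})-f_1(Y_s)$ into an approximation error plus a Lipschitz term handled via the Euler identity for $Y_s - Y_{\chi(s)}$. The only cosmetic differences are that you bypass the paper's intermediate $\tfrac{1}{\ell+1}$ factor by dropping $r^\ell$ and $(1-r)^\ell$ inside the $r$-integral, and your phrase ``replace $W_s-W_{\chi(s)}$ by $W_h$'' should more precisely read: $W_s-W_{\chi(s)}$ has the law of $W_{s-\chi(s)}$ with $s-\chi(s)\le h$, and $(\E\|BW_u\|^p)^{1/p}$ is nondecreasing in $u$ by Brownian scaling.
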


\begin{proof}[Proof of Lemma~\ref{lem:perturbation_PDE}]
	First, note that the triangle inequality ensures that
	\begin{equation}
	\begin{split}
	&
	\big| \E\big[ f_0( X_T ) \big] - \E\big[ \phi_0( Y_T ) \big] \big|
	\\ & 
	\leq
	\big| \E\big[ f_0( X_T ) \big] - \E\big[ \phi_0( X_T ) \big] \big|
	+
	\big| \E\big[ \phi_0( X_T ) \big] - \E\big[ \phi_0( Y_T ) \big] \big|
	\\ &
	\leq
	\E\big[ 
	| f_0( X_T ) - \phi_0( X_T ) |
	\big]
	+
	\E\big[ 
	| \phi_0( X_T ) - \phi_0( Y_T ) |
	\big]
	\\ &
	\leq
	\varepsilon_0 
	\,
	\E\!\left[ 
	1 + \| X_T \|^{ \varsigma_0 }
	\right]
	+
	\E\big[ 
	| \phi_0( X_T ) - \phi_0( Y_T ) |
	\big]
	.
	\end{split}
	\end{equation}
	This implies that
	\begin{equation}
	\begin{split}
	&
	\big| \E\big[ f_0( X_T ) \big] - \E\big[ \phi_0( Y_T ) \big] \big|
	\\ &
	\leq
	\varepsilon_0 
	\,
	\E\!\left[ 
	1 + \| X_T \|^{ \varsigma_0 }
	\right]
	\\ & \quad
	+
	L_0 \,
	\E\!\left[ 
	\left(
	1 
	+
	\int_0^1
	\big[ 
	r \| X_T \| 
	+ 
	( 1 - r ) 
	\| Y_T \|
	\big]^{ \ell }
	\, dr
	\right)
	\left\| 
	X_T - Y_T
	\right\|
	\right]
	\\ &
	\leq
	\varepsilon_0 
	\,
	\E\!\left[ 
	1 + \| X_T \|^{ \varsigma_0 }
	\right]
	\\ & \quad
	+
	L_0 \,
	\E\!\left[ 
	\left(
	1 
	+
	2^{ \max\{ \ell - 1, 0 \} }
	\int_0^1
	\| r X_T \|^{ \ell } + 
	\| ( 1 - r ) Y_T \|^{ \ell }
	\, dr
	\right)
	\left\| 
	X_T - Y_T
	\right\|
	\right]
	.
	\end{split}
	\end{equation}
	Therefore, we obtain that
	\begin{equation}
	\begin{split}
	&
	\big| \E\big[ f_0( X_T ) \big] - \E\big[ \phi_0( Y_T ) \big] \big|
	\\ &
	\leq
	\varepsilon_0 
	\,
	\E\!\left[ 
	1 + \| X_T \|^{ \varsigma_0 }
	\right]
	\\ & \quad
	+
	L_0 \,
	\E\!\left[ 
	\left(
	1 
	+
	2^{ \max\{ \ell - 1, 0 \} }
	\left[
	\int_0^1
	r^{ \ell }
	\, 
	dr
	\right]
	\big[
	\| X_T \|^{ \ell } 
	+ 
	\| Y_T \|^{ \ell }
	\big]
	\right)
	\left\| 
	X_T - Y_T
	\right\|
	\right]
	\\ &
	=
	\varepsilon_0 
	\,
	\E\!\left[ 
	1 + \| X_T \|^{ \varsigma_0 }
	\right]
	\\ & \quad
	+
	L_0 \,
	\E\!\left[ 
	\left(
	1 
	+
	\left[
	\frac{ 
		2^{ \max\{ \ell - 1, 0 \} }
	}{
		( \ell + 1 )
	}
	\right]
	\big[
	\| X_T \|^{ \ell } 
	+ 
	\| Y_T \|^{ \ell }
	\big]
	\right)
	\left\| 
	X_T - Y_T
	\right\|
	\right]
	.
	\end{split}
	\end{equation}
	H\"{o}lder's inequality hence demonstrates that
	\begin{align*}
	\label{eq:after_Hoelder}
	&
	\big| \E\big[ f_0( X_T ) \big] - \E\big[ \phi_0( Y_T ) \big] \big|
	\\ & 
	\leq
	\varepsilon_0 
	\left(
	1 
	+
	\E\!\left[ 
	\| X_T \|^{ \varsigma_0 }
	\right]
	\right) \numberthis
	\\ & \quad
	+
	L_0
	\left(
	1 
	+
	\left[
	\frac{ 
		2^{ \max\{ \ell - 1, 0 \} }
	}{
		( \ell + 1 )
	}
	\right]
	\left[
	\left(
	\E\big[
	\| X_T \|^{ \ell q }
	\big]
	\right)^{ \nicefrac{ 1 }{ q } }
	+
	\left(
	\E\big[
	\| Y_T \|^{ \ell q }
	\big]
	\right)^{ \nicefrac{ 1 }{ q } }
	\right]
	\right)
	\left(
	\E\!\left[ 
	\left\| 
	X_T - Y_T
	\right\|^p
	\right]
	\right)^{ \nicefrac{ 1 }{ p } }
	.
	\end{align*}
	Next observe that 
	Proposition~\ref{prop:perturbation_SDE}
	(with $d = d$, $m =m$, $x = \xi$, $y = \phi_2(\xi)$, $L = L_1$, $T = T$, $\delta = \delta$, $p = p$, $B=B$,  $(\Omega, \mathcal{F}, \P) = (\Omega, \mathcal{F}, \P )$, $W = W$, $\mu = f_1$, $X = X$, $Y = Y$, $a = ([0, T] \times \Omega \ni (t, \omega) \mapsto \phi_1(Y_{\chi(t)}(\omega)) \in \R^d )$ in the notation of Proposition~\ref{prop:perturbation_SDE})
	ensures that 
	\begin{align*}
	&
	\left(
	\E\!\left[ 
	\left\| 
	X_T - Y_T
	\right\|^p
	\right]
	\right)^{ \nicefrac{ 1 }{ p } }
	\\ &
	\leq
	\exp\!\left( 
	\left[ 
	L_1 
	+
	\tfrac{ ( 1 - 1 / p ) }{ \delta }
	\right]
	T
	\right)
	\\ & \quad \cdot
	\left(
	\left\| \xi - \phi_2( \xi ) \right\|
	+
	\delta^{ ( 1 - \nicefrac{ 1 }{ p } ) }
	\!
	\left[ 
	\int_0^T
	\E\big[
	\| \phi_1( Y_{ \chi(s) } ) - f_1( Y_s ) \|^p
	\big]
	\, ds
	\right]^{ \nicefrac{ 1 }{ p } }
	\right)
	\\ &
	\leq
	\exp\!\left( 
	\left[ 
	L_1 
	+
	\tfrac{ ( 1 - 1 / p ) }{ \delta }
	\right]
	T
	\right)
	\\ & \quad \cdot
	\left(
	\left\| \xi - \phi_2( \xi ) \right\|
	+
	\delta^{ ( 1 - \nicefrac{ 1 }{ p } ) }
	\!
	\left[ 
	\int_0^T
	\E\big[
	\| \phi_1( Y_{ \chi(s) } ) - f_1( Y_{ \chi(s) } ) \|^p
	\big]
	\, ds
	\right]^{ \nicefrac{ 1 }{ p } }
	\right)
	\numberthis
	\\ & \quad
	+
	\exp\!\left( 
	\left[ 
	L_1 
	+
	\tfrac{ ( 1 - 1 / p ) }{ \delta }
	\right]
	T
	\right)
	\left(
	\delta^{ ( 1 - \nicefrac{ 1 }{ p } ) }
	\!
	\left[ 
	\int_0^T
	\E\big[
	\| f_1( Y_{ \chi(s) } ) - f_1( Y_s ) \|^p
	\big]
	\, ds
	\right]^{ \nicefrac{ 1 }{ p } }
	\right)
	\\ &
	\leq
	\exp\!\left( 
	\left[ 
	L_1 
	+
	\tfrac{ ( 1 - 1 / p ) }{ \delta }
	\right]
	T
	\right)
	\left(
	\left\| \xi - \phi_2( \xi ) \right\|
	+
	\varepsilon_1 
	\,
	\delta^{ ( 1 - \nicefrac{ 1 }{ p } ) }
	\!
	\left[ 
	\int_0^T
	\E\big[
	( 1 + \| Y_{ \chi( s ) } \|^{ \varsigma_1 } )^p
	\big]
	\, ds
	\right]^{ \nicefrac{ 1 }{ p } }
	\right)
	\\ & \quad
	+
	\exp\!\left( 
	\left[ 
	L_1 
	+
	\tfrac{ ( 1 - 1 / p ) }{ \delta }
	\right]
	T
	\right)
	L_1 
	\,
	\delta^{ ( 1 - \nicefrac{ 1 }{ p } ) }
	\!
	\left[ 
	\int_0^T
	\E\big[
	\| 
	Y_{ \chi(s) } - Y_s
	\|^p
	\big]
	\, ds
	\right]^{ \nicefrac{ 1 }{ p } }
	.
	\end{align*}
	This shows that
	\begin{equation}
	\label{eq:before_perturbation}
	\begin{split}
	&
	\left(
	\E\!\left[ 
	\left\| 
	X_T - Y_T
	\right\|^p
	\right]
	\right)^{ \nicefrac{ 1 }{ p } }
	\\ &
	\leq
	\exp\!\left( 
	\left[ 
	L_1 
	+
	\tfrac{ ( 1 - 1 / p ) }{ \delta }
	\right]
	T
	\right)
	\left\| \xi - \phi_2( \xi ) \right\|
	\\ & \quad
	+
	\exp\!\left( 
	\left[ 
	L_1 
	+
	\tfrac{ ( 1 - 1 / p ) }{ \delta }
	\right]
	T
	\right)
	\varepsilon_1 
	\,
	\delta^{ ( 1 - \nicefrac{ 1 }{ p } ) }
	\!
	\left[
	T^{ \nicefrac{ 1 }{ p } }
	+
	\left[ 
	\int_0^T
	\E\big[
	\| Y_{ \chi(s) } \|^{ p \varsigma_1 }
	\big]
	\, ds
	\right]^{ \nicefrac{ 1 }{ p } }
	\right]
	\\ & \quad
	+
	\exp\!\left( 
	\left[ 
	L_1 
	+
	\tfrac{ ( 1 - 1 / p ) }{ \delta }
	\right]
	T
	\right)
	L_1 
	\,
	\delta^{ ( 1 - \nicefrac{ 1 }{ p } ) }
	\\ & \quad
	\cdot
	\left[ 
	\int_0^T
	\E\Big[
	\big\| 
	\smallint\nolimits_{ \chi(s) }^s 
	\phi_1( Y_{ \chi( u ) } ) 
	\, du
	+
	B ( W_s - W_{ \chi(s) } )
	\big\|^p
	\Big]
	\,
	ds
	\right]^{ \nicefrac{ 1 }{ p } }
	.
	\end{split}
	\end{equation}
	Moreover, observe that the triangle inequality assures that
	\begin{equation}
	\begin{split}
	&\left[ 
	\int_0^T
	\E\Big[
	\big\| 
	\smallint\nolimits_{ \chi(s) }^s 
	\phi_1( Y_{ \chi( u ) } ) 
	\, du
	+
	B ( W_s - W_{ \chi(s) } )
	\big\|^p
	\Big]
	\,
	ds
	\right]^{ \nicefrac{ 1 }{ p } } \\
	& \leq 
	\left[ 
	\int_0^T
	\E\Big[
	\big\| 
	\smallint\nolimits_{ \chi(s) }^s 
	\phi_1( Y_{ \chi( u ) } ) 
	\, du
	\big\|^p
	\Big]
	\,
	ds
	\right]^{ \nicefrac{ 1 }{ p } } 
	+ 
	\left[ 
	\int_0^T
	\E\Big[
	\big\| 
	B ( W_s - W_{ \chi(s) } )
	\big\|^p
	\Big]
	\,
	ds
	\right]^{ \nicefrac{ 1 }{ p } }\\
	& = 
	\left[ 
	\int_0^T
	\E\Big[ |s - \chi(s)|^{p}
	\|\phi_1( Y_{ \chi( s ) } ) \|^p
	\Big]
	\,
	ds
	\right]^{ \nicefrac{ 1 }{ p } } 
	+  
	\left[ 
	\int_0^T
	\E\Big[
	\big\| 
	B ( W_{s -  \chi(s) } )
	\big\|^p
	\Big]
	\,
	ds
	\right]^{ \nicefrac{ 1 }{ p } }\\
	& \leq h \left[ 
	\int_0^T
	\E\Big[ 
	\|\phi_1( Y_{ \chi( s ) } ) \|^p
	\Big]
	\,
	ds
	\right]^{ \nicefrac{ 1 }{ p } } 
	+  
	\left[ 
	\int_0^T
	\E\Big[
	\big\| 
	B ( W_{s -  \chi(s) } )
	\big\|^p
	\Big]
	\,
	ds
	\right]^{ \nicefrac{ 1 }{ p } }
	.
	\end{split}
	\end{equation}
	This and \eqref{eq:before_perturbation} show that
	\begin{equation}
	\begin{split}
	&
	\left(
	\E\!\left[ 
	\left\| 
	X_T - Y_T
	\right\|^p
	\right]
	\right)^{ \nicefrac{ 1 }{ p } }
	\\ &
	\leq
	\exp\!\left( 
	\left[ 
	L_1 
	+
	\tfrac{ ( 1 - 1 / p ) }{ \delta }
	\right]
	T
	\right)
	\left\| \xi - \phi_2( \xi ) \right\|
	\\ & \quad
	+
	\exp\!\left( 
	\left[ 
	L_1 
	+
	\tfrac{ ( 1 - 1 / p ) }{ \delta }
	\right]
	T
	\right)
	\varepsilon_1 
	\,
	\delta^{ ( 1 - \nicefrac{ 1 }{ p } ) }
	\!
	\left[
	T^{ \nicefrac{ 1 }{ p } }
	+
	T^{ \nicefrac{ 1 }{ p } }
	\left[ 
	\sup_{ t \in [0,T] }
	\big(
	\E\big[
	\| Y_t \|^{ p \varsigma_1 }
	\big]
	\big)^{ \nicefrac{ 1 }{ p } }
	\right]
	\right]
	\\ & \quad
	+
	\exp\!\left( 
	\left[ 
	L_1 
	+
	\tfrac{ ( 1 - 1 / p ) }{ \delta }
	\right]
	T
	\right)
	h 
	\,
	L_1 
	\,
	\delta^{ ( 1 - \nicefrac{ 1 }{ p } ) }
	\,
	T^{ \nicefrac{ 1 }{ p } }
	\!
	\left[
	\sup_{ t \in [0,T] }
	\big( 
	\E\big[
	\|
	\phi_1( Y_{ \chi(t) } )
	\|^p
	\big]
	\big)^{ \nicefrac{ 1 }{ p } }
	\right]
	\\ & \quad
	+
	\exp\!\left( 
	\left[ 
	L_1 
	+
	\tfrac{ ( 1 - 1 / p ) }{ \delta }
	\right]
	T
	\right)
	L_1 
	\,
	\delta^{ ( 1 - \nicefrac{ 1 }{ p } ) }
	\left[ 
	\int_0^T
	\E\Big[
	\big\| 
	B ( W_{ s - \chi(s) } )
	\big\|^p
	\Big]
	\,
	ds
	\right]^{ \nicefrac{ 1 }{ p } }
	.
	\end{split}
	\end{equation}
	Therefore, we obtain that
	\begin{equation}
	\begin{split}
	&
	\left(
	\E\!\left[ 
	\left\| 
	X_T - Y_T
	\right\|^p
	\right]
	\right)^{ \nicefrac{ 1 }{ p } }
	\\ & \leq
	\exp\!\left( 
	\left[ 
	L_1 
	+
	\tfrac{ ( 1 - 1 / p ) }{ \delta }
	\right]
	T
	\right)
	\left\| \xi - \phi_2( \xi ) \right\|
	\\ & \quad
	+
	\varepsilon_1 
	\,
	\delta^{ ( 1 - \nicefrac{ 1 }{ p } ) }
	\, 
	T^{ \nicefrac{ 1 }{ p } }
	\exp\!\left( 
	\left[ 
	L_1 
	+
	\tfrac{ ( 1 - 1 / p ) }{ \delta }
	\right]
	T
	\right)
	\left[
	1
	+
	\sup_{ t \in [0,T] }
	\big(
	\E\big[
	\| Y_t \|^{ p \varsigma_1 }
	\big]
	\big)^{ \nicefrac{ 1 }{ p } }
	\right]
	\\ & \quad
	+
	h 
	\,
	\delta^{ ( 1 - \nicefrac{ 1 }{ p } ) }
	\,
	T^{ \nicefrac{ 1 }{ p } }
	\,
	L_1 
	\exp\!\left( 
	\left[ 
	L_1 
	+
	\tfrac{ ( 1 - 1 / p ) }{ \delta }
	\right]
	T
	\right)
	\left[
	\sup_{ t \in [0,T] }
	\big( 
	\E\big[
	\|
	\phi_1( Y_t )
	\|^p
	\big]
	\big)^{ \nicefrac{ 1 }{ p } }
	\right]
	\\ & \quad
	+
	\delta^{ ( 1 - \nicefrac{ 1 }{ p } ) }
	\,
	T^{ \nicefrac{ 1 }{ p } }
	\,
	L_1 
	\exp\!\left( 
	\left[ 
	L_1 
	+
	\tfrac{ ( 1 - 1 / p ) }{ \delta }
	\right]
	T
	\right)
	\big(
	\E\big[
	\| 
	B W_h 
	\|^p
	\big]
	\big)^{ \nicefrac{ 1 }{ p } }
	.
	\end{split} 
	\end{equation}
	Combining this with \eqref{eq:after_Hoelder} demonstrates that
	\begin{align*}
	&
	\big| \E\big[ f_0( X_T ) \big] - \E\big[ \phi_0( Y_T ) \big] \big|
	\\ & 
	\leq
	\varepsilon_0 
	\left(
	1 
	+
	\E\!\left[ 
	\| X_T \|^{ \varsigma_0 }
	\right]
	\right)
	\\ & \quad
	+
	L_0 
	\left(
	1 
	+
	\left[
	\frac{ 
		2^{ \max\{ \ell - 1, 0 \} }
	}{
		( \ell + 1 )
	}
	\right]
	\left[
	\left(
	\E\big[
	\| X_T \|^{ \ell q }
	\big]
	\right)^{ \nicefrac{ 1 }{ q } }
	+
	\left(
	\E\big[
	\| Y_T \|^{ \ell q }
	\big]
	\right)^{ \nicefrac{ 1 }{ q } }
	\right]
	\right) \numberthis
	\\ & \quad \cdot 
	\exp\!\left( 
	\left[ 
	L_1 
	+
	\tfrac{ ( 1 - 1 / p ) }{ \delta }
	\right]
	T
	\right)
	\Bigg[
	\left\| \xi - \phi_2( \xi ) \right\|
	+
	\varepsilon_1 
	\,
	\delta^{ ( 1 - \nicefrac{ 1 }{ p } ) }
	\, 
	T^{ \nicefrac{ 1 }{ p } }
	\!
	\left[
	1
	+
	\sup_{ t \in [0,T] }
	\big(
	\E\big[
	\| Y_t \|^{ p \varsigma_1 }
	\big]
	\big)^{ \nicefrac{ 1 }{ p } }
	\right]
	\\ & \quad
	+
	h
	\,
	\delta^{ ( 1 - \nicefrac{ 1 }{ p } ) }
	\,
	T^{ \nicefrac{ 1 }{ p } }
	L_1 
	\!
	\left[
	\sup_{ t \in [0,T] }
	\big( 
	\E\big[
	\|
	\phi_1( Y_t )
	\|^p
	\big]
	\big)^{ \nicefrac{ 1 }{ p } }
	\right]
	+
	\delta^{ ( 1 - \nicefrac{ 1 }{ p } ) }
	\,
	T^{ \nicefrac{ 1 }{ p } }
	L_1 
	\big(
	\E\big[
	\| 
	B W_h 
	\|^p
	\big]
	\big)^{ \nicefrac{ 1 }{ p } }
	\Bigg]
	.
	\end{align*}
	The proof of Lemma~\ref{lem:perturbation_PDE} is thus completed.
\end{proof}

\begin{prop}
	\label{prop:perturbation_PDE_2}
	Let $ d, m \in \N $, $ \xi \in \R^d $, 
	$ T \in (0,\infty) $,
	$ c, C, \varepsilon_0, \varepsilon_1, \varepsilon_2, \varsigma_0, \varsigma_1, \varsigma_2, L_0, L_1, $ $\ell \in [0,\infty) $, 
	$ h \in [0,T] $,
	$ B \in \R^{ d \times m } $,
	$p \in [2, \infty)$,
	$q \in (1, 2]$
	satisfy
	$
	\nicefrac{ 1 }{ p } + \nicefrac{ 1 }{ q } = 1
	$,
	let $ \left\| \cdot \right\| \colon \R^d \to [0,\infty) $ be the $ d $-dimensional Euclidean norm, 
	let $ ( \Omega, \mathcal{F}, \P ) $ be a probability space, 
	let $ W \colon [0,T] \times \Omega \to \R^m $ be a standard Brownian motion, 
	let 
	$ \phi_0 \colon \R^d \to \R $, 
	$ f_1 \colon \R^d \to \R^d $, 
	$ \phi_2 \colon \R^d \to \R^d $, 
	and
	$ \chi \colon [0,T] \to [0,T] $ be functions,
	let 
	$ f_0 \colon \R^d \to \R $
	be a
	$ \mathcal{B}( \R^d ) $/$ \mathcal{B}( \R ) $-measurable function,
	let
	$ \phi_1 \colon \R^d \to \R^d $
	be a $ \mathcal{B}( \R^d ) $/$ \mathcal{B}( \R^d ) $-measurable function, 
	assume that 
	$
	\| \xi - \phi_2( \xi ) \|
	\leq
	\varepsilon_2 ( 1 + \| \xi \|^{ \varsigma_2 } )
	$,
	assume for all 
	$ t \in [0,T] $, $ x, y \in \R^d $ that
	\begin{equation}
	| \phi_0( x ) - f_0( x ) |
	\leq 
	\varepsilon_0 
	( 1 + \| x \|^{ \varsigma_0 } )
	,
	\qquad
	\| \phi_1( x ) - f_1( x ) \|
	\leq 
	\varepsilon_1
	( 1 + \| x \|^{ \varsigma_1 } )
	,
	\end{equation}
	\begin{equation}
	| \phi_0( x ) - \phi_0( y ) | 
	\leq 
	L_0
	\left(
	1 
	+ 
	\int_0^1
	\big[
	r \| x \| + ( 1 - r ) \| y \|
	\big]^{ \ell }
	\,
	dr
	\right)
	\left\| x - y \right\| ,
	\end{equation}
	\begin{equation}
	\| f_1( x ) - f_1( y ) \| \leq L_1 \| x - y \| 
	,
	\qquad
	\chi( t ) 
	= 
	\max\!\left(
	\{ 0, h, 2 h , \dots \}
	\cap [0,t] 
	\right)
	,
	\end{equation}
	and 
	$
	\| \phi_1( x ) \|
	\leq 
	C + c \| x \|
	$,
	let $ \varpi_r \in \R $, $ r \in (0,\infty) $, 
	satisfy for all $ r \in (0,\infty) $ that
	$
	\varpi_r = 
	\big(
	\E\big[ 
	\| B W_T \|^r
	\big]
	\big)^{ \nicefrac{1}{r} }   
	$,
	and 
	let $ X, Y \colon [0,T] \times \Omega \to \R^d $ be stochastic processes with 
	continuous sample paths which satisfy
	for all $ t \in [0,T] $ that 
	$
	Y_t = \phi_2( \xi ) + \int_0^t \phi_1\big( Y_{ \chi( s ) } \big) \, ds + B W_t
	$
	and
	\begin{equation}
	X_t = \xi + \int_0^t f_1( X_s ) \, ds + B W_t
	.
	\end{equation}
	Then it holds that
	\begin{equation}
	\begin{split}
	&
	\big| \E\big[ f_0( X_T ) \big] - \E\big[ \phi_0( Y_T ) \big] \big|
	\leq 
	\left[
	\varepsilon_0
	+
	\varepsilon_1
	+
	\varepsilon_2
	+
	( h / T )^{ \nicefrac{ 1 }{ 2 } } 
	\right]
	\\ & \cdot 
	e^{
		( 
		\ell + 3 + 2 L_1 +
		\left[ 
		\ell
		\max\{ 
		L_1, 
		c
		\} 
		+
		c
		\max\{ 
		\varsigma_1 ,
		1
		\}
		+
		L_1
		\max\{ \varsigma_0, 1 \}
		+
		2
		\right]
		T
		)
	}
	\big[ 
	\| \xi \|
	+
	\max\{ 1, \varepsilon_2 \}
	( 1 + \| \xi \|^{ \varsigma_2 } )
	\\ &
	+ 
	\max\{ 1, C, \| f_1( 0 ) \| \} \max\{ 1, T \} 
	+ 
	\varpi_{ \max\{ \varsigma_0, \varsigma_1 p, p, \ell q \} } 
	\big]^{ \max\{ 1, \varsigma_0, \varsigma_1 \} + \ell }
	\max\{ 1, L_0 \}
	.
	\end{split}
	\end{equation}
\end{prop}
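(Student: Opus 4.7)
The strategy is to apply Lemma~\ref{lem:perturbation_PDE} directly and then control every moment that appears on its right-hand side using the a~priori bounds in Lemmas~\ref{lem:sde-lp-bound} and~\ref{l:exp.Gauss}. A convenient choice is $\delta = 1$, which makes the factor $\delta^{(1-1/p)}$ disappear and collapses the Gronwall prefactor to $\exp((L_1 + 1 - 1/p) T)$. The bound produced by Lemma~\ref{lem:perturbation_PDE} then has summands proportional to $\varepsilon_0$, $\varepsilon_1$, $\|\xi - \phi_2(\xi)\|$, and $(\E[\|B W_h\|^p])^{1/p}$. The new assumption $\|\xi - \phi_2(\xi)\| \leq \varepsilon_2(1 + \|\xi\|^{\varsigma_2})$ immediately converts the third of these into an $\varepsilon_2$-term, and Lemma~\ref{l:exp.Gauss} gives $(\E[\|B W_h\|^p])^{1/p} \leq \sqrt{\max\{1,p-1\}\operatorname{Trace}(B^{\ast}B)\,h} = \sqrt{\max\{1,p-1\}} \, \varpi_2 \sqrt{h/T}$, producing the clean $(h/T)^{\nicefrac{1}{2}}$ factor after multiplication by the $T^{\nicefrac{1}{p}}$ already present.

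For the remaining moment factors I would apply Lemma~\ref{lem:sde-lp-bound} separately to $X$ and to $Y$. The Lipschitz hypothesis on $f_1$ yields the linear-growth bound $\|f_1(x)\| \leq \|f_1(0)\| + L_1 \|x\|$, and Lemma~\ref{lem:sde-lp-bound} (with $\chi = \id_{[0,T]}$) gives $\sup_{t \in [0,T]}(\E[\|X_t\|^r])^{\nicefrac{1}{r}} \leq (\|\xi\| + T\|f_1(0)\| + \varpi_r) e^{L_1 T}$ for every $r \in [1,\infty)$. For $Y$, the hypothesis $\|\phi_1(x)\| \leq C + c\|x\|$ yields $\sup_{t \in [0,T]}(\E[\|Y_t\|^r])^{\nicefrac{1}{r}} \leq (\|\phi_2(\xi)\| + CT + \varpi_r) e^{cT}$, where $\|\phi_2(\xi)\| \leq \|\xi\| + \varepsilon_2(1+\|\xi\|^{\varsigma_2})$ by the new hypothesis. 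The factor involving $\phi_1(Y_t)$ is handled by $(\E[\|\phi_1(Y_t)\|^p])^{\nicefrac{1}{p}} \leq C + c (\E[\|Y_t\|^p])^{\nicefrac{1}{p}}$, and Jensen's inequality $\varpi_r \leq \varpi_{r'}$ for $r \leq r'$ lets me replace every occurring $\varpi_r$ by the single quantity $\varpi_{\max\{\varsigma_0, \varsigma_1 p, p, \ell q\}}$ appearing in the conclusion.

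The main obstacle is the lengthy algebraic consolidation rather than any conceptual difficulty. The product of the factor $1 + (\E[\|X_T\|^{\ell q}])^{\nicefrac{1}{q}} + (\E[\|Y_T\|^{\ell q}])^{\nicefrac{1}{q}}$ from Lemma~\ref{lem:perturbation_PDE} with the other moment factors $1 + (\E[\|Y_t\|^{p\varsigma_1}])^{\nicefrac{1}{p}}$, $(\E[\|\phi_1(Y_t)\|^p])^{\nicefrac{1}{p}}$, and the $\varepsilon_2$-term produces a polynomial of total degree at most $\max\{1,\varsigma_0,\varsigma_1\} + \ell$ in the single quantity $\|\xi\| + \max\{1,\varepsilon_2\}(1+\|\xi\|^{\varsigma_2}) + \max\{1,C,\|f_1(0)\|\}\max\{1,T\} + \varpi_{\max\{\varsigma_0, \varsigma_1 p, p, \ell q\}}$. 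Merging all exponential factors $e^{L_1 T}$, $e^{cT}$, $e^{(L_1+1-\nicefrac{1}{p})T}$, the combinatorial constants $\tfrac{2^{\max\{\ell-1,0\}}}{\ell+1} \leq e^{\ell}$, and the elementary estimates $\max\{1, L_0\}$ and $(1-\nicefrac{1}{p}) \leq 1$ into a single exponential with the claimed exponent $\ell + 3 + 2L_1 + [\ell\max\{L_1,c\} + c\max\{\varsigma_1,1\} + L_1\max\{\varsigma_0,1\} + 2]T$ is then a matter of monotone estimation and repeatedly invoking $e^{aT}\,e^{bT} = e^{(a+b)T}$, with no further probabilistic input required.
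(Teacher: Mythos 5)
Your overall strategy is the same as the paper's (apply Lemma~\ref{lem:perturbation_PDE} with $\delta=1$, bound all moments of $X$ and $Y$ via Lemma~\ref{lem:sde-lp-bound}, then consolidate), but two of your concrete estimation steps overshoot the claimed right-hand side, whose constants contain no bare factor of $p$ and no bare factor of $c$, so as written the argument cannot close.

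First, for the Brownian term you invoke Lemma~\ref{l:exp.Gauss} to get $(\E[\|BW_h\|^p])^{\nicefrac{1}{p}}\leq\sqrt{\max\{1,p-1\}}\,\varpi_2\,(h/T)^{\nicefrac{1}{2}}$. The asserted bound has the $(h/T)^{\nicefrac{1}{2}}$-term multiplied only by $e^{(\ldots)}\max\{1,L_0\}\big[\ldots+\varpi_{\max\{\varsigma_0,\varsigma_1 p,p,\ell q\}}\big]^{\max\{1,\varsigma_0,\varsigma_1\}+\ell}$, with no $p$-dependent prefactor, and $\sqrt{p-1}\,\varpi_2$ is in general not dominated by $\varpi_p$ (nor by the bracket when $\max\{1,\varsigma_0,\varsigma_1\}+\ell=1$): if $BW_T$ is a standard $d$-dimensional Gaussian with $d$ much larger than $p$, then $\varpi_2\approx\varpi_p\approx\sqrt{dT}$, so the factor $\sqrt{p-1}$ survives and cannot be absorbed into $p$-free constants. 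The paper instead uses the exact scaling identity $(\E[\|BW_h\|^p])^{\nicefrac{1}{p}}=(h/T)^{\nicefrac{1}{2}}\,\varpi_p$ (since $BW_h$ and $\sqrt{h/T}\,BW_T$ have the same law), which costs nothing. Second, you bound $\sup_{t}(\E[\|\phi_1(Y_t)\|^p])^{\nicefrac{1}{p}}\leq C+c\,\sup_t(\E[\|Y_t\|^p])^{\nicefrac{1}{p}}$; this injects a bare multiplicative $c$ into the $(h/T)^{\nicefrac{1}{2}}$-term, whereas in the claimed estimate $c$ only ever appears inside exponentials of the form $e^{c(\ldots)T}$ (the $T$-free part $\ell+3+2L_1$ of the exponent involves $L_1$ but not $c$). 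For $T$ small, $c$ large, and $\varsigma_1\leq 1$ the term you produce exceeds the asserted right-hand side, so this step would fail. The paper avoids it by splitting $\phi_1(Y_t)=(\phi_1-f_1)(Y_t)+f_1(Y_t)$, so that only $\varepsilon_1(1+\|Y_t\|^{\varsigma_1})$ and $\|f_1(0)\|+L_1\|Y_t\|$ enter; the bare Lipschitz constant $L_1$ is then absorbable through the $T$-independent $2L_1$ in the exponent, while $c$ enters only via the $e^{cT}$ of the a~priori bound for $Y$. With these two corrections your outline coincides with the paper's proof.
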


\begin{proof}[Proof of Proposition~\ref{prop:perturbation_PDE_2}]
	First, observe that Lemma~\ref{lem:perturbation_PDE} shows that
	\begin{equation}
	\begin{split}
	&
	\big| \E\big[ f_0( X_T ) \big] - \E\big[ \phi_0( Y_T ) \big] \big|
	\\ & 
	\leq
	\varepsilon_0 
	\left(
	1 
	+
	\E\!\left[ 
	\| X_T \|^{ \varsigma_0 }
	\right]
	\right)
	\\ & 
	+
	L_0
	\,
	2^{ \max\{ \ell - 1, 0 \} }
	\,
	e^{
		\left[ 
		L_1 
		+
		( 1 - \nicefrac{1}{p} ) 
		\right]
		T
	}
	\left[
	1 
	+
	\left(
	\E\big[
	\| X_T \|^{ \ell q }
	\big]
	\right)^{ \nicefrac{ 1 }{ q } }
	+
	\left(
	\E\big[
	\| Y_T \|^{ \ell q }
	\big]
	\right)^{ \nicefrac{ 1 }{ q } }
	\right]
	\\ & \cdot 
	\Bigg[
	\left\| \xi - \phi_2( \xi ) \right\|
	+
	\varepsilon_1 
	T^{ \nicefrac{ 1 }{ p } }
	\!
	\left[
	1
	+
	\sup_{ t \in [0,T] }
	\big(
	\E\big[
	\| Y_t \|^{ p \varsigma_1 }
	\big]
	\big)^{ \nicefrac{ 1 }{ p } }
	\right]
	\\ &
	+
	h
	T^{ \nicefrac{ 1 }{ p } }
	L_1 
	\!
	\left[
	\sup_{ t \in [0,T] }
	\big( 
	\E\big[
	\|
	\phi_1( Y_t )
	\|^p
	\big]
	\big)^{ \nicefrac{ 1 }{ p } }
	\right]
	+
	T^{ \nicefrac{ 1 }{ p } }
	L_1 
	\big(
	\E\big[
	\| 
	B W_h 
	\|^p
	\big]
	\big)^{ \nicefrac{ 1 }{ p } }
	\Bigg]
	.
	\end{split}
	\end{equation}
	Hence, we obtain that
	\begin{equation}
	\label{eq:last_estimate_in_the}
	\begin{split}
	&
	\big| \E\big[ f_0( X_T ) \big] - \E\big[ \phi_0( Y_T ) \big] \big|
	\\ & 
	\leq
	\varepsilon_0 
	\left(
	1 
	+
	\E\!\left[ 
	\| X_T \|^{ \varsigma_0 }
	\right]
	\right)
	\\ & 
	+
	L_0 
	\,
	2^{ \max\{ \ell - 1, 0 \} }
	\,
	e^{
		\left[ 
		L_1 
		+
		( 1 - \nicefrac{1}{p} ) 
		\right]
		T
	}
	\left[
	1 
	+
	\left(
	\E\big[
	\| X_T \|^{ \ell q }
	\big]
	\right)^{ \nicefrac{ 1 }{ q } }
	+
	\left(
	\E\big[
	\| Y_T \|^{ \ell q }
	\big]
	\right)^{ \nicefrac{ 1 }{ q } }
	\right]
	\\ & \cdot 
	\Bigg[
	\left\| \xi - \phi_2( \xi ) \right\|
	+
	\varepsilon_1 
	T^{ \nicefrac{ 1 }{ p } }
	\!
	\left[
	1
	+
	\sup_{ t \in [0,T] }
	\big(
	\E\big[
	\| Y_t \|^{ p \varsigma_1 }
	\big]
	\big)^{ \nicefrac{ 1 }{ p } }
	\right]
	\\ &
	+
	h
	\,
	T^{ \nicefrac{ 1 }{ p } }
	L_1
	\varepsilon_1
	\!
	\left[
	\sup_{ t \in [0,T] }
	\big( 
	\E\big[
	( 1 + \| Y_t \|^{ \varsigma_1 } )^p
	\big]
	\big)^{ \nicefrac{ 1 }{ p } }
	\right]
	\\ &
	+
	h
	\,
	T^{ \nicefrac{ 1 }{ p } }
	L_1 
	\!
	\left[
	\sup_{ t \in [0,T] }
	\big( 
	\E\big[
	\|
	f_1( Y_t )
	\|^p
	\big]
	\big)^{ \nicefrac{ 1 }{ p } }
	\right]
	+
	L_1 
	\varpi_p
	h^{ \nicefrac{ 1 }{ 2 } }
	T^{ \nicefrac{ 1 }{ p } - \nicefrac{ 1 }{ 2 } }
	\Bigg]
	.
	\end{split}
	\end{equation}
	In addition, note that for all $ x \in \R^d $ it holds that
	\begin{equation}
	\label{eq:f1_linear_growth}
	\left\| f_1( x ) \right\|
	\leq
	\left\| f_1( x ) - f_1( 0 ) \right\|
	+
	\left\| f_1( 0 ) \right\|
	\leq
	\left\| f_1( 0 ) \right\|
	+
	L_1 \| x \|
	.
	\end{equation}
	This and \eqref{eq:last_estimate_in_the} ensure that
	\begin{equation}
	\label{eq:main_estimate_proof}
	\begin{split}
	&
	\big| \E\big[ f_0( X_T ) \big] - \E\big[ \phi_0( Y_T ) \big] \big|
	\\ & 
	\leq
	\varepsilon_0 
	\,
	\big(
	1 
	+
	\E\big[ 
	\| X_T \|^{ \varsigma_0 }
	\big]
	\big)
	\\ & 
	+
	L_0 
	\,
	2^{ \max\{ \ell - 1, 0 \} }
	\,
	e^{
		\left[ 
		L_1 
		+
		1 - \nicefrac{ 1 }{ p }
		\right]
		T
	}
	\left[
	1 
	+
	\left(
	\E\big[
	\| X_T \|^{ \ell q }
	\big]
	\right)^{ \nicefrac{ 1 }{ q } }
	+
	\left(
	\E\big[
	\| Y_T \|^{ \ell q }
	\big]
	\right)^{ \nicefrac{ 1 }{ q } }
	\right]
	\\ & \cdot 
	\Bigg[
	\left\| \xi - \phi_2( \xi ) \right\|
	+
	\varepsilon_1 
	T^{ \nicefrac{ 1 }{ p } }
	\!
	\left[
	1
	+
	\sup_{ t \in [0,T] }
	\big(
	\E\big[
	\| Y_t \|^{ p \varsigma_1 }
	\big]
	\big)^{ \nicefrac{ 1 }{ p } }
	\right]
	\\ &
	+
	h
	T^{ \nicefrac{ 1 }{ p } }
	L_1
	\varepsilon_1
	\!
	\left[
	1 +
	\sup_{ t \in [0,T] }
	\big( 
	\E\big[
	\| Y_t \|^{ p \varsigma_1 }
	\big]
	\big)^{ \nicefrac{ 1 }{ p } }
	\right]
	\\ &
	+
	h
	T^{ \nicefrac{ 1 }{ p } }
	L_1 
	\!
	\left[
	\| f_1( 0 ) \|
	+
	L_1 
	\left[
	\sup_{ t \in [0,T] }
	\big( 
	\E\big[
	\|
	Y_t 
	\|^p
	\big]
	\big)^{ \nicefrac{ 1 }{ p } }
	\right]
	\right]
	+
	L_1 
	\varpi_p
	h^{ \nicefrac{ 1 }{ 2 } }
	T^{ \nicefrac{ 1 }{ p } - \nicefrac{ 1 }{ 2 } }
	\Bigg]
	\\ & =
	\varepsilon_0 
	\,
	\big(
	1 
	+
	\E\big[ 
	\| X_T \|^{ \varsigma_0 }
	\big]
	\big)
	\\ & 
	+
	L_0 \,
	2^{ \max\{ \ell - 1, 0 \} }
	\, 
	e^{
		\left[ 
		L_1 
		+
		1 - \nicefrac{ 1 }{ p }
		\right]
		T
	}
	\left[
	1 
	+
	\left(
	\E\big[
	\| X_T \|^{ \ell q }
	\big]
	\right)^{ \nicefrac{ 1 }{ q } }
	+
	\left(
	\E\big[
	\| Y_T \|^{ \ell q }
	\big]
	\right)^{ \nicefrac{ 1 }{ q } }
	\right]
	\\ & \cdot 
	\Bigg[
	\left\| \xi - \phi_2( \xi ) \right\|
	+
	\varepsilon_1
	T^{ \nicefrac{ 1 }{ p } }
	\left[
	1
	+
	h
	L_1
	\right]
	\left[
	1
	+
	\sup_{ t \in [0,T] }
	\big(
	\E\big[
	\| Y_t \|^{ p \varsigma_1 }
	\big]
	\big)^{ \nicefrac{ 1 }{ p } }
	\right]
	\\ &
	+
	h
	T^{ \nicefrac{ 1 }{ p } }
	L_1 
	\!
	\left[
	\| f_1( 0 ) \|
	+
	L_1 
	\left[
	\sup_{ t \in [0,T] }
	\big( 
	\E\big[
	\|
	Y_t 
	\|^p
	\big]
	\big)^{ \nicefrac{ 1 }{ p } }
	\right]
	\right]
	+
	L_1 
	\varpi_p
	h^{ \nicefrac{ 1 }{ 2 } }
	T^{ \nicefrac{ 1 }{ p } - \nicefrac{ 1 }{ 2 } }
	\Bigg]
	.
	\end{split}
	\end{equation} 
	Next observe that 
	Lemma~\ref{lem:sde-lp-bound} 
	and \eqref{eq:f1_linear_growth} demonstrate that 
	for all $ r \in [1,\infty) $, $ t \in [0,T] $ it holds that
	\begin{equation}
	\begin{split}
	\sup_{ t \in [0,T] }
	\big(
	\E\big[ 
	\| Y_t \|^r
	\big]
	\big)^{ \nicefrac{ 1 }{ r } }
	& \leq
	\left(
	\| \phi_2( \xi ) \| 
	+ 
	C T 
	+ 
	\big( 
	\E\big[ \| B W_T \|^r \big]
	\big)^{ \nicefrac{1}{r} }
	\right)
	e^{ c T } 
	\\ & =
	\left(
	\| \phi_2( \xi ) \| 
	+ 
	C T 
	+ 
	\varpi_r 
	\right)
	e^{ c T } 
	\end{split}
	\end{equation}
	and 
	\begin{equation}
	\begin{split}
	\sup_{ t \in [0,T] }
	\big(
	\E\big[ 
	\| X_t \|^r
	\big]
	\big)^{ \nicefrac{ 1 }{ r } }
	& \leq
	\left(
	\| \xi \| 
	+ 
	\| f_1(0) \| T 
	+ 
	\big( 
	\E\big[ \| B W_T \|^r \big]
	\big)^{ \nicefrac{1}{r} }
	\right)
	e^{ L_1 T }
	\\ & =
	\left(
	\| \xi \| 
	+ 
	\| f_1(0) \| T 
	+ 
	\varpi_r 
	\right)
	e^{ L_1 T }
	.
	\end{split}
	\end{equation}
	Combining this with \eqref{eq:main_estimate_proof} shows that
	\begin{align*}
	&
	\big| \E\big[ f_0( X_T ) \big] - \E\big[ \phi_0( Y_T ) \big] \big|
	\\ & \leq
	\varepsilon_0 
	\,
	\Big(
	1 
	+
	\big[ 
	\| \xi \| 
	+ 
	\| f_1(0) \| T 
	+ 
	\varpi_{ \max\{ \varsigma_0, 1 \} } 
	\big]^{ \varsigma_0 }
	e^{ \varsigma_0 L_1 T }
	\Big)
	+
	L_0 
	\,
	2^{ \max\{ \ell - 1, 0 \} }
	\,
	e^{
		\left[ 
		L_1 
		+
		1 - \nicefrac{ 1 }{ p }
		\right]
		T
	}
	\\ &
	\cdot 
	\Big(
	1 
	+
	\big[ 
	\| \xi \| 
	+ 
	\| f_1(0) \| T 
	+ 
	\varpi_{ \max\{ \ell q, 1 \} } 
	\big]^{ \ell }
	\,
	e^{ \ell L_1 T }
	+
	\big[ 
	\| \phi_2( \xi ) \| 
	+ 
	C T 
	+ 
	\varpi_{ \max\{ \ell q, 1 \} }
	\big]^{ \ell }
	\,
	e^{ \ell c T }
	\Big)
	\\ & \cdot 
	\Bigg[
	\left\| \xi - \phi_2( \xi ) \right\|
	+
	\varepsilon_1
	T^{ \nicefrac{ 1 }{ p } }
	[
	1
	+
	h
	L_1
	]
	\Big(
	1
	+
	\big[ 
	\| \phi_2( \xi ) \| 
	+ 
	C T 
	+ 
	\varpi_{ \max\{ \varsigma_1 p, 1 \} } 
	\big]^{ \varsigma_1 }
	e^{ \varsigma_1 c T }
	\Big)
	\\ &
	+
	h
	T^{ \nicefrac{ 1 }{ p } }
	L_1 
	\!
	\left(
	\| f_1( 0 ) \|
	+
	L_1 
	\big[ 
	\| \phi_2( \xi ) \| 
	+ 
	C T 
	+ 
	\varpi_p 
	\big]
	e^{ c T }
	\right)
	+
	L_1 
	\varpi_p
	h^{ \nicefrac{ 1 }{ 2 } }
	T^{ \nicefrac{ 1 }{ p } - \nicefrac{ 1 }{ 2 } }
	\Bigg] \numberthis
	.
	\end{align*}
	Hence, we obtain that
	\begin{equation}
	\begin{split}
	&
	\big| \E\big[ f_0( X_T ) \big] - \E\big[ \phi_0( Y_T ) \big] \big|
	\\ & \leq
	\varepsilon_0 
	\,
	\Big(
	1 
	+
	\big[ 
	\| \xi \| 
	+ 
	\| f_1(0) \| T 
	+ 
	\varpi_{ \max\{ \varsigma_0, 1 \} } 
	\big]^{ \varsigma_0 }
	\,
	e^{ \varsigma_0 L_1 T }
	\Big)
	\\ &
	+
	L_0 
	\,
	2^{ \max\{ \ell - 1, 0 \} }
	\,
	e^{
		\left[ 
		\max\{ 
		\ell L_1, 
		\ell c
		\} 
		+
		c
		\max\{ 
		\varsigma_1 ,
		1
		\}
		+
		L_1 
		+
		1 - \nicefrac{ 1 }{ p }
		\right]
		T
	}
	\max\{ 1, T^{ \nicefrac{ 1 }{ p } } \}
	\\ & 
	\cdot
	\Big(
	1 
	+
	\big[ 
	\| \xi \| 
	+ 
	\| f_1(0) \| T 
	+ 
	\varpi_{ \max\{ \ell q, 1 \} } 
	\big]^{ \ell }
	+
	\big[ 
	\| \phi_2( \xi ) \| 
	+ 
	C T 
	+ 
	\varpi_{ \max\{ \ell q, 1 \} } 
	\big]^{ \ell }
	\Big)
	\\ & \cdot 
	\Bigg[
	\left\| \xi - \phi_2( \xi ) \right\|
	+
	\varepsilon_1
	[
	1
	+
	h
	L_1
	]
	\Big(
	1
	+
	\big[ 
	\| \phi_2( \xi ) \| 
	+ 
	C T 
	+ 
	\varpi_{ \max\{ \varsigma_1 p, 1 \} } 
	\big]^{ \varsigma_1 }
	\Big)
	\\ &
	+
	h
	L_1 
	\Big(
	\| f_1( 0 ) \|
	+
	L_1 
	\big[ 
	\| \phi_2( \xi ) \| 
	+ 
	C T 
	+ 
	\varpi_p 
	\big]
	\Big)
	+
	( h / T )^{ \nicefrac{ 1 }{ 2 } }
	L_1 
	\varpi_p
	\Bigg]
	.\end{split}
	\end{equation}
	This implies that
	\begin{align*}
	&
	\big| \E\big[ f_0( X_T ) \big] - \E\big[ \phi_0( Y_T ) \big] \big|
	\\ & \leq
	\varepsilon_0 
	\,
	\Big(
	1 
	+
	\big[ 
	\| \xi \| 
	+ 
	\| f_1(0) \| T 
	+ 
	\varpi_{ \max\{ \varsigma_0, 1 \} } 
	\big]^{ \varsigma_0 }
	\,
	e^{ \varsigma_0 L_1 T }
	\Big)
	\\ &
	+
	L_0 
	\,
	2^{ \max\{ \ell - 1, 0 \} }
	\,
	e^{
		\left[ 
		\max\{ 
		\ell L_1, 
		\ell c
		\} 
		+
		c
		\max\{ 
		\varsigma_1 ,
		1
		\}
		+
		L_1 
		+
		1
		\right]
		T
	} \numberthis
	\\ & 
	\cdot
	\Big(
	1 
	+
	\big[ 
	\| \xi \| 
	+ 
	\| f_1(0) \| T 
	+ 
	\varpi_{ \max\{ \ell q, 1 \} } 
	\big]^{ \ell }
	+
	\big[ 
	\| \xi \|
	+
	\varepsilon_2 
	( 1 + \| \xi \|^{ \varsigma_2 } )
	+ 
	C T 
	+ 
	\varpi_{ \max\{ \ell q, 1 \} } 
	\big]^{ \ell }
	\Big)
	\\ & \cdot 
	\Bigg[
	\varepsilon_2 
	( 1 + \| \xi \|^{ \varsigma_2 } )
	+
	\varepsilon_1
	[
	1
	+
	T
	L_1
	]
	\Big(
	1
	+
	\big[ 
	\| \xi \|
	+
	\varepsilon_2 
	( 1 + \| \xi \|^{ \varsigma_2 } )
	+ 
	C T 
	+ 
	\varpi_{ \max\{ \varsigma_1 p, 1 \} } 
	\big]^{ \varsigma_1 }
	\Big)
	\\ &
	+
	( h / T )^{ \nicefrac{ 1 }{ 2 } } 
	T
	L_1 
	\Big(
	\| f_1( 0 ) \|
	+
	L_1 
	\big[ 
	\| \phi_2( \xi ) \| 
	+ 
	C T 
	+ 
	\varpi_p 
	\big]
	\Big)
	+
	( h / T )^{ \nicefrac{ 1 }{ 2 } }
	L_1 
	\varpi_p
	\Bigg]
	.
	\end{align*}
	Therefore, we obtain that
	\begin{align*}
	&
	\big| \E\big[ f_0( X_T ) \big] - \E\big[ \phi_0( Y_T ) \big] \big|  \numberthis
	\\ & \leq
	\varepsilon_0 
	\,
	\Big(
	1 
	+
	\big[ 
	\| \xi \| 
	+ 
	\| f_1(0) \| T 
	+ 
	\varpi_{ \max\{ \varsigma_0, 1 \} } 
	\big]^{ \varsigma_0 }
	\,
	e^{ \varsigma_0 L_1 T }
	\Big)
	\\ &
	+
	L_0 
	\,
	2^{ \max\{ \ell - 1, 0 \} }
	\,
	e^{
		\left[ 
		\max\{ 
		\ell L_1, 
		\ell c
		\} 
		+
		c
		\max\{ 
		\varsigma_1 ,
		1
		\}
		+
		L_1 
		+
		1
		\right]
		T
	}
	\\ & 
	\cdot
	\Big(
	1 
	+
	\big[ 
	\| \xi \| 
	+ 
	\| f_1(0) \| T 
	+ 
	\varpi_{ \max\{ \ell q, 1 \} } 
	\big]^{ \ell }
	+
	\big[ 
	\| \xi \|
	+
	\varepsilon_2 
	( 1 + \| \xi \|^{ \varsigma_2 } )
	+ 
	C T 
	+ 
	\varpi_{ \max\{ \ell q, 1 \} } 
	\big]^{ \ell }
	\Big)
	\\ & \cdot 
	\Bigg[
	\left[
	\varepsilon_1
	+
	\varepsilon_2
	\right]
	\max\{ 1, T \}
	[
	1
	+
	L_1
	]
	\\ & \cdot
	\Big(
	1
	+
	\big[ 
	\| \xi \|
	+
	\max\{ 1, \varepsilon_2 \}
	( 1 + \| \xi \|^{ \varsigma_2 } )
	+ 
	C T 
	+ 
	\varpi_{ \max\{ \varsigma_1 p, 1 \} } 
	\big]^{ \max\{ 1, \varsigma_1 \} }
	\Big)
	\\ &
	+
	( h / T )^{ \nicefrac{ 1 }{ 2 } }
	\max\{ 1, T \}
	[ 1 + L_1 ]
	\Big(
	\| f_1( 0 ) \| 
	+
	L_1 
	\big[ 
	\| \xi \|
	+
	\varepsilon_2 
	( 1 + \| \xi \|^{ \varsigma_2 } )
	+ 
	C T 
	+ 
	\varpi_p 
	\big]
	\Big)
	\Bigg]
	.
	\end{align*}
	This and the fact that $\forall \, x \in [0, \infty) \colon \max\{x, 1\} \leq x+1 \leq e^x$ demonstrate that
	\begin{align*}
	&
	\big| \E\big[ f_0( X_T ) \big] - \E\big[ \phi_0( Y_T ) \big] \big|
	\\ & \leq
	\varepsilon_0 
	\,
	\Big(
	1 
	+
	\big[ 
	\| \xi \| 
	+ 
	\| f_1(0) \| T 
	+ 
	\varpi_{ \max\{ \varsigma_0, 1 \} } 
	\big]^{ \varsigma_0 }
	\,
	e^{ \varsigma_0 L_1 T }
	\Big)
	\\ &
	+
	L_0 
	\,
	2^{ \max\{ \ell - 1, 0 \} }
	\,
	e^{
		( L_1
		+
		\left[ 
		\max\{ 
		\ell L_1, 
		\ell c
		\} 
		+
		c
		\max\{ 
		\varsigma_1 ,
		1
		\}
		+
		L_1 
		+
		2
		\right]
		T
		)
	}
	\left[
	\varepsilon_1
	+
	\varepsilon_2
	+
	( h / T )^{ \nicefrac{ 1 }{ 2 } } 
	\right]  \numberthis
	\\ & 
	\cdot
	\Big(
	1 
	+
	\big[ 
	\| \xi \| 
	+ 
	\| f_1(0) \| T 
	+ 
	\varpi_{ \max\{ \ell q, 1 \} } 
	\big]^{ \ell }
	+
	\big[ 
	\| \xi \|
	+
	\varepsilon_2 
	( 1 + \| \xi \|^{ \varsigma_2 } )
	+ 
	C T 
	+ 
	\varpi_{ \max\{ \ell q, 1 \} } 
	\big]^{ \ell }
	\Big)
	\\ & \cdot 
	\Big[
	\max\{ 1, \| f_1(0) \| \}
	\\ &
	+
	\max\{ 1, L_1 \}
	\big[ 
	\| \xi \|
	+
	\max\{ 1, \varepsilon_2 \}
	( 1 + \| \xi \|^{ \varsigma_2 } )
	+ 
	C T 
	+ 
	\varpi_{ \max\{ p, \varsigma_1 p \} } 
	\big]^{ \max\{ 1, \varsigma_1 \} }
	\Big]
	.
	\end{align*}
	Hence, we obtain that
	\begin{align*}
	&
	\big| \E\big[ f_0( X_T ) \big] - \E\big[ \phi_0( Y_T ) \big] \big|
	\\ & \leq
	2^{ \max\{ \ell - 1, 0 \} }
	\,
	e^{
		( L_1
		+
		\left[ 
		\max\{ 
		\ell L_1, 
		\ell c
		\} 
		+
		c
		\max\{ 
		\varsigma_1 ,
		1
		\}
		+
		\max\{ \varsigma_0, 1 \} L_1 
		+
		2
		\right]
		T
		)
	}
	\left[
	\varepsilon_0
	+
	\varepsilon_1
	+
	\varepsilon_2
	+
	( h / T )^{ \nicefrac{ 1 }{ 2 } } 
	\right]
	\\ & 
	\cdot
	\Big(
	1 
	+
	\big[ 
	\| \xi \| 
	+ 
	\| f_1(0) \| T 
	+ 
	\varpi_{ \max\{ \ell q, 1 \} } 
	\big]^{ \ell }
	+
	\big[ 
	\| \xi \|
	+
	\varepsilon_2 
	( 1 + \| \xi \|^{ \varsigma_2 } )
	+ 
	C T 
	+ 
	\varpi_{ \max\{ \ell q, 1 \} } 
	\big]^{ \ell }
	\Big)
	\\ & \cdot 
	\max\{ 1, L_0 \}
	\Big[
	\max\{ 1, \| f_1(0) \| \}
	+
	\max\{ 1, L_1 \}
	\big[ 
	\| \xi \|
	+
	\max\{ 1, \varepsilon_2 \}
	( 1 + \| \xi \|^{ \varsigma_2 } )
	\\ &
	+ 
	\max\{ C, \| f_1( 0 ) \| \} T 
	+ 
	\varpi_{ \max\{ p, \varsigma_1 p, \varsigma_0 \} } 
	\big]^{ \max\{ 1, \varsigma_0, \varsigma_1 \} }
	\Big]  \numberthis
	.
	\end{align*}
	This and the fact that $\forall \, x \in [0, \infty) \colon \max\{x, 1\} \leq x+1 \leq e^x$ show that
	\begin{align*}
	&
	\big| \E\big[ f_0( X_T ) \big] - \E\big[ \phi_0( Y_T ) \big] \big|
	\\ & \leq
	2^{ \max\{ \ell, 1 \} }
	\,
	e^{
		( 
		2 L_1 +
		\left[ 
		\ell
		\max\{ 
		L_1, 
		c
		\} 
		+
		c
		\max\{ 
		\varsigma_1 ,
		1
		\}
		+
		L_1 \max\{ \varsigma_0, 1 \}
		+
		2
		\right]
		T
		)
	}
	\left[
	\varepsilon_0
	+
	\varepsilon_1
	+
	\varepsilon_2
	+
	( h / T )^{ \nicefrac{ 1 }{ 2 } } 
	\right]
	\\ & 
	\cdot
	\Big(
	1 
	+
	\big[ 
	\| \xi \| 
	+ 
	\| f_1(0) \| T 
	+ 
	\varpi_{ \max\{ \ell q, 1 \} } 
	\big]^{ \ell }
	+
	\big[ 
	\| \xi \|
	+
	\varepsilon_2 
	( 1 + \| \xi \|^{ \varsigma_2 } )
	+ 
	C T 
	+ 
	\varpi_{ \max\{ \ell q, 1 \} } 
	\big]^{ \ell }
	\Big)
	\\ & \cdot 
	\max\{ 1, L_0 \}
	\big[ 
	\| \xi \|
	+
	\max\{ 1, \varepsilon_2 \}
	( 1 + \| \xi \|^{ \varsigma_2 } )
	\\ &
	+ 
	\max\{ 1, C, \| f_1( 0 ) \| \} \max\{ 1, T \} 
	+ 
	\varpi_{ \max\{ \varsigma_0, \varsigma_1 p, p \} } 
	\big]^{ \max\{ 1, \varsigma_0, \varsigma_1 \} }  \numberthis
	.
	\end{align*}
	Therefore, we obtain that
	\begin{equation}
	\begin{split}
	&
	\big| \E\big[ f_0( X_T ) \big] - \E\big[ \phi_0( Y_T ) \big] \big|
	\\ & \leq
	e^{
		( 
		\ell + 3 + 2 L_1 +
		\left[ 
		\ell
		\max\{ 
		L_1, 
		c
		\} 
		+
		c
		\max\{ 
		\varsigma_1 ,
		1
		\}
		+
		L_1 \max\{ \varsigma_0, 1 \}
		+
		2
		\right]
		T
		)
	}
	\\ & \cdot 
	\left[
	\varepsilon_0
	+
	\varepsilon_1
	+
	\varepsilon_2
	+
	( h / T )^{ \nicefrac{ 1 }{ 2 } } 
	\right]
	\max\{ 1, L_0 \}
	\big[ 
	\| \xi \|
	+
	\max\{ 1, \varepsilon_2 \}
	( 1 + \| \xi \|^{ \varsigma_2 } )
	\\ &
	+ 
	\max\{ 1, C, \| f_1( 0 ) \| \} \max\{ 1, T \} 
	+ 
	\varpi_{ \max\{ \varsigma_0, \varsigma_1 p, p, \ell q \} } 
	\big]^{ \max\{ 1, \varsigma_0, \varsigma_1 \} + \ell }
	.
	\end{split}
	\end{equation}
	The proof of Proposition~\ref{prop:perturbation_PDE_2} is thus completed.
\end{proof}

\section{Deep artificial neural network (DNN) calculus }
\label{sec:dnn:calculus}

In Section~\ref{sec:DNN_PDEs} below we establish the existence of a DNN approximating the solution of the PDE without the curse of dimensionality. To demonstrate the existence of such a DNN, we need a few properties about representation flexibilities of DNNs, which we establish in this section. In particular, we state in the elementary and essentially well-known result in Lemma~\ref{lem:sum:ANN} in Subsection~\ref{sec:sum:ANN} below that every linear combination of realizations of DNNs with the same architecture is again a realization of a suitable DNN.  Similar results to Lemma~\ref{lem:sum:ANN} can, e.g., be found in~Yarotsky~\cite{yarotsky2017error}.

Moreover, in Proposition~\ref{prop:composition:ANN} in Subsection~\ref{sec:composition:ANN} below we demonstrate under suitable hypotheses that the composition of the realizations of two DNNs is again a realization of a suitable DNN and the number of parameters of this suitable DNN grows at most additively in the number of parameters of the composed DNNs.
For the construction of this suitable DNN in Proposition~\ref{prop:composition:ANN} we plug an artificial identity in between the two DNNs and for this we employ in Proposition~\ref{prop:composition:ANN} the hypothesis that the identity can within the class of considered fully-connected neural networks (see \eqref{eq:composition:ANN_class}--\eqref{eq:composition:ANN_realization} in Proposition~\ref{prop:composition:ANN} below) be described by a suitable flat artificial neural network.
In Proposition~\ref{prop:composition:ANN}
the tuples $ \phi_1 $ and $ \phi_2 $ represent the DNNs which we intend to compose
(where the realization of $ \phi_1 $ is a function from $ \R^{ d_2 } $ to $ \R^{ d_3 } $
and where the realization of $ \phi_2 $ is a function from $ \R^{ d_1 } $ to $ \R^{ d_2 } $),
the tuple $ \mathbb{I} $ represents the artificial neural network
which describes the identity on $ \R^{ d_2 } $,
and the tuple $ \psi $ represents the DNN whose realization coincides with the composition of the realizations of $ \phi_1 $ and $ \phi_2 $
(the realization of $ \psi $ is thus a function from $ \R^{ d_1 } $ to $ \R^{ d_3 } $).
The hypothesis of the existence of the artificial neural network $ \mathbb{I} $ can, roughly speaking, be viewed as a hypothesis on the activation function $ \mathbf{a} \colon \R \to \R $ used in Proposition~\ref{prop:composition:ANN}.
Proposition~\ref{prop:composition:ANN}, loosely speaking, then asserts that the number of parameters of $ \psi $ can up to a constant be bounded by the sum of the number of parameters of $ \phi_1 $ and of the number of parameters of $ \phi_2 $.
A straightforward DNN construction of the composition of $ \phi_1 $ and $ \phi_2 $
(without artificially plugging the identity on $ \R^{ d_2 } $ in between $ \phi_1 $ and $ \phi_2 $) would possibly result in a DNN whose number of parameters is essentially equal to the product of the number of parameters of $ \phi_1 $ and of the number of parameters of $ \phi_2 $.
Such a construction, in turn, would in our proof of the main result of this article (Theorem~\ref{thm:PDE_approx_Lp} below)
not allow us to conclude that DNNs do indeed overcome the curse of dimensionality in the numerical approximation of the considered PDEs
(see~\eqref{eq:bar:psi} in the proof of Proposition~\ref{prop:PDE_approx_Lp} for details).
Moreover, in Proposition~\ref{prop:sum:comp:ANN} in Subsection~\ref{sec:composition:ANN} below
we establish under similar hypotheses as in Proposition~\ref{prop:composition:ANN}
a result similar to Proposition~\ref{prop:composition:ANN}
which is tailor-made to the DNNs which we design in the proof of our main result in Theorem~\ref{thm:PDE_approx_Lp} below.
In particular, \eqref{eq:sum:comp:ann} in Proposition~\ref{prop:sum:comp:ANN} is tailor-made to construct a DNN which is based on an Euler discretization of a (stochastic) differential equation.
We refer to~\eqref{eq:Y_processes} and~\eqref{eq:sum:comp:ANN} in the proof of Proposition~\ref{prop:PDE_approx_Lp} below for further details. 

To apply Proposition~\ref{prop:composition:ANN} and Proposition~\ref{prop:sum:comp:ANN}, respectively, we need to verify that the class of considered DNNs does indeed enjoy the property to be able to represent the identity on $ \R^{ d_2 } $.
Fortunately, DNNs with the rectifier function as the activation function do indeed admit this property.
This fact is verified in the elementary result in Lemma~\ref{lem:identity} in Subsection~\ref{sec:rep_identity} below.
In particular, Lemma~\ref{lem:identity} shows for every $ d \in \N $ that the $ d $-dimensional identity can be explicitly represented by a suitable rectified flat  artificial neural network
(with one hidden layer with $ 2d $ neurons and the rectifier function as the activation function in front of the $ 2d $-dimensional hidden layer).


\subsection{Sums of DNNs with the same architecture}
\label{sec:sum:ANN}

\begin{lemma}
\label{lem:sum:ANN}
Let 
$ \mathbf{A}_n \colon \R^n \to \R^n $, 
$ n \in \N $, 
and 
$ \mathbf{a} \colon \R \to \R$
be continuous functions 
which satisfy 
for all 
$
n \in \N
$,
$ x = ( x_1, \dots, x_n ) \in \R^n $
that
$ 
\mathbf{A}_n(x)
=
( \mathbf{a}(x_1), \ldots, \mathbf{a}(x_n) )
$,
let 
\begin{equation}
\mathcal{N}
=
\cup_{ L \in \{ 2, 3, 4, \dots \} }
\cup_{ ( l_0, l_1, \ldots, l_L ) \in \N^{ L + 1 } }
(
\times_{ n = 1 }^L 
(
\R^{ l_n \times l_{ n - 1 } } \times \R^{ l_n } 
)
)
,
\end{equation}
let 
$
\mathcal{P}
\colon \mathcal{N} \to \N
$ 
and
$
\mathcal{R} \colon 
\mathcal{N} 
\to 
\cup_{ k, l \in \N } C( \R^k, \R^l )
$
be the functions which satisfy 
for all 
$ L \in \{ 2, 3, 4, \dots \} $, 
$ l_0, l_1, \ldots, l_L \in \N $, 
$ 
\Phi = ((W_1, B_1), \ldots, (W_L, B_L)) \in 
( \times_{ n = 1 }^L (\R^{ l_n \times l_{n-1} } \times \R^{ l_n } ) )
$,
$ x_0 \in \R^{l_0} $, 
$ \ldots $, 
$ x_{ L - 1 } \in \R^{ l_{ L - 1 } } $ 
with 
$ 
\forall \, n \in \N \cap [1,L) \colon 
x_n = \mathbf{A}_{ l_n }( W_n x_{ n - 1 } + B_n )
$
that 
$
\mathcal{P}( \Phi )
=
\textstyle
\sum\nolimits_{
n = 1
}^L
l_n ( l_{ n - 1 } + 1 )
$,
$
\mathcal{R}(\Phi) \in C( \R^{ l_0 } , \R^{ l_L } )
$,
and
\begin{equation}
( \mathcal{R} \Phi )( x_0 ) = W_L x_{L-1} + B_L ,
\end{equation}
let $ \mathbb{L} \in \{ 2, 3, 4, \dots \} $, $ M $, $\mathfrak{L}_0, \mathfrak{L}_1, \ldots, \mathfrak{L}_{\mathbb{L}} \in \N $, $h_1, h_2, \ldots, h_M \in \R$,
and let 
$
( \phi_{ m } )_{ 
m \in \{1, 2, \ldots, M\}
} $ $
\subseteq ( \times_{ n = 1 }^{\mathbb{L}} (\R^{ \mathfrak{L}_n \times \mathfrak{L}_{n-1} } \times \R^{ \mathfrak{L}_n } ) )
$. Then there exists $\psi \in \mathcal{N}$ such that for all $x \in \R^{\mathfrak{L}_0}$ it holds that  $\mathcal{R}(\psi) \in C(\R^{\mathfrak{L}_0}, \R^{\mathfrak{L}_{\mathbb{L}}})$, $\mathcal{P}(\psi) \leq M^2 \mathcal{P}(\phi_1)$, and
\begin{equation}
\label{eq:sum:ann}
(\mathcal{R} \psi)(x) = \sum_{m=1}^M h_m (\mathcal{R} \phi_m)(x).
\end{equation}
\end{lemma}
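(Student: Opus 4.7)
The plan is to construct $\psi$ by stacking the $M$ neural networks $\phi_1, \ldots, \phi_M$ ``in parallel'' inside a single network whose hidden layers have width $M$ times those of the $\phi_m$, and then to take a linear combination with coefficients $h_1, \ldots, h_M$ in the final (linear) output layer. Concretely, write $\phi_m = ((W_1^m, B_1^m), \ldots, (W_{\mathbb{L}}^m, B_{\mathbb{L}}^m))$ for $m \in \{1, \ldots, M\}$ and take $\psi \in \mathcal{N}$ with architecture $(\mathfrak{L}_0, M\mathfrak{L}_1, M\mathfrak{L}_2, \ldots, M\mathfrak{L}_{\mathbb{L}-1}, \mathfrak{L}_{\mathbb{L}})$ given by the following data: for the first layer, the weight matrix is the vertical stacking of $W_1^1, \ldots, W_1^M$ and the bias is the vertical stacking of $B_1^1, \ldots, B_1^M$; for each intermediate layer $n \in \{2, 3, \ldots, \mathbb{L}-1\}$, the weight matrix is the block-diagonal matrix $\operatorname{diag}(W_n^1, \ldots, W_n^M)$ and the bias is the stacking of $B_n^1, \ldots, B_n^M$; for the final layer, the weight matrix is the horizontal concatenation $(h_1 W_{\mathbb{L}}^1, h_2 W_{\mathbb{L}}^2, \ldots, h_M W_{\mathbb{L}}^M)$ and the bias is $\sum_{m=1}^M h_m B_{\mathbb{L}}^m$.

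I would next verify \eqref{eq:sum:ann} by a short induction along the layers. The first layer feeds the common input $x$ to each block via $W_1^m x + B_1^m$, the componentwise activation $\mathbf{A}_{M\mathfrak{L}_1}$ acts blockwise (since $\mathbf{A}_n$ is defined componentwise via $\mathbf{a}$), and the block-diagonal intermediate layers keep the $M$ threads independent; consequently the pre-output vector at layer $\mathbb{L}-1$ is exactly the vertical stacking of the vectors that $\phi_m$ would have produced at its layer $\mathbb{L}-1$. Applying the final horizontal concatenation and bias then yields $\sum_{m=1}^M h_m (\mathcal{R}\phi_m)(x)$, which proves \eqref{eq:sum:ann} and also shows $\mathcal{R}(\psi) \in C(\R^{\mathfrak{L}_0}, \R^{\mathfrak{L}_{\mathbb{L}}})$.

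Finally, I would check the parameter bound by direct computation. Writing $P = \mathcal{P}(\phi_1) = \sum_{n=1}^{\mathbb{L}} \mathfrak{L}_n(\mathfrak{L}_{n-1}+1)$, the construction gives
\begin{equation*}
\mathcal{P}(\psi) = M\mathfrak{L}_1(\mathfrak{L}_0+1) + \sum_{n=2}^{\mathbb{L}-1} M\mathfrak{L}_n(M\mathfrak{L}_{n-1}+1) + \mathfrak{L}_{\mathbb{L}}(M\mathfrak{L}_{\mathbb{L}-1}+1),
\end{equation*}
and term-by-term one checks (using $M \geq 1$ so $M\mathfrak{L}_{n-1}+1 \leq M(\mathfrak{L}_{n-1}+1)$) that each summand is bounded by $M^2 \mathfrak{L}_n(\mathfrak{L}_{n-1}+1)$, giving $\mathcal{P}(\psi) \leq M^2 P = M^2\mathcal{P}(\phi_1)$.

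There is no real obstacle here: the main care needed is just bookkeeping of block structures and making sure the componentwise activations commute with the block decomposition (which is immediate from the definition of $\mathbf{A}_n$), and confirming the slightly coarse bound $M^2\mathcal{P}(\phi_1)$ rather than the sharper $M\mathcal{P}(\phi_1) + M^2 \sum_{n=2}^{\mathbb{L}-1}\mathfrak{L}_n \mathfrak{L}_{n-1}$ expression that the construction actually yields.
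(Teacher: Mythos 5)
Your construction is exactly the one the paper uses: stack the first-layer weights and biases vertically, make the intermediate layers block-diagonal, and put the $h_m$-weighted horizontal concatenation together with the bias $\sum_{m=1}^M h_m B_{\mathbb{L}}^m$ in the final affine layer, then verify the same term-by-term bound $\mathcal{P}(\psi)\leq M^2\mathcal{P}(\phi_1)$. The argument is correct and matches the paper's proof in all essentials.
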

\begin{proof}[Proof of Lemma~\ref{lem:sum:ANN}]
Throughout this proof let 
$((W_{m,1}, B_{m,1}), \ldots, (W_{m,\mathbb{L}}, B_{m,\mathbb{L}})) \in ( \times_{ n = 1 }^{\mathbb{L}} (\R^{ \mathfrak{L}_n \times \mathfrak{L}_{n-1} } \times \R^{ \mathfrak{L}_n } ) )$, 
$m \in \{1, 2, \ldots, M\}$, 
satisfy for all 
$i \in \{1, 2, \ldots, M\}$
that 
$\phi_i = ((W_{i,1}, B_{i,1}), \ldots, (W_{i,\mathbb{L}}, B_{i,\mathbb{L}}))$,
let
$(l_0, l_1, \ldots, l_{\mathbb{L}}) \in \N^{\mathbb{L}+1}$ 
satisfy for all 
$i \in \{1, 2, \ldots, \mathbb{L}-1\}$ 
that 
$l_0 = \mathfrak{L}_0$, 
$l_i = M \mathfrak{L}_i$, 
and 
$l_{\mathbb{L}} = \mathfrak{L}_{\mathbb{L}}$, 
let 
$((W_1, B_1), \ldots, (W_{\mathbb{L}}, B_{\mathbb{L}})) \in 
( \times_{ n = 1 }^{\mathbb{L}} (\R^{ l_n \times l_{n-1} } \times \R^{ l_n } ) )$ 
satisfy that
\begin{equation}
W_1 = 
\begin{pmatrix}
W_{1,1}\\
W_{2,1}\\
\vdots \\
W_{M,1}
\end{pmatrix}
\in \R^{(M \mathfrak{L}_1) \times \mathfrak{L}_0} = \R^{l_1 \times l_0}, \qquad B_1 = 
\begin{pmatrix}
B_{1,1}\\
B_{2,1}\\
\vdots \\
B_{M,1}
\end{pmatrix}
\in \R^{(M \mathfrak{L}_1)} = \R^{l_1},
\end{equation}
\begin{equation}
W_{\mathbb{L}} = \Big( h_1 W_{1, \mathbb{L}} \quad h_2 W_{2, \mathbb{L}} \quad \cdots \quad h_M W_{M, \mathbb{L}} \Big) \in \R^{\mathfrak{L}_{\mathbb{L}} \times (M \mathfrak{L}_{\mathbb{L}-1})} = \R^{l_{\mathbb{L}} \times l_{\mathbb{L}-1}},
\end{equation}
\begin{equation}
\text{and} \qquad B_{\mathbb{L}} = \sum_{m=1}^M h_m B_{m, \mathbb{L}} \in \R^{\mathfrak{L}_{\mathbb{L}}} = \R^{l_{\mathbb{L}}},
\end{equation}
assume for all $i \in \{ 2, 3, 4, \ldots \} \cap [ 0,  \mathbb{L}-1 ] $ that
\begin{equation}
W_i =
\begin{pmatrix}
W_{1,i}&		0&		\cdots& 	0\\
0&		W_{2,i}&	 \cdots& 	\vdots\\
\vdots& 	\vdots&		\ddots&		0\\
0&		\cdots& 		0& 		W_{M,i}
\end{pmatrix}
\in \R^{(M \mathfrak{L}_i) \times (M\mathfrak{L}_{i-1})} = \R^{l_i \times l_{i-1}}
\end{equation}
\begin{equation}
\text{and} \qquad B_i = 
\begin{pmatrix}
B_{1,i}\\
B_{2,i}\\
\vdots \\
B_{M,i}
\end{pmatrix}
\in \R^{(M \mathfrak{L}_i)} = \R^{l_i},
\end{equation}
and let $\psi = ((W_1, B_1), \ldots, (W_{\mathbb{L}}, B_{\mathbb{L}})) \in \mathcal{N}$.
Note that for all $x \in \R^{l_0}$ it holds that
\begin{equation}
\label{eq:sum:first}
W_1 x + B_1 = 
\begin{pmatrix}
W_{1,1}x +B_{1,1}\\
W_{2,1}x  + B_{2,1}\\
\vdots \\
W_{M,1}x + B_{M,1}
\end{pmatrix}.
\end{equation}
Moreover, observe that for all $i \in \N \cap [ 0,  \mathbb{L}-2 ] $, $x_1, x_2, \ldots, x_M \in \R^{l_i}$ it holds that  
\begin{equation}
\label{eq:sum:middle}
\begin{split}
W_{i+1} 
\begin{pmatrix}
x_1 \\
x_2\\
\vdots\\
x_M
\end{pmatrix} 
+ B_{i+1} &= 
\begin{pmatrix}
W_{1,i+1}&		0&		\cdots& 	0\\
0&		W_{2,i+1}&	 \cdots& 	\vdots\\
\vdots& 	\vdots&		\ddots&		0\\
0&		\cdots& 		0& 		W_{M,i+1}
\end{pmatrix}
\begin{pmatrix}
x_1 \\
x_2\\
\vdots\\
x_M
\end{pmatrix} +
\begin{pmatrix}
B_{1,i+1}\\
B_{2,i+1}\\
\vdots \\
B_{M,i+1}
\end{pmatrix}\\
& = 
\begin{pmatrix}
W_{1,i+1}x_1 +B_{1,i+1}\\
W_{2,i+1}x_2  + B_{2,i+1}\\
\vdots \\
W_{M,i+1}x_M + B_{M,i+1}
\end{pmatrix}.
\end{split}
\end{equation}
Next note that for all $x_1, x_2, \ldots, x_M \in \R^{l_{\mathbb{L}-1}}$ it holds that 
\begin{equation}
\begin{split}
W_{\mathbb{L}} 
\begin{pmatrix}
x_1 \\
x_2\\
\vdots\\
x_M
\end{pmatrix} 
+ B_{\mathbb{L}}  
&= \Big( h_1 W_{1, \mathbb{L}} \quad h_2 W_{2, \mathbb{L}} \quad \cdots \quad h_M W_{M, \mathbb{L}} \Big)
\begin{pmatrix}
x_1 \\
x_2\\
\vdots\\
x_M
\end{pmatrix}
+ \sum_{m=1}^M h_m B_{m, \mathbb{L}}\\
& = \Biggl[ \sum_{m=1}^M h_m W_{m, \mathbb{L}} x_m \Biggr] + \Biggl[ \sum_{m=1}^M h_m B_{m, \mathbb{L}} \Biggr] = \sum_{m=1}^M h_m  \big( W_{m, \mathbb{L}} x_m + B_{m, \mathbb{L}} \big).
\end{split}
\end{equation}
This, \eqref{eq:sum:first}, and \eqref{eq:sum:middle} ensure that for all $x \in \R^{\mathfrak{L}_0}$ it holds that $\mathcal{R}(\psi) \in C(\R^{\mathfrak{L}_0}, \R^{\mathfrak{L}_{\mathbb{L}}})$ 
and
\begin{equation}
\label{eq:sum:psi}
(\mathcal{R} \psi)(x) = \sum_{m=1}^M h_m (\mathcal{R} \phi_m)(x).
\end{equation}
Moreover, observe that 
the assumption that
for all 
$i \in \{1, 2, \ldots, \mathbb{L}-1\}$ 
it holds that 
$l_0 = \mathfrak{L}_0$, 
$l_i = M \mathfrak{L}_i$, 
and 
$l_{\mathbb{L}} = \mathfrak{L}_{\mathbb{L}}$ assures that
\begin{equation}
\begin{split}
\mathcal{P}(\psi) &= \sum_{n=1}^{\mathbb{L}} l_n(l_{n-1} +1) 
= l_1(l_0+1) + l_{\mathbb{L}}(l_{\mathbb{L}-1}+1) + \sum_{n=2}^{\mathbb{L}-1} l_n(l_{n-1} +1)\\
& = M \mathfrak{L}_1(\mathfrak{L}_0 +1) + \mathfrak{L}_{\mathbb{L}} (M \mathfrak{L}_{\mathbb{L}-1} + 1) + \sum_{n=2}^{\mathbb{L}-1} M \mathfrak{L}_n(M \mathfrak{L}_{n-1} +1)\\
& \leq M^2 \left[ \sum_{n=1}^{\mathbb{L}} \mathfrak{L}_n( \mathfrak{L}_{n-1} +1) \right] = M^2 \mathcal{P}(\phi_1).
\end{split}
\end{equation}
Combining this with \eqref{eq:sum:psi} establishes \eqref{eq:sum:ann}.
The proof of 
Lemma~\ref{lem:sum:ANN} is thus completed.
\end{proof}

\subsection{Compositions of DNNs involving artificial identities}
\label{sec:composition:ANN}

\begin{prop}[Composition of neural networks]
\label{prop:composition:ANN}
Let $ d_1, d_2, d_3 \in \N $,
let 
$ \mathbf{A}_n \colon \R^n \to \R^n $, 
$ n \in \N $, 
and 
$ \mathbf{a} \colon \R \to \R$
be continuous functions 
which satisfy 
for all 
$
n \in \N
$,
$ x = ( x_1, \dots, x_n ) \in \R^n $
that
$ 
\mathbf{A}_n(x)
=
( \mathbf{a}(x_1), \ldots, \mathbf{a}(x_n) )
$,
let 
\begin{equation}
\label{eq:composition:ANN_class}
\mathcal{N}
=
\cup_{ L \in \{ 2, 3, 4, \dots \} }
\cup_{ ( l_0, l_1, \ldots, l_L ) \in \N^{ L + 1 } }
(
\times_{ n = 1 }^L 
(
\R^{ l_n \times l_{ n - 1 } } \times \R^{ l_n } 
)
)
,
\end{equation}
let 
$
\mathcal{P}
\colon \mathcal{N} \to \N
$,
$ \mathcal{L}
\colon \mathcal{N} \to \cup_{ L \in \{ 2, 3, 4, \dots \} } \N^{L+1}
$,
and
$
\mathcal{R} \colon 
\mathcal{N} 
\to 
\cup_{ k, l \in \N } C( \R^k, \R^l )
$
be the functions which satisfy 
for all 
$ L \in \{ 2, 3, 4, \dots \} $, 
$ l_0, l_1, \ldots, l_L \in \N $, 
$ 
\Phi = ((W_1, B_1), \ldots, $ $ (W_L, B_L)) \in 
( \times_{ n = 1 }^L (\R^{ l_n \times l_{n-1} } \times \R^{ l_n } ) )
$,
$ x_0 \in \R^{l_0} $, 
$ \ldots $, 
$ x_{ L - 1 } \in \R^{ l_{ L - 1 } } $ 
with 
$ 
\forall \, n \in \N \cap [1,L) \colon 
x_n = \mathbf{A}_{ l_n }( W_n x_{ n - 1 } + B_n )
$
that 
$
\mathcal{P}( \Phi )
=
\textstyle
\sum\nolimits_{
n = 1
}^L
l_n ( l_{ n - 1 } + 1 )
$,
$
\mathcal{R}(\Phi) \in C( \R^{ l_0 } , \R^{ l_L } )
$,
$
\mathcal{L}(\Phi) = ( l_0, l_1, \ldots, l_L ) $,
and
\begin{equation}
\label{eq:composition:ANN_realization}
( \mathcal{R} \Phi )( x_0 ) = W_L x_{L-1} + B_L ,
\end{equation}	
and let 
$ \phi_{ 1 }, \phi_{ 2 }, \mathbb{I} \in \mathcal{N} $,
$ L_1, L_2 \in  \{2, 3, 4 \ldots \}$,
$ \mathfrak{i},
l_{1,0}, l_{1,1}, \ldots, l_{1,L_1},
l_{2,0}, l_{2,1}, \ldots, l_{2,L_2} \in \N$
satisfy for all $x \in \R^{ d_2 } $, $i \in \{1, 2\}$
that
$
\mathcal{R}( \phi_{ 1} )
\in 
C( \R^{ d_2 }, \R^{ d_3 } )
$, $
\mathcal{R}( \phi_{ 2} )
\in 
C( \R^{d_1 }, \R^{d_2} )
$, $
\mathcal{R}( \mathbb{I} )
\in 
C( \R^{d_2}, \R^{d_2} )
$,
$ \mathcal{L}(\phi_i) = (l_{i,0}, l_{i,1}, \ldots, l_{i,L_i}) \in \N^{L_i+1}$,
$\mathcal{L}(\mathbb{I}) = (d_2, \mathfrak{i}, d_2) \in \N^3 $, and $(\mathcal{R} \, \mathbb{I})(x) = x$. Then there exists $\psi \in \mathcal{N}$ such that for all $x \in \R^{d_1} $ it holds that  $\mathcal{R}(\psi) \in C(\R^{d_1}, \R^{d_3})$, $\mathcal{L}(\psi) = (l_{2,0}, l_{2,1},$ $ \ldots, l_{2,L_2-1}, \mathfrak{i}, l_{1,1}, l_{1,2}, \ldots, l_{1, L_1}) \in \N^{L_1 +L_2+1}$, $\mathcal{P}(\psi) \leq \max\{1, 2^{-1} (d_2)^{-2} \, \mathcal{P}(\mathbb{I}) \} (\mathcal{P}(\phi_1) + \mathcal{P}(\phi_2)) $, and
\begin{equation}
\label{eq:comp:ann}
(\mathcal{R} \psi)(x)
=
(\mathcal{R} \phi_1) \bigl((\mathcal{R} \phi_2)(x) \bigr)
=
\bigl( (\mathcal{R} \phi_1) \circ (\mathcal{R} \phi_2) \bigr)(x).
\end{equation}
\end{prop}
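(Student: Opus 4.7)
The plan is to explicitly construct $\psi$ by stringing together the layers of $\phi_2$, the identity network $\mathbb{I}$, and $\phi_1$, merging the two affine maps at each of the two junctions so as to avoid inflating the parameter count. Write $\phi_2 = ((W_{2,n}, B_{2,n}))_{n=1}^{L_2}$, $\phi_1 = ((W_{1,n}, B_{1,n}))_{n=1}^{L_1}$, and $\mathbb{I} = ((V_1, C_1), (V_2, C_2))$ with $V_1 \in \R^{\mathfrak{i} \times d_2}$ and $V_2 \in \R^{d_2 \times \mathfrak{i}}$. I would define $\psi \in \mathcal{N}$ to consist of $L_1 + L_2$ affine maps: layers $1, \ldots, L_2 - 1$ are taken verbatim from $\phi_2$; layer $L_2$ is the merged map $(V_1 W_{2,L_2},\, V_1 B_{2,L_2} + C_1)$; layer $L_2 + 1$ is the merged map $(W_{1,1} V_2,\, W_{1,1} C_2 + B_{1,1})$; and layers $L_2 + 2, \ldots, L_2 + L_1$ reproduce layers $2, \ldots, L_1$ of $\phi_1$. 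This construction immediately yields the asserted architecture $\mathcal{L}(\psi) = (l_{2,0}, \ldots, l_{2,L_2 - 1}, \mathfrak{i}, l_{1,1}, \ldots, l_{1,L_1})$.

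To verify $(\mathcal{R}\psi)(x) = (\mathcal{R}\phi_1)((\mathcal{R}\phi_2)(x))$ I would propagate an arbitrary input $x \in \R^{d_1}$ through $\psi$ layer by layer. By construction the activations of $\psi$ at layers $1, \ldots, L_2 - 1$ coincide with those of $\phi_2$, and the pre-activation at the $L_2$-th layer of $\psi$ equals $V_1 (W_{2,L_2} x_{L_2-1} + B_{2,L_2}) + C_1 = V_1 (\mathcal{R}\phi_2)(x) + C_1$, which is exactly the pre-activation of the unique hidden layer of $\mathbb{I}$ applied to $(\mathcal{R}\phi_2)(x)$. Applying $\mathbf{A}_{\mathfrak{i}}$ and the merged $(L_2+1)$-st affine map therefore produces $\mathbf{A}_{l_{1,1}}(W_{1,1}[V_2 \mathbf{A}_{\mathfrak{i}}(V_1 (\mathcal{R}\phi_2)(x) + C_1) + C_2] + B_{1,1}) = \mathbf{A}_{l_{1,1}}(W_{1,1}(\mathcal{R}\phi_2)(x) + B_{1,1})$, where the simplification uses the hypothesis $(\mathcal{R}\mathbb{I})(y) = y$ for every $y \in \R^{d_2}$. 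From layer $L_2 + 2$ onwards the activations of $\psi$ then agree with those of $\phi_1$ applied to $(\mathcal{R}\phi_2)(x)$, and reading off the output gives the desired composition identity.

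For the parameter bound, a direct bookkeeping yields $\mathcal{P}(\psi) = \mathcal{P}(\phi_1) + \mathcal{P}(\phi_2) + (\mathfrak{i} - d_2)(l_{2,L_2-1} + 1 + l_{1,1})$. If $\mathfrak{i} \leq d_2$ the correction is nonpositive and the claimed bound (with $\max\{\cdots\} \geq 1$) is immediate. If $\mathfrak{i} > d_2$, I would use the trivial inequalities $d_2(l_{2,L_2-1}+1) = l_{2,L_2}(l_{2,L_2-1}+1) \leq \mathcal{P}(\phi_2)$ and $d_2 \, l_{1,1} \leq l_{1,1}(l_{1,0}+1) \leq \mathcal{P}(\phi_1)$ to conclude that $d_2(l_{2,L_2-1}+1+l_{1,1}) \leq \mathcal{P}(\phi_1) + \mathcal{P}(\phi_2)$, hence $\mathcal{P}(\psi) \leq (\mathfrak{i}/d_2)(\mathcal{P}(\phi_1) + \mathcal{P}(\phi_2))$; a one-line computation then gives $\mathfrak{i}/d_2 \leq (\mathfrak{i}(d_2+1) + d_2(\mathfrak{i}+1))/(2 d_2^2) = \mathcal{P}(\mathbb{I})/(2 d_2^2)$, closing the estimate. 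The main obstacle I foresee is purely notational: keeping the indexing of the merged and unmerged layers of $\psi$, $\phi_1$, $\phi_2$, and $\mathbb{I}$ consistent across both the realization computation and the parameter count, and in particular handling the boundary cases where $L_1 = 2$ or $L_2 = 2$, in which one of the unmerged blocks collapses and the merged layers sit directly next to the input or output boundary. Beyond that careful bookkeeping, the proof reduces to elementary linear algebra and arithmetic.
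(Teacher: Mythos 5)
Your construction is exactly the one in the paper: the layers of $\phi_2$ are kept up to layer $L_2-1$, the two affine maps at each junction with $\mathbb{I}$ are merged, the remaining layers of $\phi_1$ are appended, and the parameter count is controlled via $\max\{1,\mathfrak{i}/d_2\}\,(\mathcal{P}(\phi_1)+\mathcal{P}(\phi_2))$ together with $2\,\mathfrak{i}\,d_2\le\mathcal{P}(\mathbb{I})$. Your bookkeeping for the bound (exact difference $(\mathfrak{i}-d_2)(l_{2,L_2-1}+1+l_{1,1})$ plus a case split on $\mathfrak{i}\lessgtr d_2$) is a correct, equivalent rearrangement of the paper's estimate, and the boundary cases $L_1=2$ or $L_2=2$ you flag are indeed harmless since the corresponding sums are empty.
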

\begin{proof}[Proof of Proposition~\ref{prop:composition:ANN}]
Throughout this proof let
$(W_{3,1}, B_{3,1}) \in \R^{\mathfrak{i} \times d_2} \times \R^{\mathfrak{i}}$,
$(W_{3,2}, $ $ B_{3,2}) \in \R^{d_2 \times \mathfrak{i}} \times \R^{d_2}$,
and
$((W_{j,1}, B_{j,1}), \ldots, (W_{j,L_j}, B_{j,L_j})) \in 
( \times_{ n = 1 }^{L_j} (\R^{ l_{j, n} \times l_{j,n-1} } \times \R^{ l_{j,n} } ) )$, $j \in \{1, 2\}$,
satisfy for all $j \in \{1, 2\}$ that
$\mathbb{I} = ((W_{3,1}, B_{3,1}), (W_{3,2}, B_{3,2}))$
and
$\phi_j =  ((W_{j,1}, B_{j,1}), \ldots,  (W_{j,L_j}, B_{j,L_j}))$,
let $L_4 = L_1 +L_2$, 
let
$l_{4,0}, l_{4,1}, \ldots, l_{4,L_4} \in \N$ 
satisfy for all 
$i \in \{0, 1, \ldots, L_2-1 \} $, 
$j \in \{1, 2, \ldots,  L_1\}$ 
that 
\begin{equation}
  l_{4,i}= l_{2,i} ,
  \qquad
  l_{4,L_2} = \mathfrak{i} ,
  \qquad
  \text{and} 
  \qquad
  l_{4,L_2 +j} = l_{1,j}
  ,
\end{equation}
let  
$((W_{4,1}, B_{4,1}), \ldots, $ $ (W_{4,L_4}, B_{4,L_4})) \in 
( \times_{ n = 1 }^{L_4} (\R^{ l_{4, n} \times l_{4,n-1} } \times \R^{ l_{4,n} } ) )$ 
satisfy for all 
$i \in \{ 1, 2, \ldots, L_2-1 \} $, 
$j \in \{ 2, 3, \ldots,  L_1\}$ 
that 
\begin{equation}
(W_{4,i}, B_{4,i}) = (W_{2,i}, B_{2,i}),
\end{equation}
\begin{equation}
(W_{4, L_2}, B_{4,L_2}) = (W_{3,1} W_{2, L_2}, W_{3,1} B_{2, L_2} + B_{3, 1}),
\end{equation}
\begin{equation}
(W_{4, L_2+1}, B_{4, L_2+1}) = (W_{1,1} W_{3, 2}, W_{1,1} B_{3, 2} + B_{1, 1}),
\end{equation}
and $(W_{4,L_2+j}, B_{4,L_2+j}) = (W_{1,j}, B_{1,j})$, 
and let 
$\psi = ((W_{4,1}, B_{4,1}), \ldots,  (W_{4,L_4}, B_{4,L_4})) \in \mathcal{N}$.
Observe that for all $x \in \R^{l_{4, L_2-1}} = \R^{l_{2, L_2-1}}$, $y \in \R^{l_{4, L_2}} = \R^{\mathfrak{i}} $ it holds that
\begin{equation}
W_{4, L_2} x + B_{4,L_2} = W_{3,1} W_{2, L_2} x + W_{3,1} B_{2, L_2} + B_{3, 1} = W_{3,1} (W_{2, L_2} x +  B_{2, L_2}) + B_{3, 1}
\end{equation}
and
\begin{equation}
W_{4, L_2+1} y + B_{4,L_2+1} = W_{1,1} W_{3, 2} y + W_{1,1} B_{3, 2} + B_{1, 1} = W_{1,1} (W_{3, 2} y + B_{3, 2}) + B_{1, 1}.
\end{equation}
This ensures that for all $x \in \R^{d_1}$ it holds that $\mathcal{R}(\psi) \in C(\R^{d_1}, \R^{d_3})$ and
\begin{equation}
\label{eq:composition}
(\mathcal{R} \psi)(x)
=
(\mathcal{R} \phi_1) \Bigl((\mathcal{R} \, \mathbb{I}) \bigl((\mathcal{R} \phi_2)(x) \bigr) \Bigr)
=
(\mathcal{R} \phi_1) \bigl((\mathcal{R} \phi_2)(x) \bigr).
\end{equation}
Moreover, note that
\begin{equation}
\begin{split}
\mathcal{P}(\psi) 
&= \sum_{n=1}^{L_4} l_{4,n} (l_{4,n-1} +1) 
= \sum_{n= 1}^{L_1 + L_2} l_{4,n} (l_{4,n-1} +1) \\
&= \Biggl[ \sum_{n=1}^{L_2-1} l_{4,n} (l_{4,n-1} +1) \Biggr] + \Biggl[ \sum_{n=L_2+2}^{L_1+L_2} l_{4,n} (l_{4,n-1} +1) \Biggr] \\
& \quad + l_{4,L_2} (l_{4,L_2-1} +1) + l_{4,L_2+1} (l_{4,L_2} +1)\\
& = \Biggl[ \sum_{n=1}^{L_2-1} l_{2,n} (l_{2,n-1} +1) \Biggr] + \Biggl[ \sum_{n=2}^{L_1} l_{4, L_2 + n} (l_{4,L_2 +n-1} +1) \Biggr] \\
& \quad + \mathfrak{i} \, (l_{2,L_2-1} +1) + l_{1,1} (\mathfrak{i} +1).
\end{split}
\end{equation}
Hence, we obtain that 
\begin{equation}
\label{eq:sum:parameters}
\begin{split}
\mathcal{P}(\psi) 
& = \Biggl[ \sum_{n=1}^{L_2-1} l_{2,n} (l_{2,n-1} +1) \Biggr] + \Biggl[ \sum_{n=2}^{L_1} l_{1,  n} (l_{1, n-1} +1) \Biggr] \\
& \quad + \mathfrak{i} \, (l_{2,L_2-1} +1) + l_{1,1} (\mathfrak{i} +1)\\
& = \Biggl[ \sum_{n=1}^{L_2-1} l_{2,n} (l_{2,n-1} +1) \Biggr] + \Biggl[ \sum_{n=2}^{L_1} l_{1,  n} (l_{1, n-1} +1) \Biggr] \\
& \quad + \frac{\mathfrak{i}}{d_2} \cdot l_{2, L_2} (l_{2,L_2-1} +1) + l_{1,1} \left(l_{1,0} \cdot \frac{\mathfrak{i}}{d_2} +1\right)\\
& \leq  \max \{1, \mathfrak{i}/d_2 \} \left( \Biggl[ \sum_{n=1}^{L_2} l_{2,n} (l_{2,n-1} +1) \Biggr] + \Biggl[ \sum_{n=1}^{L_1} l_{1,  n} (l_{1, n-1} +1) \Biggr] \right) \\
& = \max \{1, \mathfrak{i}/d_2 \} \big(\mathcal{P}(\phi_1) + \mathcal{P}(\phi_2)\big).
\end{split}
\end{equation}
Next observe that
\begin{equation}
\mathcal{P}(\mathbb{I}) = \mathfrak{i} \, (d_2+1) + d_2(\mathfrak{i} +1)
= 2 \, \mathfrak{i} \, d_2 +  \mathfrak{i} + d_2 
> 2 \, \mathfrak{i} \, d_2.
\end{equation}
This and \eqref{eq:sum:parameters} ensure that
\begin{equation}
\mathcal{P}(\psi)  \leq \max \{1, 2^{-1} (d_2)^{-2} \, \mathcal{P}(\mathbb{I}) \} \big(\mathcal{P}(\phi_1) + \mathcal{P}(\phi_2)\big).
\end{equation} 
Combining this with \eqref{eq:composition} establishes \eqref{eq:comp:ann}. The proof of Proposition~\ref{prop:composition:ANN} is thus completed.
\end{proof}

\begin{prop}
\label{prop:sum:comp:ANN}
Let $ d \in \N $, 
let 
$ \mathbf{A}_n \colon \R^n \to \R^n $, 
$ n \in \N $, 
and 
$ \mathbf{a} \colon \R \to \R$
be continuous functions 
which satisfy 
for all 
$
n \in \N
$,
$ x = ( x_1, \dots, x_n ) \in \R^n $
that
$ 
\mathbf{A}_n(x)
=
( \mathbf{a}(x_1), \ldots, \mathbf{a}(x_n) )
$,
let 
\begin{equation}
\mathcal{N}
=
\cup_{ L \in \{ 2, 3, 4, \dots \} }
\cup_{ ( l_0, l_1, \ldots, l_L ) \in \N^{ L + 1 } }
(
\times_{ n = 1 }^L 
(
\R^{ l_n \times l_{ n - 1 } } \times \R^{ l_n } 
)
)
,
\end{equation}
let 
$
\mathcal{P}
\colon \mathcal{N} \to \N
$,
$ \mathcal{L}
\colon \mathcal{N} \to \cup_{ L \in \{ 2, 3, 4, \dots \} } \N^{L+1}
$,
and
$
\mathcal{R} \colon 
\mathcal{N} 
\to 
\cup_{ k, l \in \N } C( \R^k, \R^l )$
be the functions which satisfy 
for all 
$ L \in \{ 2, 3, 4, \dots \} $, 
$ l_0, l_1, \ldots, l_L \in \N $, 
$ 
\Phi = ((W_1, B_1), \ldots, $ $ (W_L, B_L)) \in 
( \times_{ n = 1 }^L (\R^{ l_n \times l_{n-1} } \times \R^{ l_n } ) )
$,
$ x_0 \in \R^{l_0} $, 
$ \ldots $, 
$ x_{ L - 1 } \in \R^{ l_{ L - 1 } } $ 
with 
$ 
\forall \, n \in \N \cap [1,L) \colon 
x_n = \mathbf{A}_{ l_n }( W_n x_{ n - 1 } + B_n )
$
that 
$
\mathcal{P}( \Phi )
=
\textstyle
\sum\nolimits_{
n = 1
}^L
l_n ( l_{ n - 1 } + 1 )
$,
$
\mathcal{R}(\Phi) \in C( \R^{ l_0 } , \R^{ l_L } )
$,
$\mathcal{L}(\Phi) = ( l_0, l_1, \ldots, l_L ) $,
and
\begin{equation}
( \mathcal{R} \Phi )( x_0 ) = W_L x_{L-1} + B_L ,
\end{equation}	
and let 
$
\phi_{ 1 }, \phi_{ 2}, \mathbb{I} \in \mathcal{N} $, 
$ L_1, L_2 \in  \{2, 3, 4 \ldots \} $,
$ \mathfrak{i},
l_{1,0}, l_{1,1}, \ldots, l_{1,L_1},
l_{2,0}, l_{2,1}, \ldots, l_{2,L_2} \in \N$
satisfy for all $x \in \R^d$, 
$i \in \{1, 2\}$
that
$
\mathcal{R}(\phi_{ i })
\in 
C( \R^d, \R^d )
$,
$ \mathcal{L}(\phi_i) = (l_{i,0}, l_{i,1}, \ldots, l_{i,L_i}) \in \N^{L_i+1}$,
$\mathcal{L}(\mathbb{I}) = (d, \mathfrak{i}, d) \in \N^3 $,  $(\mathcal{R} \, \mathbb{I})(x) = x$, 
and
$l_{1,L_1-1} \leq l_{2,L_2-1} + \mathfrak{i} $. 
Then there exists 
$\psi \in \mathcal{N}$ 
such that for all 
$x \in \R^d$ 
it holds that  
$\mathcal{R}(\psi) \in C(\R^d, \R^d)$,
$\mathcal{L}(\psi) = (l_{1,0}, l_{1,1}, \ldots, l_{1, L_1-1}, l_{2,1}+\mathfrak{i}, l_{2, 2}+ \mathfrak{i}, \ldots, l_{2, L_2-1}+ \mathfrak{i}, l_{2, L_2}) \in \N^{L_1+ L_2}$,
$\mathcal{P}(\psi) \leq \mathcal{P}(\phi_1) + (\mathcal{P}(\phi_2) + \mathcal{P}(\mathbb{I}))^3$, 
and
\begin{equation}
\label{eq:sum:comp:ann}
(\mathcal{R} \psi)(x) =  (\mathcal{R} \phi_1)(x) +  (\mathcal{R} \phi_2) \bigl( (\mathcal{R} \phi_1)(x) \bigr).
\end{equation}
\end{prop}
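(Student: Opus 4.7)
The plan is to construct $\psi$ by splicing $\phi_1$'s first $L_1 - 1$ layers onto a parallel ``$\phi_2 + \mathbb{I}$'' block, followed by a final linear layer that sums $\phi_2$'s output with the decoded identity. Writing $(W_{1,n}, B_{1,n})_{n=1}^{L_1}$, $(W_{2,n}, B_{2,n})_{n=1}^{L_2}$, and $((W_{3,1}, B_{3,1}), (W_{3,2}, B_{3,2}))$ for the weight-bias data of $\phi_1$, $\phi_2$, and $\mathbb{I}$ respectively, the weight-bias pairs of $\psi$ would be chosen as follows. For $n = 1, \ldots, L_1 - 1$, take $(W_{1, n}, B_{1, n})$ verbatim. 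At the junction $n = L_1$, take $W_{\psi, L_1} = \bigl( \begin{smallmatrix} W_{2,1} W_{1, L_1} \\ W_{3,1} W_{1, L_1} \end{smallmatrix} \bigr)$ and $B_{\psi, L_1} = \bigl( \begin{smallmatrix} W_{2,1} B_{1, L_1} + B_{2,1} \\ W_{3,1} B_{1, L_1} + B_{3,1} \end{smallmatrix} \bigr)$, so that this single layer executes $\phi_1$'s final linear map and starts both $\phi_2$ and $\mathbb{I}$ in parallel. For $k = 1, \ldots, L_2 - 2$, take $W_{\psi, L_1 + k}$ to be block-diagonal with blocks $W_{2, k+1}$ and $W_{3,1} W_{3,2}$, and bias $\bigl( \begin{smallmatrix} B_{2, k+1} \\ W_{3,1} B_{3,2} + B_{3,1} \end{smallmatrix} \bigr)$. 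For $n = L_1 + L_2 - 1$, take $W_{\psi, n} = (W_{2, L_2} \mid W_{3, 2})$ and $B_{\psi, n} = B_{2, L_2} + B_{3, 2}$. The architecture tuple then equals $(l_{1,0}, \ldots, l_{1, L_1 - 1}, l_{2,1} + \mathfrak{i}, \ldots, l_{2, L_2 - 1} + \mathfrak{i}, l_{2, L_2})$ by inspection.

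To verify \eqref{eq:sum:comp:ann}, I would trace the signal through $\psi$. The first $L_1 - 1$ layers of $\psi$ coincide with those of $\phi_1$, producing $\phi_1$'s pre-final-layer activation. The junction layer absorbs $W_{1, L_1}$ and $B_{1, L_1}$, producing after activation the pair $\bigl( \mathbf{A}_{l_{2,1}}(W_{2,1} y + B_{2,1}), \mathbf{A}_{\mathfrak{i}}(W_{3,1} y + B_{3,1}) \bigr)$ with $y := (\mathcal{R}\phi_1)(x)$. The key invariant, proved by induction on $k$, is that after the activation at layer $L_1 + k$ for $k = 0, \ldots, L_2 - 2$ the state equals $(u_{k+1}, z)$, where $u_{k+1}$ is $\phi_2$'s $(k+1)$-th activated state at input $y$ and $z := \mathbf{A}_{\mathfrak{i}}(W_{3,1} y + B_{3,1})$. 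The inductive step for the identity block uses $W_{3,1} W_{3,2} z + W_{3,1} B_{3,2} + B_{3,1} = W_{3,1}(W_{3,2} z + B_{3,2}) + B_{3,1} = W_{3,1} y + B_{3,1}$, where the second equality follows from the hypothesis $(\mathcal{R} \, \mathbb{I})(y) = y$. The final layer then outputs $(W_{2, L_2} u_{L_2 - 1} + B_{2, L_2}) + (W_{3, 2} z + B_{3, 2}) = (\mathcal{R}\phi_2)(y) + y$, which is exactly \eqref{eq:sum:comp:ann}.

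For the parameter count, I split $\mathcal{P}(\psi)$ into: the contribution from the first $L_1 - 1$ layers, which is at most $\mathcal{P}(\phi_1)$; the junction term $(l_{2,1} + \mathfrak{i})(l_{1, L_1 - 1} + 1)$; the middle terms $\sum_{k=1}^{L_2 - 2} (l_{2, k+1} + \mathfrak{i})(l_{2, k} + \mathfrak{i} + 1)$; and the final term $l_{2, L_2}(l_{2, L_2 - 1} + \mathfrak{i} + 1)$. The hypothesis $l_{1, L_1 - 1} \leq l_{2, L_2 - 1} + \mathfrak{i}$ bounds the junction term by $(l_{2,1} + \mathfrak{i})(l_{2, L_2 - 1} + \mathfrak{i} + 1)$, which only involves data from $\phi_2$ and $\mathbb{I}$. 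Using the trivial bounds $l_{2, k} \leq \mathcal{P}(\phi_2)$, $\mathfrak{i}, d \leq \mathcal{P}(\mathbb{I})$, and $L_2 \leq \mathcal{P}(\phi_2)$, each factor in every summand is at most $\mathcal{P}(\phi_2) + \mathcal{P}(\mathbb{I})$, and the number of summands is at most $\mathcal{P}(\phi_2) + \mathcal{P}(\mathbb{I})$, so their total is at most $(\mathcal{P}(\phi_2) + \mathcal{P}(\mathbb{I}))^3$, yielding the claimed bound. The main obstacle is the bookkeeping for the parameter count; the conceptual heart of the proof is the identity-propagation invariant for the middle block, which hinges crucially on $(\mathcal{R} \, \mathbb{I})(y) = y$.
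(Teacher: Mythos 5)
Your construction, the signal-tracing invariant, and the use of the hypothesis $l_{1,L_1-1} \leq l_{2,L_2-1} + \mathfrak{i}$ are exactly those of the paper's proof. The parameter count takes a slightly cruder route (number of summands times square of the largest factor) but reaches the same cubic bound; the only small care needed is that the stated ``trivial bounds'' $l_{2,k} \leq \mathcal{P}(\phi_2)$ and $\mathfrak{i} \leq \mathcal{P}(\mathbb{I})$ leave a stray $+1$ in factors like $l_{2,k}+\mathfrak{i}+1$ --- this is repaired by observing $l_{2,k}\le\mathcal{P}(\phi_2)/2$ and $\mathfrak{i}\le\mathcal{P}(\mathbb{I})/2$ (each from the structure of $\mathcal{P}$), after which every factor is indeed $\le \mathcal{P}(\phi_2)+\mathcal{P}(\mathbb{I})$ and the argument closes.
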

\begin{proof}[Proof of Proposition~\ref{prop:sum:comp:ANN}]
Throughout this proof let
$(W_{3,1}, B_{3,1}) \in (\R^{\mathfrak{i} \times d} \times \R^{\mathfrak{i}})$,
$(W_{3,2}, $ $ B_{3,2}) \in (\R^{d \times \mathfrak{i}} \times \R^{d})$,
and
$((W_{j,1}, B_{j,1}), \ldots, (W_{j,L_j}, B_{j,L_j})) \in 
( \times_{ n = 1 }^{L_j} (\R^{ l_{j, n} \times l_{j,n-1} } \times \R^{ l_{j,n} } ) )$, $j \in \{1, 2\}$,
satisfy for all $j \in \{1, 2\}$ that
$\mathbb{I} = ((W_{3,1}, B_{3,1}), (W_{3,2}, B_{3,2}))$
and
$\phi_j =  ((W_{j,1}, B_{j,1}), \ldots,  (W_{j,L_j}, B_{j,L_j}))$,
let $L_4 = L_1 +L_2-1$, 
let
$l_{4,0}, l_{4,1}, \ldots, l_{4,L_4} \in \N$ 
satisfy for all 
$i \in \{0, 1, \ldots, L_1-1 \} $, 
$j \in \{0, 1,  \ldots,  L_2-2\}$ 
that 
\begin{equation}
  l_{4,i}= l_{1,i} , 
  \qquad
  l_{4,L_1 +j} = l_{2,j+1} +\mathfrak{i} ,
  \qquad 
  \text{and}
  \qquad
  l_{4,L_4} = l_{2, L_2} =d ,
\end{equation}
let  
$((W_{4,1}, B_{4,1}), \ldots, $ $ (W_{4,L_4}, B_{4,L_4})) \in 
( \times_{ n = 1 }^{L_4} (\R^{ l_{4, n} \times l_{4,n-1} } \times \R^{ l_{4,n} } ) ) $,
assume for all 
$i \in \{ 1, 2, \ldots, L_1-1 \} $
that 
\begin{equation}
\label{eq:sum:comp:1}
(W_{4,i}, B_{4,i}) = (W_{1,i}, B_{1,i}) \in (\R^{ l_{1, i} \times l_{1,i-1} } \times \R^{ l_{1,i} } ) = (\R^{ l_{4, i} \times l_{4,i-1} } \times \R^{ l_{4,i} } ),
\end{equation}
\begin{equation}
W_{4, L_1} = 
\begin{pmatrix}
W_{2,1} W_{1,L_1} \\
W_{3,1} W_{1, L_1}
\end{pmatrix}
\in \R^{(l_{2,1} + \mathfrak{i}) \times l_{1, L_1-1}} = \R^{l_{4, L_1} \times l_{4, L_1-1}},
\end{equation}
\begin{equation}
B_{4, L_1} = 
\begin{pmatrix}
W_{2,1} B_{1, L_1} + B_{2,1}\\
W_{3,1} B_{1, L_1} + B_{3,1}
\end{pmatrix}
\in \R^{(l_{2,1} + \mathfrak{i})} = \R^{l_{4, L_1}},
\end{equation}
\begin{equation}
W_{4, L_4} = \big( W_{2, L_2}  \quad W_{3,2} \big) \in \R^{ l_{2, L_2} \times (l_{2,L_2-1} + \mathfrak{i})} = \R^{l_{4, L_4} \times l_{4, L_4-1}},
\end{equation}
\begin{equation}
\text{and} \qquad B_{4, L_4} = B_{2, L_2} + B_{3,2} \in \R^{ l_{2, L_2}} = \R^{l_{4, L_4}},
\end{equation}
assume for all $j \in \N \cap [ 0, L_2-2 ] $ that
\begin{equation}
W_{4, L_1 + j} = 
\begin{pmatrix}
W_{2,j+1}&		0\\
0&		W_{3,1} W_{3,2}
\end{pmatrix}
\in \R^{(l_{2, j+1} +\mathfrak{i}) \times (l_{2,j} + \mathfrak{i})} = \R^{l_{4,L_1+j} \times l_{4, L_1 +j-1}}
\end{equation}
\begin{equation}
\text{and} \qquad B_{4, L_1 +j} =
\begin{pmatrix}
B_{2, j+1} \\
W_{3,1} B_{3,2} + B_{3,1}
\end{pmatrix}
\in \R^{(l_{2, j+1} + \mathfrak{i})} = \R^{l_{4, L_1 +j}},
\end{equation}
and let $\psi = ((W_{4,1}, B_{4,1}), \ldots,  (W_{4,L_4}, B_{4,L_4})) \in \mathcal{N}$. Observe that for all $x \in \R^{l_{4, L_1 -1}}$ it holds that
\begin{equation}
\label{eq:sum:comp:first}
\begin{split}
W_{4, L_1} x + B_{4, L_1} &= 
\begin{pmatrix}
W_{2,1} W_{1,L_1} \\
W_{3,1} W_{1, L_1}
\end{pmatrix}
x +
\begin{pmatrix}
W_{2,1} B_{1, L_1} + B_{2,1}\\
W_{3,1} B_{1, L_1} + B_{3,1}
\end{pmatrix}\\
& = 
\begin{pmatrix}
W_{2,1} W_{1,L_1}x + W_{2,1} B_{1, L_1} + B_{2,1}\\
W_{3,1} W_{1, L_1}x + W_{3,1} B_{1, L_1} + B_{3,1}
\end{pmatrix}\\
& = 
\begin{pmatrix}
W_{2,1} (W_{1,L_1}x +  B_{1, L_1}) + B_{2,1}\\
W_{3,1} (W_{1, L_1}x +  B_{1, L_1}) + B_{3,1}
\end{pmatrix}.
\end{split}
\end{equation}
Moreover, note that for all
$ i \in \N \cap [ 0, L_2-2 ] $,
$x \in \R^{l_{2,  i}}$, $y \in \R^{\mathfrak{i}}$ it holds that
\begin{equation}
\label{eq:sum:comp:middle}
\begin{split}
W_{4, L_1 +i} 
\begin{pmatrix}
x\\
y
\end{pmatrix}
+ B_{4, L_1 + i} 
& =
\begin{pmatrix}
W_{2,i+1}&		0\\
0&		W_{3,1} W_{3,2}
\end{pmatrix}
\begin{pmatrix}
x\\
y
\end{pmatrix}
+
\begin{pmatrix}
B_{2, i+1} \\
W_{3,1} B_{3,2} + B_{3,1}
\end{pmatrix}\\
& = 
\begin{pmatrix}
W_{2, i+1} x + B_{2, i+1} \\
W_{3,1} W_{3,2} y + W_{3,1} B_{3,2} + B_{3,1}
\end{pmatrix}\\
& = 
\begin{pmatrix}
W_{2, i+1} x + B_{2, i+1} \\
W_{3,1} (W_{3,2} y +  B_{3,2}) + B_{3,1}
\end{pmatrix}.
\end{split}
\end{equation}
Next observe that for all $x \in \R^{l_{2, L_2 -1}}$, $ y \in \R^{\mathfrak{i}}$ it holds that
\begin{equation}
\label{eq:sum:comp:last}
\begin{split}
W_{4, L_4} 
\begin{pmatrix}
x\\
y
\end{pmatrix} 
+ B_{4, L_4}
& = 
\big( W_{2, L_2}  \quad W_{3,2} \big)
\begin{pmatrix}
x\\
y
\end{pmatrix} 
+ B_{2, L_2} + B_{3,2} \\
& =
(W_{2, L_2} x + B_{2, L_2}) +( W_{3,2} y   + B_{3,2}).
\end{split}
\end{equation}
Moreover, note that the hypothesis that $ \forall \, y \in \R^d \colon (\mathcal{R} \, \mathbb{I}) (y) = y$ ensures that  for all $x \in \R^d$ it holds that
\begin{equation}
W_{3,2} \mathbf{A}_{\mathfrak{i}} (W_{3,1}x + B_{3,1}) + B_{3,2} = x.
\end{equation}
Combining this, \eqref{eq:sum:comp:1}, \eqref{eq:sum:comp:first}, \eqref{eq:sum:comp:middle}, and \eqref{eq:sum:comp:last} proves that for all $ x \in \R^d$ it holds that
\begin{equation}
\label{eq:sum:comp:id}
(\mathcal{R} \psi)(x) =  (\mathcal{R} \phi_1)(x) +  (\mathcal{R} \phi_2) \bigl((\mathcal{R} \phi_1)(x) \bigr).
\end{equation}
Next observe that 
\begin{equation}
\begin{split}
\mathcal{P}(\psi) & = \sum_{n=1}^{L_4} l_{4,n} (l_{4, n-1} +1) 
= \sum_{n=1}^{L_1+ L_2 -1} l_{4,n} (l_{4, n-1} +1)\\
& = \Biggl[ \sum_{n=1}^{L_1-1} l_{4,n} (l_{4, n-1} +1) \Biggr] + \Biggl[ \sum_{n=L_1 +1}^{L_1+ L_2 -2} l_{4,n} (l_{4, n-1} +1) \Biggr] \\
& \quad + l_{4,L_1} (l_{4, L_1-1} +1) + l_{4,L_1 +L_2-1} (l_{4, L_1+L_2 -2} +1)\\
& = \Biggl[ \sum_{n=1}^{L_1-1} l_{1,n} (l_{1, n-1} +1) \Biggr] + \Biggl[ \sum_{n=1}^{L_2 -2} l_{4,L_1 + n} (l_{4, L_1+ n-1} +1) \Biggr] \\
& \quad + (l_{2,1} + \mathfrak{i}) (l_{1, L_1-1} +1) + l_{2, L_2} (l_{2, L_2 -1} + \mathfrak{i}+1).
\end{split}
\end{equation}
The hypothesis that $l_{1,L_1-1} \leq l_{2,L_2-1} + \mathfrak{i}$ therefore assures that
\begin{equation}
\begin{split}
\mathcal{P}(\psi) & = \Biggl[ \sum_{n=1}^{L_1-1} l_{1,n} (l_{1, n-1} +1) \Biggr] + \Biggl[ \sum_{n=1}^{L_2 -2} (l_{2, n +1} + \mathfrak{i}) (l_{2, n} + \mathfrak{i} +1) \Biggr] \\
& \quad + (l_{2,1} + \mathfrak{i}) (l_{1, L_1-1} +1) + l_{2, L_2} (l_{2, L_2 -1} + \mathfrak{i} +1) \\
& \leq \mathcal{P}(\phi_1) + \Biggl[ \sum_{n=1}^{L_2 -2}  \left[ l_{2, n +1}(l_{2,n} +1) + l_{2, n+1} \mathfrak{i} + \mathfrak{i} \, (l_{2,n} +1) + \mathfrak{i}^2 \right] \Biggr] \\
& \quad + (l_{2,1} + \mathfrak{i}) (l_{2,L_2-1} + \mathfrak{i} +1) + l_{2, L_2} (l_{2, L_2 -1} + \mathfrak{i} +1)\\
& \leq \mathcal{P}(\phi_1) + \Biggl[ \sum_{n=2}^{L_2-1} l_{2, n}(l_{2, n-1} +1) \Biggr] + \mathfrak{i}   \Biggl[ \sum_{n=1}^{L_2 -2} ( l_{2, n+1} + l_{2, n} ) \Biggr] \\
& \quad  + (L_2-2)( \mathfrak{i} + \mathfrak{i}^2) +  (l_{2,1} + \mathfrak{i}) (l_{2,L_2-1} + \mathfrak{i} +1) \\
& \quad + l_{2, L_2} (l_{2, L_2 -1} +1) +  l_{2, L_2}  \mathfrak{i}.
\end{split}
\end{equation}
Hence, we obtain that
\begin{equation}
\label{eq:sum:comp:par}
\begin{split}
\mathcal{P}(\psi)  &\leq \mathcal{P}(\phi_1) + \Biggl[ \sum_{n=2}^{L_2} l_{2, n}(l_{2, n-1} +1) \Biggr] + \mathfrak{i}  \Biggl[ \sum_{n=1}^{L_2 -2} ( l_{2, n+1} + l_{2, n} ) \Biggr] \\
& \quad  + (L_2-2)( \mathfrak{i} + \mathfrak{i}^2) + \mathfrak{i} \, (l_{2,1} + l_{2, L_2-1} + l_{2, L_2})\\
& \quad  + l_{2,1}(l_{2, L_2-1} +1) + \mathfrak{i} \, (\mathfrak{i}+1)\\
& \leq \mathcal{P}(\phi_1) + \mathcal{P}(\phi_2) + 2 \, \mathfrak{i} \Biggl[ \sum_{n=1}^{L_2} l_{2,n} \Biggr] + (L_2-1)( \mathfrak{i}  + \mathfrak{i}^2) + l_{2,1}(l_{2, L_2-1} +1).
\end{split}
\end{equation}
Next observe that
\begin{equation}
\begin{split}
\mathcal{P}(\phi_2) = \sum_{n=1}^{L_2} l_{2,n}(l_{2, n-1}+1) \geq 2 \Biggl[ \sum_{n=1}^{L_2} l_{2,n} \Biggr] \geq 2 \, L_2
\end{split}
\end{equation} 
and 
\begin{equation}
\begin{split}
\mathcal{P}(\mathbb{I}) = \mathfrak{i} \, (d+1) + d \, (\mathfrak{i} +1) \geq 2 \, \mathfrak{i} +2.
\end{split}
\end{equation}
This and \eqref{eq:sum:comp:par} demonstrate that 
\begin{equation}
\begin{split}
\mathcal{P}(\psi)  &\leq  \mathcal{P}(\phi_1) + \mathcal{P}(\phi_2)  + \mathfrak{i} \, \mathcal{P}(\phi_2) + ( \mathfrak{i}  + \mathfrak{i}^2) \mathcal{P}(\phi_2)
+ \mathcal{P}(\phi_2) (\mathcal{P}(\phi_2) +1)\\
& = \mathcal{P}(\phi_1) + \mathcal{P}(\phi_2)(2+ 2 \, \mathfrak{i} + \mathfrak{i}^2) + (\mathcal{P}(\phi_2))^2\\
& \leq \mathcal{P}(\phi_1) + \mathcal{P}(\phi_2)( \mathcal{P}(\mathbb{I}) + (\mathcal{P}(\mathbb{I}))^2) + (\mathcal{P}(\phi_2))^2 \\
& \leq \mathcal{P}(\phi_1) + 2 \, \mathcal{P}(\phi_2) (\mathcal{P}(\mathbb{I}))^2 + (\mathcal{P}(\phi_2))^2 \\
& \leq  \mathcal{P}(\phi_1) + (\mathcal{P}(\phi_2) + \mathcal{P}(\mathbb{I}))^3.
\end{split}
\end{equation}
Combining this with \eqref{eq:sum:comp:id} establishes \eqref{eq:sum:comp:ann}. The proof of Proposition~\ref{prop:sum:comp:ANN} is thus completed.
\end{proof}

\subsection{Representations of the $ d $-dimensional identities}
\label{sec:rep_identity}

\begin{lemma}[Artificial neural networks with rectifier functions]
	\label{lem:identity}
	Let $ d \in \N $, 
	let 
	$ \mathbf{A}_n \colon \R^n \to \R^n $, 
	$ n \in \N $, 
	be the  functions 
	which satisfy 
	for all 
	$
	n \in \N
	$,
	$ x = ( x_1, \dots, x_n ) \in \R^n $
	that
	$ 
	\mathbf{A}_n(x)
	=
	( \max\{x_1, 0\}, \ldots, \max\{x_n, 0\} )
	$,
	let 
	\begin{equation}
	\mathcal{N}
	=
	\cup_{ L \in \{ 2, 3, 4, \dots \} }
	\cup_{ ( l_0, l_1, \ldots, l_L ) \in \N^{ L + 1 } }
	(
	\times_{ n = 1 }^L 
	(
	\R^{ l_n \times l_{ n - 1 } } \times \R^{ l_n } 
	)
	)
	,
	\end{equation}
	and let 
	$
	\mathcal{P}
	\colon \mathcal{N} \to \N
	$,
	$ \mathcal{L}
	\colon \mathcal{N} \to \cup_{ L \in \{ 2, 3, 4, \dots \} } \N^{L+1}
	$,
	and
	$
	\mathcal{R} \colon 
	\mathcal{N} 
	\to 
	\cup_{ k, l \in \N } C( \R^k, \R^l )$
	be the functions which satisfy 
	for all 
	$ L \in \{ 2, 3, 4, \dots \} $, 
	$ l_0, l_1, \ldots, l_L \in \N $, 
	$ 
	\Phi = ((W_1, B_1), \ldots, $ $ (W_L, B_L)) \in 
	( \times_{ n = 1 }^L (\R^{ l_n \times l_{n-1} } \times \R^{ l_n } ) )
	$,
	$ x_0 \in \R^{l_0} $, 
	$ \ldots $, 
	$ x_{ L - 1 } \in \R^{ l_{ L - 1 } } $ 
	with 
	$ 
	\forall \, n \in \N \cap [1,L) \colon 
	x_n = \mathbf{A}_{ l_n }( W_n x_{ n - 1 } + B_n )
	$
	that 
	$
	\mathcal{P}( \Phi )
	=
	\textstyle
	\sum\nolimits_{
		n = 1
	}^L
	l_n ( l_{ n - 1 } + 1 )
	$,
	$
	\mathcal{R}(\Phi) \in C( \R^{ l_0 } , \R^{ l_L } )
	$,
	$\mathcal{L}(\Phi) = ( l_0, l_1, \ldots, l_L ) $,
	and
	\begin{equation}
	( \mathcal{R} \Phi )( x_0 ) = W_L x_{L-1} + B_L.
	\end{equation}
	Then there exists 
	$\psi \in \mathcal{N}$ 
	such that for all 
	$x \in \R^d$ 
	it holds that  
	$\mathcal{R}(\psi) \in C(\R^d, \R^d)$,  $\mathcal{L}(\psi) = (d, 2d, d) \in \N^3$, and 
	\begin{equation}
	\label{eq:rectifier}
	(\mathcal{R} \psi) (x) =x.
	\end{equation}
\end{lemma}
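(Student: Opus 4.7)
The plan is to exploit the elementary identity $t = \max\{t,0\} - \max\{-t,0\}$ valid for every $t \in \R$, and to construct the network $\psi$ explicitly with one hidden layer of width $2d$.

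Concretely, I would define $\psi = ((W_1, B_1), (W_2, B_2)) \in \R^{2d \times d} \times \R^{2d} \times \R^{d \times 2d} \times \R^d$ by setting
\begin{equation}
W_1 = \begin{pmatrix} I_d \\ -I_d \end{pmatrix} \in \R^{2d \times d}, \qquad B_1 = 0 \in \R^{2d},
\end{equation}
and
\begin{equation}
W_2 = \bigl( I_d \quad -I_d \bigr) \in \R^{d \times 2d}, \qquad B_2 = 0 \in \R^d,
\end{equation}
where $I_d \in \R^{d \times d}$ denotes the $d$-dimensional identity matrix. By construction $\mathcal{L}(\psi) = (d, 2d, d) \in \N^3$ and $\psi \in \mathcal{N}$.

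It then remains to check \eqref{eq:rectifier}. For every $x = (x_1, \dots, x_d) \in \R^d$ the definition of $\mathbf{A}_{2d}$ gives
\begin{equation}
\mathbf{A}_{2d}(W_1 x + B_1) = \bigl(\max\{x_1, 0\}, \dots, \max\{x_d, 0\}, \max\{-x_1, 0\}, \dots, \max\{-x_d, 0\}\bigr),
\end{equation}
so that the $i$-th coordinate of $W_2 \mathbf{A}_{2d}(W_1 x + B_1) + B_2$ equals $\max\{x_i, 0\} - \max\{-x_i, 0\} = x_i$. By the definition of $\mathcal{R}$ this shows $(\mathcal{R}\psi)(x) = x$ and in particular $\mathcal{R}(\psi) \in C(\R^d, \R^d)$, completing the proof.

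There is essentially no obstacle here: the construction is explicit, and the only ingredient beyond writing down $W_1, W_2, B_1, B_2$ is the pointwise identity $t = \max\{t,0\} - \max\{-t,0\}$, which is precisely the feature of the rectifier that makes the identity representable with a single hidden layer of width $2d$.
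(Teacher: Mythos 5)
Your construction is correct and is essentially the paper's own proof: the paper uses the same identity $t = \max\{t,0\} - \max\{-t,0\}$ with a width-$2d$ hidden layer, merely ordering the hidden units coordinate-by-coordinate (blocks $\bigl(\begin{smallmatrix}1\\-1\end{smallmatrix}\bigr)$ on the diagonal) instead of stacking $I_d$ over $-I_d$, which is just a permutation of your hidden layer.
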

\begin{proof}[Proof of Lemma~\ref{lem:identity}]
	Throughout this proof let $ w_1 \in \R^{2 \times 1}$, $w_2 \in \R^{1 \times 2}$, $(W_1, B_1) \in (\R^{(2d) \times d} \times \R^{2d} )$, and $(W_2, B_2) \in (\R^{d \times (2d)} \times \R^{d} )$ satisfy that
	\begin{equation}
	w_1 = \begin{pmatrix}
	1\\
	-1
	\end{pmatrix}
	\in \R^{2 \times 1}, 
	\quad
	W_1 =
	\begin{pmatrix}
	w_1& 0& 0& \cdots& 0\\
	0& w_1& 0& \cdots& 0\\
	0& 0& w_1& \cdots& 0\\
	\vdots& \vdots& \vdots& \ddots& \vdots\\
	0& 0& 0& \cdots& w_1
	\end{pmatrix} 
	\in \R^{(2d) \times d}, \quad B_1 =0 \in \R^{2d},
	\end{equation}
	\begin{equation}
	w_2 = 
	\begin{pmatrix}
	1& -1
	\end{pmatrix}
	\in \R^{1 \times 2}, 
	\quad
	W_2 = 
	\begin{pmatrix}
	w_2& 0& 0& \cdots& 0\\
	0& w_2& 0& \cdots& 0\\
	0& 0& w_2& \cdots& 0\\
	\vdots& \vdots& \vdots& \ddots& \vdots\\
	0& 0& 0& \cdots& w_2
	\end{pmatrix} 
	\in \R^{d \times (2d)}, \quad B_2 =0 \in \R^{d}
	\end{equation}
	and let $\psi = ((W_{1}, B_{1}),   (W_{2}, B_{2})) \in \mathcal{N}$.
	Observe that for all $x = (x_1, x_2, \ldots, x_d) \in \R^d$ it holds that
	\begin{equation}
	W_1 x + B_1 = W_1 x = 
	\begin{pmatrix}
	w_1 x_1 \\
	w_1 x_2 \\
	\vdots\\
	w_1 x_d
	\end{pmatrix} 
	\in \R^{2d}.
	\end{equation}
	This ensures that for all $x = (x_1, x_2, \ldots, x_d) \in \R^d$ it holds that
	\begin{equation}
	\mathbf{A}_{(2d)} (W_1 x + B_1) = 
	\begin{pmatrix}
	\max\{x_1, 0\}\\
	\max\{-x_1, 0\}\\
	\max\{x_2, 0\}\\
	\max\{-x_2, 0\}\\
	\vdots\\
	\max\{x_d, 0\}\\
	\max\{-x_d, 0\}
	\end{pmatrix} 
	\in \R^{2d}.
	\end{equation}
	Hence, we obtain that for all $x = (x_1, x_2, \ldots, x_d) \in \R^d$ it holds that 
	\begin{equation}
	\begin{split}
	W_2 \mathbf{A}_{(2d)} (W_1 x + B_1) + B_2 &= 
	W_2\begin{pmatrix}
	\max\{x_1, 0\}\\
	\max\{-x_1, 0\}\\
	\max\{x_2, 0\}\\
	\max\{-x_2, 0\}\\
	\vdots\\
	\max\{x_d, 0\}\\
	\max\{-x_d, 0\}
	\end{pmatrix} \\
	&= \begin{pmatrix}
	\max\{x_1, 0\} - \max\{-x_1, 0\}\\
	\max\{x_2, 0\} - \max\{-x_2, 0\}\\
	\vdots\\
	\max\{x_d, 0\} - \max\{-x_d, 0\}\\
	\end{pmatrix}
	= x \in \R^d.
	\end{split}
	\end{equation}
	This demonstrates that for all $x \in \R^d$ it holds that
	\begin{equation}
	\label{eq:lem:identity}
	(\mathcal{R} \psi)(x) = x.
	\end{equation}
	Combining this with the fact that $\mathcal{L}(\psi) = (d, 2d, d) \in \N^3$ establishes \eqref{eq:rectifier}.
	The proof of Lemma~\ref{lem:identity} is thus completed.
\end{proof}

\section[DNN approximations for partial differential equations (PDEs)]{DNN approximations for partial differential\\ equations (PDEs)}
\label{sec:DNN_PDEs}

In this section we establish in our main result in Theorem~\ref{thm:PDE_approx_Lp} in Subsection~\ref{sec:main_result} below
that rectified DNNs have the capacity to approximate solutions of second-order Kolmogorov PDEs
with nonlinear drift and constant diffusion coefficients
without suffering from the curse of dimensionality.
Our proof of Theorem~\ref{thm:PDE_approx_Lp} is based on an application of Corollary~\ref{cor:PDE_approx_Lp_gen} in Subsection~\ref{sec:PDE_approx_Lp} below.
Corollary~\ref{cor:PDE_approx_Lp_gen}, in turn, follows immediately from Proposition~\ref{prop:PDE_approx_Lp} in Subsection~\ref{sec:PDE_approx_Lp} below.
Proposition~\ref{prop:PDE_approx_Lp} is, roughly speaking, a generalized version of Theorem~\ref{thm:PDE_approx_Lp} which covers a more general type of activation function instead of only the rectifier function as the employed activation function.
Proposition~\ref{prop:PDE_approx_Lp} shows for every $ p \in [2,\infty) $ that the $ L^p( \nu_d ; \R ) $-distance between the solution of the PDE at the time of maturity and the DNN is smaller or equal than the prescribed approximation accuracy $ \varepsilon > 0 $,
where $ \nu_d \colon \mathcal{B}( \R^d ) \to [0,1] $, $ d \in \N $, is a suitable sequence of probability measures.
Corollary~\ref{cor:PDE_approx_Lp_gen} slightly generalizes this result, in particular, by assuming that $ p \in (0,\infty) $ is an arbitrary strictly positive real number instead of assuming that $ p \in [2,\infty) $ is greater or equal than $ 2 $ (cf.\ Proposition~\ref{prop:PDE_approx_Lp}).
Finally, in Corollary~\ref{cor:lebesgue} in Subsection~\ref{sec:lebesgue} below we specialize Theorem~\ref{thm:PDE_approx_Lp} in Subsection~\ref{sec:main_result} to the case where for every $ d \in \N $ we have that the probability measure $ \nu_d $ is nothing else but the uniform distribution on the $ d $-dimensional unit cube $ [0,1]^d $.
Theorem~\ref{thm:intro} in the introduction in Section~\ref{sec:intro} above follows directly from Corollary~\ref{cor:lebesgue} in Subsection~\ref{sec:lebesgue}.

\subsection{DNN approximations with general activation functions}
\label{sec:PDE_approx_Lp}


\begin{prop}
\label{prop:PDE_approx_Lp}
Let 
$ T, \kappa \in (0,\infty) $, $\eta \in [1, \infty)$,  $p \in [2, \infty)$,
let 
$
A_d = ( a_{ d, i, j } )_{ (i, j) \in \{ 1, \dots, d \}^2 } $ $ \in \R^{ d \times d }
$,
$ d \in \N $,
be symmetric positive semidefinite matrices, 
for every $ d \in \N $ 
let 
$
\left\| \cdot \right\|_{ \R^d } \colon \R^d \to [0,\infty)
$
be the $ d $-dimensional Euclidean norm
and let $\nu_d  \colon \mathcal{B}(\R^d) \to [0,1] $ be a probability measure,
let
$ f_{0,d} \colon \R^d \to \R $, $ d \in \N $,
and
$ f_{ 1, d } \colon \R^d \to \R^d $,
$ d \in \N $,
be functions,
let 
$ \mathbf{A}_d \colon \R^d \to \R^d $, 
$ d \in \N $, 
and 
$ \mathbf{a} \colon \R \to \R$
be  continuous functions 
which satisfy 
for all 
$
d \in \N
$,
$ x = ( x_1, \dots, x_d ) \in \R^d $
that
$ 
\mathbf{A}_d(x)
=
( \mathbf{a}(x_1), \ldots, \mathbf{a}(x_d) )
$,
let 
\begin{equation}
\mathcal{N}
=
\cup_{ L \in \{ 2, 3, 4, \dots \} }
\cup_{ ( l_0, l_1, \ldots, l_L ) \in \N^{ L + 1 } }
(
\times_{ n = 1 }^L 
(
\R^{ l_n \times l_{ n - 1 } } \times \R^{ l_n } 
)
)
,
\end{equation}
let 
$
\mathcal{P}
\colon \mathcal{N} \to \N
$,
$ \mathcal{L}
\colon \mathcal{N} \to \cup_{ L \in \{ 2, 3, 4, \dots \} } \N^{L+1}
$,
and
$
\mathcal{R} \colon 
\mathcal{N} 
\to 
\cup_{ k, l \in \N } C( \R^k, \R^l )
$
be the functions which satisfy 
for all 
$ L \in \{ 2, 3, 4, \dots \} $, 
$ l_0, l_1, \ldots, l_L \in \N $, 
$ 
\Phi = ((W_1, B_1), \ldots, $ $ (W_L, B_L)) \in 
( \times_{ n = 1 }^L (\R^{ l_n \times l_{n-1} } \times \R^{ l_n } ) )
$,
$ x_0 \in \R^{l_0} $, 
$ \ldots $, 
$ x_{ L - 1 } \in \R^{ l_{ L - 1 } } $ 
with 
$ 
\forall \, n \in \N \cap [1,L) \colon 
x_n = \mathbf{A}_{ l_n }( W_n x_{ n - 1 } + B_n )
$
that 
$
\mathcal{P}( \Phi )
=
\textstyle
\sum\nolimits_{
n = 1
}^L
l_n ( l_{ n - 1 } + 1 )
$,
$
\mathcal{R}(\Phi) \in C( \R^{ l_0 } , \R^{ l_L } )
$, $\mathcal{L}(\Phi) = ( l_0, l_1, \ldots, l_L ) $,
and
\begin{equation}
( \mathcal{R} \Phi )( x_0 ) = W_L x_{L-1} + B_L ,
\end{equation}
let 
$
( \phi^{ m, d }_{ \varepsilon } )_{ 
(m, d, \varepsilon) \in \{ 0, 1 \} \times \N \times (0,1] 
} 
\subseteq \mathcal{N}
$,
$
( \phi^{ 2, d } )_{ 
d \in \N
} 
\subseteq \mathcal{N}
$,
and 
assume for all
$ d \in \N $, 
$ \varepsilon \in (0,1] $, 
$ 
x, y \in \R^d
$
that
$
\mathcal{R}( \phi^{ 0, d }_{ \varepsilon } )
\in 
C( \R^d, \R )
$,
$
\mathcal{R}( \phi^{ 1, d }_{ \varepsilon } )
,
\mathcal{R}( \phi^{ 2, d } )
\in
C( \R^d, \R^d )
$,
$
|
f_{ 0, d }( x )
| 
+
\sum_{ i , j = 1 }^d
| a_{ d, i, j } |
\leq 
\kappa d^{ \kappa }
( 1 + \| x \|^{ \kappa }_{ \R^d } )
$,
$
\| 
f_{ 1, d }( x ) 
- 
f_{ 1, d }( y )
\|_{ \R^d }
\leq 
\kappa 
\| x - y \|_{ \R^d } 
$,
$
\|
( \mathcal{R} \phi^{ 1, d }_{ \varepsilon } )(x)    
\|_{ \R^d }	
\leq 
\kappa ( d^{ \kappa } + \| x \|_{ \R^d } )
$,
$
\mathcal{P}( \phi^{ 2, d } ) +
\sum_{ m = 0 }^1
\mathcal{P}( \phi^{ m, d }_{ \varepsilon } ) 
\leq \kappa d^{ \kappa } \varepsilon^{ - \kappa }
$, $ |( \mathcal{R} \phi^{ 0, d }_{ \varepsilon } )(x) - ( \mathcal{R} \phi^{ 0, d }_{ \varepsilon } )(y)| \leq \kappa d^{\kappa} (1 + \|x\|_{\R^d}^{\kappa} + \|y \|_{\R^d}^{\kappa})\|x-y\|_{\R^d}$, $\mathcal{L}(\phi^{2,d}) \in \N^3 $, $(\mathcal{R}\phi^{2,d})(x) = x$,
$ \int_{\R^d} 
 \|z \|_{ \R^d }^{p (2\kappa+1)}
\, \nu_d (dz) \leq \eta d^{\eta}$,
and
\begin{equation}
\label{eq:appr:coef}
| 
f_{ 0, d }(x) 
- 
( \mathcal{R} \phi^{ 0, d }_{ \varepsilon } )(x)
|
+
\| 
f_{ 1, d }(x) 
- 
( \mathcal{R} \phi^{ 1, d }_{ \varepsilon } )(x)
\|_{ \R^d }
\leq 
\varepsilon \kappa d^{ \kappa }
(
1 + \| x \|^{ \kappa }_{ \R^d }
)
.
\end{equation}
Then 
\begin{enumerate}[(i)]
\item 
\label{item:existence_vis}
there exist unique 
at most polynomially growing functions 
$ u_d \colon [0,T] \times \R^{d} \to \R $,
$ d \in \N $, 
such that
for all $ d \in \N $, $ x \in \R^d $ it holds that
$ u_d( 0, x ) = f_{ 0, d }( x ) $
and such that for all $ d \in \N $ 
it holds that
$ u_d $ is a viscosity solution of
\begin{equation}
\begin{split}
( \tfrac{ \partial }{\partial t} u_d )( t, x ) 
& = 
( \tfrac{ \partial }{\partial x} u_d )( t, x )
\,
f_{ 1, d }( x )
+
\sum_{ i, j = 1 }^d
a_{ d, i, j }
\,
( \tfrac{ \partial^2 }{ \partial x_i \partial x_j } u_d )( t, x )
\end{split}
\end{equation}
for $ ( t, x ) \in (0,T) \times \R^d $
and 
\item
\label{item:main_statement_Lp}
there exist
$
( 
\psi_{ d, \varepsilon } 
)_{ (d , \varepsilon)  \in \N \times (0,1] } \subseteq \mathcal{N}
$,
$
c \in \R
$
such that
for all 
$
d \in \N 
$,
$
\varepsilon \in (0,1] 
$
it holds that
$
\mathcal{P}( \psi_{ d, \varepsilon } ) 
\leq
c \, d^c \varepsilon^{ - c } 
$,
$
\mathcal{R}( \psi_{ d, \varepsilon } )
\in C( \R^{ d }, \R )
$,
and
\begin{equation}
\left[
\int_{ \R^d }
|
u_d(T,x) - ( \mathcal{R} \psi_{ d, \varepsilon } )( x )
|^p
\,
\nu_d(dx)
\right]^{ \nicefrac{ 1 }{ p } }
\leq
\varepsilon 
.
\end{equation}
\end{enumerate}
\end{prop}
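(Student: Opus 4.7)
The plan is to realize $u_d(T,\cdot)$ as a Feynman-Kac expectation, approximate this expectation by the Monte Carlo average of an Euler-Maruyama scheme driven by the approximating DNN $\phi^{1,d}_\delta$, rewrite the random average as a rectified DNN using the calculus of Section~\ref{sec:dnn:calculus}, and finally extract a good deterministic sample via Corollary~\ref{cor:random_field}. For each $d$ I would choose $B_d \in \R^{d \times d}$ with $B_d B_d^\ast = 2 A_d$ (possible since $A_d$ is symmetric positive semidefinite) and work on a probability space carrying i.i.d.\ standard Brownian motions $W^{(1)}, W^{(2)}, \ldots$. Theorem~\ref{thm:feynman} then produces the unique at most polynomially growing viscosity solutions $u_d$ with $u_d(T,x) = \E[f_{0,d}(X^{d,x}_T)]$, where $X^{d,x}$ solves $dX = f_{1,d}(X)\,dt + B_d\,dW^{(1)}$ and $X_0 = x$; this handles item~\ref{item:existence_vis}.

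For item~\ref{item:main_statement_Lp}, fix parameters $\delta \in (0,1]$, $N \in \N$ (with $h := T/N$), and $M \in \N$, all to be chosen polynomially in $d$ and $\varepsilon^{-1}$. For $k \in \{1,\ldots,M\}$ and $x \in \R^d$ define the Euler-Maruyama process $Y^{d,x,k}$ by $Y^{d,x,k}_0 = (\mathcal{R}\phi^{2,d})(x) = x$ and
\begin{equation*}
Y^{d,x,k}_{(j+1)h} = Y^{d,x,k}_{jh} + h\,(\mathcal{R}\phi^{1,d}_\delta)(Y^{d,x,k}_{jh}) + B_d\bigl(W^{(k)}_{(j+1)h} - W^{(k)}_{jh}\bigr),
\end{equation*}
and set $X^d(x,\omega) := M^{-1}\sum_{k=1}^M (\mathcal{R}\phi^{0,d}_\delta)(Y^{d,x,k}_T(\omega))$. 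Proposition~\ref{prop:perturbation_PDE_2}, applied with $\varepsilon_0 = \varepsilon_1 = \delta\kappa d^\kappa$, $\varepsilon_2 = 0$, $\ell = L_1 = \kappa$, bounds the weak error $|u_d(T,x) - \E[(\mathcal{R}\phi^{0,d}_\delta)(Y^{d,x,1}_T)]|$ by $Cd^C(1 + \|x\|_{\R^d}^{C})[\delta + (h/T)^{1/2}]$, and the Marcinkiewicz-Zygmund inequality applied to the i.i.d.\ centred summands, together with the polynomial moment bounds from Lemma~\ref{lem:sde-lp-bound} and Lemma~\ref{l:exp.Gauss}, controls the $L^p(\P)$-norm of the Monte Carlo fluctuation by $Cd^CM^{-1/2}(1+\|x\|_{\R^d}^{C})$. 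Integrating over $\nu_d$ and using the moment hypothesis $\int\|z\|_{\R^d}^{p(2\kappa+1)}\,\nu_d(dz) \leq \eta d^\eta$ yields
\begin{equation*}
\Bigl[\int_{\R^d}\E\bigl[|u_d(T,x) - X^d(x)|^p\bigr]\,\nu_d(dx)\Bigr]^{\nicefrac{1}{p}} \leq Cd^C\bigl[\delta + (h/T)^{1/2} + M^{-1/2}\bigr].
\end{equation*}
At fixed $\omega$, each Euler step takes the form $y \mapsto y + (\mathcal{R}\tilde\phi^{k,j}_{\delta,\omega})(y)$, where $\tilde\phi^{k,j}_{\delta,\omega}$ is obtained from $\phi^{1,d}_\delta$ by rescaling its last weight matrix by $h$ and adding $B_d(W^{(k)}_{(j+1)h}(\omega) - W^{(k)}_{jh}(\omega))$ to its last bias. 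Iterating Proposition~\ref{prop:sum:comp:ANN} (with $\mathbb{I} := \phi^{2,d}$) over the $N$ Euler steps and then composing with $\phi^{0,d}_\delta$ via Proposition~\ref{prop:composition:ANN} produces, for each $k$, a DNN realising $x \mapsto (\mathcal{R}\phi^{0,d}_\delta)(Y^{d,x,k}_T(\omega))$ with parameter count $\leq CNd^C\delta^{-C}$; since all $M$ such DNNs share the same architecture (the $\omega$-dependence lives entirely in biases), Lemma~\ref{lem:sum:ANN} assembles them into a single DNN $\psi^\omega_{d,\varepsilon}$ realising $X^d(\cdot,\omega)$ with $\mathcal{P}(\psi^\omega_{d,\varepsilon}) \leq CM^2Nd^C\delta^{-C}$. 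Choosing $\delta^{-1},N,M$ polynomial in $d\varepsilon^{-1}$ so that the displayed bound is strictly less than $\varepsilon$, Corollary~\ref{cor:random_field} yields an $\omega^\ast$ such that $\psi_{d,\varepsilon} := \psi^{\omega^\ast}_{d,\varepsilon}$ satisfies item~\ref{item:main_statement_Lp}.

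The main obstacle is the DNN bookkeeping: a naive composition of the $N = T/h$ Euler steps would multiply parameter counts and yield exponential-in-$N$ growth. Proposition~\ref{prop:sum:comp:ANN} is designed precisely to avoid this since its parameter bound $\mathcal{P}(\phi_1) + (\mathcal{P}(\phi_2) + \mathcal{P}(\mathbb{I}))^3$ is \emph{additive} in the ``old'' network $\phi_1$, so that chaining $N$ iterations costs only $O(N)$ parameters multiplicatively. Care must be taken to maintain the layer-width hypothesis $l_{1,L_1-1} \leq l_{2,L_2-1} + \mathfrak{i}$ at each iterate, which can be enforced by padding with the artificial identity from Lemma~\ref{lem:identity}, and to absorb all $\omega$-dependence into biases of a fixed architecture so that Lemma~\ref{lem:sum:ANN} applies across the $M$ samples. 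Once these combinatorial points are settled, balancing $\delta$, $h$, and $M$ against the polynomial factors in the error bound is routine.
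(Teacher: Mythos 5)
Your proposal follows essentially the same approach as the paper's proof: Feynman--Kac representation via Theorem~\ref{thm:feynman}, an Euler--Maruyama scheme driven by $\mathcal{R}\phi^{1,d}_\delta$, a decomposition into the weak error (handled by Proposition~\ref{prop:perturbation_PDE_2}) and a Monte Carlo fluctuation (handled by an $L^p$-bound for i.i.d.\ centred sums together with the a~priori moment estimates), assembly of the random average into a rectified DNN via Proposition~\ref{prop:sum:comp:ANN}, Proposition~\ref{prop:composition:ANN}, and Lemma~\ref{lem:sum:ANN}, and extraction of a good realization via Corollary~\ref{cor:random_field}. The paper ties the step size to $\delta$ by setting $h=\delta^2$ rather than keeping $h$ and $\delta$ independent, and the layer-width padding you mention turns out to be automatic (taking $\mathbb{I}=\phi^{2,d}$ with $\mathfrak{i}=2d$ makes the constraint $l_{1,L_1-1}\leq l_{2,L_2-1}+\mathfrak{i}$ hold at every iterate, including the first, since $\mathcal{L}(\phi^{2,d})=(d,2d,d)$), but these are cosmetic differences rather than gaps.
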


\begin{proof}[Proof of Proposition~\ref{prop:PDE_approx_Lp}]
Throughout this proof 
let $ \iota \in \R $ be the real number given by 
$ \iota = \max\{ \kappa, 1 \} $, 
let $ \mathcal{A}_d \in \R^{ d \times d } $, $ d \in \N $, satisfy 
for all $ d \in \N $ that
$
\mathcal{A}_d = \sqrt{ 2 A_d }
$,
let $ \Phi^{ 0, d }_{ \delta } \colon \R^d \to \R $, $ \delta \in (0,1] $, $ d \in \N $,
and $ \Phi^{ 1, d }_{ \delta } \colon \R^d \to \R^d $, $ \delta \in (0,1] $, $  d \in \N $, 
be the functions which satisfy 
for all $ m \in \{ 0, 1\} $, $ d \in \N $,
$ \delta \in (0,1] $,
$ x \in \R^d $
that
\begin{equation}
\Phi^{ m, d }_{ \delta }( x ) 
= 
( 
\mathcal{R} \phi^{ m, d }_{ \delta } 
)( x )
,
\end{equation}
let $ ( \Omega, \mathcal{F}, \P ) $ be a probability space, 
let $ W^{ d, m } \colon [0,T] \times \Omega \to \R^d $, $ d, m \in \N $, 
be independent standard Brownian motions, 
let $ \varpi_{ d, q } \in \R $, $ d \in \N $, $ q \in (0,\infty) $, satisfy 
for all $ q \in (0,\infty) $, $ d \in \N $  that
\begin{equation}
\varpi_{ d, q } 
=
\big(
\E\big[ 
\| \mathcal{A}_d W^{ d, 1 }_T \|^q_{ \R^d }
\big]
\big)^{ \nicefrac{ 1 }{ q } }
,
\end{equation}
let $ X^{ d, x } \colon [0,T] \times \Omega \to \R^d $, $ d \in \N $, $ x \in \R^d $, 
be stochastic processes with continuous sample paths 
which satisfy for all $ x \in \R^d $, $ d \in \N $,  $ t \in [0,T] $ that
\begin{equation}
\label{eq:X_processes}
X^{ d, x }_t 
= x + \int_0^t f_{ 1, d }( X^{ d, x }_s ) \, ds 
+ 
\mathcal{A}_d
W^{ d, 1 }_t 
\end{equation} 
(cf.~Theorem~\ref{thm:feynman}), let $ \chi_{ \delta } \colon [0,T] \to [0,T] $, $ \delta \in (0,1] $,
be the functions which satisfy for all 
$ \delta \in (0,1] $, $ t \in [0,T] $ that
\begin{equation}
\chi_{ \delta }( t )
=
\max\!\left(
\left\{ 
0, \delta^2, 2 \delta^2, 3 \delta^2, \dots
\right\}
\cap 
[0,t]
\right)
,
\end{equation}
let 
$ Y^{ \delta, d, m, x } \colon [0,T] \times \Omega \to \R^d $,
$ \delta \in (0,1] $,
$ d, m \in \N $,
$ x \in \R^d $,
be stochastic processes with continuous sample paths 
which satisfy for all 
$ x \in \R^d $,
$ d, m \in \N $,
$
\delta \in (0,1]
$,
$ t \in [0,T] $
that
\begin{equation}
\label{eq:Y_processes}
Y^{ \delta, d, m, x }_t 
=
x
+
\int_0^t
\Phi^{ 1, d }_{ \delta }\big( 
Y^{ \delta, d, m, x }_{ \chi_{ \delta }( s ) } 
\big)
\,
ds
+
\mathcal{A}_d
W^{ d, m }_t
,
\end{equation} 
let $\mathfrak{M}_{d, \varepsilon} \in \N$, $d \in \N$, $\varepsilon \in (0,1]$, be the natural numbers which satisfy for all 
$\varepsilon \in (0,1]$,  $d \in \N$  that
\begin{equation}
\begin{split}
&
  \mathfrak{M}_{d, \varepsilon} 
\\ & 
  = \min\!\bigg( \N \cap \bigg[ \Big(\tfrac{2^{(\kappa+4)}p \kappa d^{\kappa} e^{\kappa^2 T}}{\varepsilon} \Big)^2 
\Big[ 1+  \big( \kappa d^{ \kappa } T 
+ 
\sqrt{2(p \iota-1) \kappa d^{\kappa} T}
\big)^{p \kappa} 
+  \eta d^{\eta} \Big]^{\nicefrac{2}{p}}, \infty \bigg) \bigg),
\end{split}
\end{equation}
and let $\mathcal{D}_{d, \varepsilon} \in (0, 1]$, $d \in \N$, $\varepsilon \in (0,1]$, be the real numbers which satisfy for all 
$\varepsilon \in (0,1]$, $d \in \N$  that
\begin{align}
\label{eq:control:delta}
& \mathcal{D}_{d, \varepsilon} = 
\varepsilon \! \left[
\max \{2 \kappa d^{\kappa}, 1 \}
+
T^{ -\nicefrac{ 1 }{ 2 } } 
\right]^{-1} \! e^{
	-( 
	3 + 3\kappa +
	[ 
	\kappa^2
	+
	2\kappa
	\iota
	+ 2
	]
	T
	)
} \big| \! \max\{ 1, 2 \kappa (\kappa +1 ) d^{\kappa} \}\big|^{-1} 
2^{ -(2\iota +  1) } 
\nonumber
\\
& 
\cdot 
\bigg[ \big| 2
+ 
\max\{ 1, \kappa d^{\kappa}, \| f_{1,d}( 0 ) \|_{\R^d} \} \max\{ 1, T \} 
+ 
\sqrt{2(2\iota-1)  \kappa d^{\kappa} T}
\big|^{ p\iota + p \kappa}  +  \eta d^{\eta} \bigg]^{ - \frac{ 1 }{ p } }. 
\end{align}
Observe that
the assumption that for all $ d \in \N $, 
$ \varepsilon \in (0,1] $
it holds
that
$
\mathcal{R}( \phi^{ 0, d }_{ \varepsilon } )
\in 
C( \R^d, \R )
$ and 
\eqref{eq:appr:coef} ensure that $f_{0,d} \in 
C( \R^d, \R )$. This, the fact that for all $d \in \N$ it holds that the function $f_{1,d} \colon \R^d \to \R^d$ is Lipschitz continuous, and
 Theorem~\ref{thm:feynman} establish item~\eqref{item:existence_vis}. 
It thus remains to prove item~\eqref{item:main_statement_Lp}.
For this note that 
the fact that $ \forall \, y, z \in \R, \alpha \in [1, \infty) \colon |y + z|^{\alpha}  \leq 2^{\alpha -1}(|y|^{\alpha} + |z|^{\alpha})$ and 
Theorem~\ref{thm:feynman}
ensure that 
for all $ M, d \in \N $, $ \delta \in (0,1] $
it holds that
\begin{equation}
\label{eq:apply_feynman}
\begin{split}
&
\int_{ \R^d }
\E\Bigg[
\bigg|
u_d(T,x) 
- 
\frac{ 1 }{ M } 
\bigg[ 
\textstyle
\sum\limits_{ m = 1 }^M
\Phi^{ 0, d }_{ \delta }(
Y^{ \delta, d, m, x }_T
)
\bigg]
\bigg|^p
\Bigg]
\,
\nu_d (dx)
\\ &
\leq 
2^{p-1} \int_{ \R^d }
\E\Big[
\big|
u_d(T,x) 
- 
\E\big[
\Phi^{ 0, d }_{ \delta }(
Y^{ \delta, d, 1, x }_T
)
\big]
\big|^p
\Big]
\,
\nu_d (dx)
\\ &
+
2^{p-1} \int_{ \R^d }
\E\Bigg[
\bigg|
\E\big[
\Phi^{ 0, d }_{ \delta }(
Y^{ \delta, d, 1, x }_T
)
\big]
- 
\frac{ 1 }{ M } 
\bigg[ 
\textstyle
\sum\limits_{ m = 1 }^M
\Phi^{ 0, d }_{ \delta }(
Y^{ \delta, d, m, x }_T
)
\bigg]
\bigg|^p
\Bigg]
\,
\nu_d (dx)
\\ & =
2^{p-1} \int_{ \R^d }
\big|
\E\big[ 
f_{ 0, d }( X^{ d, x }_T )
\big]
- 
\E\big[
\Phi^{ 0, d }_{ \delta }(
Y^{ \delta, d, 1, x }_T
)
\big]
\big|^p
\,
\nu_d(dx)
\\ &
+
2^{p-1} \int_{ \R^d }
\E\Bigg[
\bigg|
\E\big[
\Phi^{ 0, d }_{ \delta }(
Y^{ \delta, d, 1, x }_T
)
\big]
- 
\frac{ 1 }{ M } 
\bigg[ 
\textstyle
\sum\limits_{ m = 1 }^M
\Phi^{ 0, d }_{ \delta }(
Y^{ \delta, d, m, x }_T
)
\bigg]
\bigg|^p
\Bigg]
\,
\nu_d (dx)
.
\end{split}
\end{equation}
The fact that $ 2 \sqrt{p-1} \leq p$ and, e.g., Grohs et al.~\cite[Corollary~2.5]{GrohsWurstemberger2018}
hence prove that
for all $ M, d \in \N $, $ \delta \in (0,1] $
it holds that
\begin{equation}
\begin{split}
&
\int_{ \R^d }
\E\Bigg[
\bigg|
u_d(T,x) 
- 
\frac{ 1 }{ M } 
\bigg[ 
\textstyle
\sum\limits_{ m = 1 }^M
\Phi^{ 0, d }_{ \delta }(
Y^{ \delta, d, m, x }_T
)
\bigg]
\bigg|^p
\Bigg]
\,
\nu_d (dx)
\\ & \leq
2^{p-1} \int_{ \R^d }
\big|
\E\big[ 
f_{ 0, d }( X^{ d, x }_T )
\big]
- 
\E\big[
\Phi^{ 0, d }_{ \delta }(
Y^{ \delta, d, 1, x }_T
)
\big]
\big|^p
\,
\nu_d (dx)
\\ &
+
\frac{ 2^{p-1} (2 \sqrt{p-1})^p }{ M^{\nicefrac{p}{2}} } 
\int_{ \R^d }
\E\Big[
\big|
\Phi^{ 0, d }_{ \delta }(
Y^{ \delta, d, 1, x }_T
)
-
\E\big[
\Phi^{ 0, d }_{ \delta }(
Y^{ \delta, d, 1, x }_T
)
\big]
\big|^p
\Big]
\,
\nu_d(dx)
\\ & \leq
2^{p-1} \int_{ \R^d }
\big|
\E\big[ 
f_{ 0, d }( X^{ d, x }_T )
\big]
- 
\E\big[
\Phi^{ 0, d }_{ \delta }(
Y^{ \delta, d, 1, x }_T
)
\big]
\big|^p
\,
\nu_d (dx)
\\ &
+
\frac{ 2^{p-1} p^p }{ M^{\nicefrac{p}{2}} } 
\int_{ \R^d }
\E\Big[
\big|
\Phi^{ 0, d }_{ \delta }(
Y^{ \delta, d, 1, x }_T
)
-
\E\big[
\Phi^{ 0, d }_{ \delta }(
Y^{ \delta, d, 1, x }_T
)
\big]
\big|^p
\Big]
\,
\nu_d(dx)
.
\end{split}
\end{equation}
The fact that 
$ \forall \, y, z \in \R, \alpha \in [1, \infty) \colon |y + z|^{\alpha}  \leq 2^{\alpha -1}(|y|^{\alpha} + |z|^{\alpha})$ and Jensen's inequality
therefore assure that
for all $ M, d \in \N $, $ \delta \in (0,1] $
it holds that
\begin{equation}
\label{eq:weak:RHS}
\begin{split}
&
\int_{ \R^d }
\E\Bigg[
\bigg|
u_d(T,x) 
- 
\frac{ 1 }{ M } 
\bigg[ 
\textstyle
\sum\limits_{ m = 1 }^M
\Phi^{ 0, d }_{ \delta }(
Y^{ \delta, d, m, x }_T
)
\bigg]
\bigg|^p
\Bigg]
\,
\nu_d (dx)
\\ & \leq
2^{p-1} \int_{ \R^d }
\big|
\E\big[ 
f_{ 0, d }( X^{ d, x }_T )
\big]
- 
\E\big[
\Phi^{ 0, d }_{ \delta }(
Y^{ \delta, d, 1, x }_T
)
\big]
\big|^p
\,
\nu_d (dx)
\\ &
+
\frac{ 2^{2(p-1)} p^p }{ M^{\nicefrac{p}{2}} } 
\int_{ \R^d }
\E\Big[
\big|
\Phi^{ 0, d }_{ \delta }(
Y^{ \delta, d, 1, x }_T
) \big|^p \Big]
+ \E \Big[ \big|
\E\big[
\Phi^{ 0, d }_{ \delta }(
Y^{ \delta, d, 1, x }_T
)
\big]
\big|^p
\Big]
\,
\nu_d(dx)
\\  & \leq
2^{p-1} \int_{ \R^d }
\big|
\E\big[ 
f_{ 0, d }( X^{ d, x }_T )
\big]
- 
\E\big[
\Phi^{ 0, d }_{ \delta }(
Y^{ \delta, d, 1, x }_T
)
\big]
\big|^p
\,
\nu_d (dx)
\\ &
+
\frac{ 2^{2p-1} p^p }{ M^{\nicefrac{p}{2}} } 
\int_{ \R^d }
\E\Big[
\big|
\Phi^{ 0, d }_{ \delta }(
Y^{ \delta, d, 1, x }_T
) \big|^p \Big]
\,
\nu_d(dx)
.
\end{split}
\end{equation}
Next observe that for all $d \in \N$, $x, y \in \R^d$ it holds that
\begin{equation}
\begin{split}
&2 \int_0^1 \big[r \|x\|_{\R^d} + (1-r) \|y\|_{\R^d}\big]^{\kappa} \, dr \geq  \int_0^1 \big[ r^{\kappa} \|x\|_{\R^d}^{\kappa} + (1-r)^{\kappa} \|y\|_{\R^d}^{\kappa} \big] \, dr \\
&= \big[ \|x\|_{\R^d}^{\kappa} + \|y\|_{\R^d}^{\kappa} \big] \int_0^1 r^{\kappa} \, dr = \frac{\big[ \|x\|_{\R^d}^{\kappa} + \|y\|_{\R^d}^{\kappa} \big]}{\kappa +1}.
\end{split}
\end{equation}
This and  the hypothesis that $ \forall \, d \in \N, \delta \in (0, 1], x, y \in \R^d  \colon |( \mathcal{R} \phi^{ 0, d }_{ \delta } )(x) - ( \mathcal{R} \phi^{ 0, d }_{ \delta } )(y)| $ $ \leq \kappa d^{\kappa} (1 + \|x\|_{\R^d}^{\kappa} + \|y \|_{\R^d}^{\kappa})\|x-y\|_{\R^d} $ prove that for all $ d \in \N$, $\delta \in (0, 1]$, $x, y \in \R^d$ it holds that
\begin{equation}
\begin{split}
&|\Phi_{\delta}^{0,d} (x) - \Phi_{\delta}^{0,d}(y)|  = |( \mathcal{R} \phi^{ 0, d }_{ \delta } )(x) - ( \mathcal{R} \phi^{ 0, d }_{ \delta } )(y)| \\
& \leq \kappa d^{\kappa} (1 +  \|x\|_{\R^d}^{\kappa} + \|y \|_{\R^d}^{\kappa} )\|x-y\|_{\R^d} \\
& \leq \kappa d^{\kappa} \left( 1 + 2 (\kappa +1) \int_0^1 \big[r \|x\|_{\R^d} + (1-r) \|y\|_{\R^d}\big]^{\kappa} \, dr \right) \|x-y\|_{\R^d}\\
& \leq 2 \kappa (\kappa +1 ) d^{\kappa} \left( 1 +  \int_0^1 \big[r \|x\|_{\R^d} + (1-r) \|y\|_{\R^d}\big]^{\kappa} \, dr \right) \|x-y\|_{\R^d}.
\end{split}
\end{equation}
Proposition~\ref{prop:perturbation_PDE_2} (with 
$d = d$, $m = d $, $ \xi = x $, 
$ T = T $, $c = \kappa$, $ C = \kappa d^{\kappa}$, $\varepsilon_0 = \delta \kappa d^{\kappa}$, $\varepsilon_1 = \delta \kappa d^{\kappa}$, $\varepsilon_2 = 0$, $\varsigma_0 = \kappa$, $\varsigma_1 = \kappa$, $\varsigma_2 = 0$, $L_0  =  2 \kappa (\kappa +1 ) d^{\kappa}$, $L_1 = \kappa$, $\ell = \kappa$,
$ h = \min\{\delta^2, T\} $,
$ B = \mathcal{A}_d$, $ p = 2$, $ q = 2$,
$ \left\| \cdot \right\|  = \left\| \cdot \right\|_{\R^d}$, 
$ ( \Omega, \mathcal{F}, \P ) = ( \Omega, \mathcal{F}, \P ) $, 
$ W = W^{d,1} $, 
$ \phi_0 = \Phi_{\delta}^{0,d}$, 
$ f_1 = f_{1,d} $, 
$ \phi_2 = \id_{\R^d}$, 
$ \chi = \chi_{ \min\{\delta, \sqrt{T} \} }$, 
$ f_0 = f_{0,d} $,
$ \phi_1 = \Phi_{\delta}^{1,d} $,
$ (\varpi_r)_{r \in (0,\infty)} = (\varpi_{d,r})_{r \in (0,\infty)} $, 
$X = X^{d,x}$, $Y = Y^{\delta, d, 1,x}$
for $d \in \N$, $x \in \R^d$, $\delta \in (0, 1]$
in the notation of Proposition~\ref{prop:perturbation_PDE_2}) hence ensures that for all $d \in \N$, $\delta \in (0, 1]$, $x \in \R^d$ it holds that
\begin{equation}
\label{eq:weak:error:pointwise}
\begin{split}
&\big|
\E\big[ 
f_{ 0, d }( X^{ d, x }_T )
\big]
- 
\E\big[
\Phi^{ 0, d }_{ \delta }(
Y^{ \delta, d, 1, x }_T
)
\big]
\big|^p \leq  \left[
2 \delta \kappa d^{\kappa}
+
(  \min\{\delta^2, T\} / T )^{ \nicefrac{ 1 }{ 2 } } 
\right]^p
\\ & \cdot 
e^{
p( 
\kappa + 3 + 2 \kappa +
\left[ 
\kappa
\max\{ 
\kappa, 
\kappa
\} 
+
\kappa
\max\{ 
\kappa ,
1
\}
+
\kappa
\max\{ \kappa, 1 \}
+
2
\right]
T
)
}
\big[ 
\| x \|_{\R^d}
+
\max\{ 1, 0 \}
( 1 + 1 )
\\ &
+ 
\max\{ 1, \kappa d^{\kappa}, \| f_{1,d}( 0 ) \|_{\R^d} \} \max\{ 1, T \} 
+ 
\varpi_{d, \max\{ \kappa, 2\kappa, 2, 2\kappa \} } 
\big]^{ p\max\{ 1, \kappa, \kappa \} + p\kappa }\\
&\cdot \big| \! \max\{ 1, 2\kappa(\kappa+1)d^{\kappa} \}\big|^p\\
&\leq  \left[
2\delta \kappa d^{\kappa}
+
( \delta^2 / T )^{ \nicefrac{ 1 }{ 2 } } 
\right]^p e^{
p( 
3 + 3\kappa +
[ 
\kappa^2
+
2\kappa
\iota
+ 2
]
T
)
} \big| \! \max\{ 1, 2 \kappa (\kappa +1 ) d^{\kappa} \}\big|^p\\
& \cdot
\big[ 
\| x \|_{\R^d}
+
2
+ 
\max\{ 1, \kappa d^{\kappa}, \| f_{1,d}( 0 ) \|_{\R^d} \} \max\{ 1, T \} 
+ 
\varpi_{ d, \max\{  2 \kappa, 2 \} } 
\big]^{ p\iota + p \kappa}
.
\end{split}
\end{equation}
Moreover, note that  Lemma~\ref{l:exp.Gauss} assures that for all $q \in [2, \infty)$, $d \in \N$ it holds that
\begin{equation}
\label{eq:norm:BM}
\begin{split}
\varpi_{ d, q }  \leq \sqrt{(q-1)  \operatorname{Trace}(\mathcal{A}_d^* \mathcal{A}_d) T} = \sqrt{2(q-1)  \operatorname{Trace}(A_d) T} \leq \sqrt{2(q-1)  \kappa d^{\kappa} T}.
\end{split}
\end{equation}
This, \eqref{eq:weak:error:pointwise}, and the fact that $ \forall \, y, z \in \R, \alpha \in [1, \infty) \colon |y + z|^{\alpha}  \leq 2^{\alpha -1}(|y|^{\alpha} + |z|^{\alpha})$ demonstrate that for all  $d \in \N$, $\delta \in (0, 1]$ it holds that
\begin{align*}
\label{eq:diff:p}
&\int_{ \R^d }
\big|
\E\big[ 
f_{ 0, d }( X^{ d, x }_T )
\big]
- 
\E\big[
\Phi^{ 0, d }_{ \delta }(
Y^{ \delta, d, 1, x }_T
)
\big]
\big|^p
\,
\nu_d (dx) \\
& \leq \delta^p  \left[
2 \kappa d^{\kappa}
+
T^{ -\nicefrac{ 1 }{ 2 } } 
\right]^p e^{
p( 
3 + 3\kappa +
[ 
\kappa^2
+
2\kappa
\iota
+ 2
]
T
)
} \big| \! \max\{ 1, 2 \kappa (\kappa +1 ) d^{\kappa} \}\big|^p\\
& \cdot \int_{\R^d}
\Big[ 
\|x\|_{\R^d}
+
2
+ 
\max\{ 1, \kappa d^{\kappa}, \| f_{1,d}( 0 ) \|_{\R^d} \} \max\{ 1, T \} 
+ 
\sqrt{2(2\iota-1)  \kappa d^{\kappa} T}
\Big]^{ p\iota + p \kappa} 
\, \nu_d(dx)\\
& \leq \delta^p  \left[
2 \kappa d^{\kappa}
+
T^{ -\nicefrac{ 1 }{ 2 } } 
\right]^p e^{
	p( 
	3 + 3\kappa +
	[ 
	\kappa^2
	+
	2\kappa
	\iota
	+ 2
	]
	T
	)
} \big| \! \max\{ 1, 2 \kappa (\kappa +1 ) d^{\kappa} \}\big|^p\\
& \cdot 2^{ p\iota + p \kappa -1} \bigg( \Big[2
+ 
\max\{ 1, \kappa d^{\kappa}, \| f_{1,d}( 0 ) \|_{\R^d} \} \max\{ 1, T \} 
+ 
\sqrt{2(2\iota-1)  \kappa d^{\kappa} T}
\Big]^{ p\iota + p \kappa} \\
& + \int_{\R^d} 
\|x\|_{\R^d}^{ p\iota + p \kappa}
\, \nu_d(dx) \bigg). \numberthis
\end{align*}
Next note that the fact that $\iota \leq \kappa +1$ and H\"older's inequality prove that for all $d \in \N$ it 
holds that
\begin{equation}
\begin{split}
\int_{\R^d} 
\|x\|_{\R^d}^{ p\iota + p \kappa}
\, \nu_d(dx)  &\leq \left[ \int_{\R^d} 
\|x\|_{\R^d}^{ p(2\kappa +1)}
\, \nu_d(dx)  \right]^{\nicefrac{(\iota +\kappa)}{(2\kappa +1)}} \\
&\leq [\eta d^{\eta}]^{\nicefrac{(\iota +\kappa)}{(2\kappa +1)}} \leq \eta d^{\eta}.
\end{split}
\end{equation}
Combining this and \eqref{eq:diff:p} ensures that for all  $d \in \N$, $\delta \in (0, 1]$ it holds that 
\begin{align*}
\label{eq:weak:RHS:1}
&2^{p-1} \int_{ \R^d }
\big|
\E\big[ 
f_{ 0, d }( X^{ d, x }_T )
\big]
- 
\E\big[
\Phi^{ 0, d }_{ \delta }(
Y^{ \delta, d, 1, x }_T
)
\big]
\big|^p
\,
\nu_d (dx) \\
& \leq \delta^p  \left[
2 \kappa d^{\kappa}
+
T^{ -\nicefrac{ 1 }{ 2 } } 
\right]^p e^{
	p( 
	3 + 3\kappa +
	[ 
	\kappa^2
	+
	2\kappa
	\iota
	+ 2
	]
	T
	)
} \big| \! \max\{ 1, 2 \kappa (\kappa +1 ) d^{\kappa} \}\big|^p 2^{ p (2\iota + 1) -2} \numberthis \\
& \cdot  \bigg( \Big[2
+ 
\max\{ 1, \kappa d^{\kappa}, \| f_{1,d}( 0 ) \|_{\R^d} \} \max\{ 1, T \} 
+ 
\sqrt{2(2\iota-1)  \kappa d^{\kappa} T}
\Big]^{ p\iota + p \kappa}  + \eta d^{\eta} \bigg).
\end{align*}
Next observe that for all 
$ d \in \N $, $ \delta \in (0,1] $, $ x \in \R^d $
it holds that
\begin{equation}
\label{eq:Phi_0_d_estimate}
\begin{split}
| 
\Phi^{ 0, d }_{ \delta }( x ) 
|
& \leq
| 
\Phi^{ 0, d }_{ \delta }( x ) 
-
f_{ 0, d }( x )
|
+
| 
f_{ 0, d }( x ) 
|
\\ &
\leq
\delta \kappa d^{ \kappa } 
( 1 + \| x \|_{ \R^d }^{ \kappa } )
+
\kappa d^{ \kappa } 
( 1 + \| x \|_{ \R^d }^{ \kappa } )
\leq
2 \kappa d^{ \kappa } 
( 1 + \| x \|_{ \R^d }^{ \kappa } )
.
\end{split}
\end{equation}
Moreover, note that 
Lemma~\ref{lem:sde-lp-bound} 
shows that for all 
$ q \in [1,\infty) $, 
$ \delta \in (0,1] $,
$ d, m \in \N $,
  $ x \in \R^d $
it holds that
\begin{equation}
\begin{split}
\big(
\E\big[
\|
Y^{ \delta, d, m, x }_T
\|^q_{ \R^d }
\big]
\big)^{ \nicefrac{ 1 }{ q } }
& \leq 
\Big(
\| x \|_{ \R^d }
+
\kappa d^{ \kappa } T 
+ 
\big(
\E\big[ 
\| \mathcal{A}_d W^{ d, m }_T \|^q_{ \R^d }
\big]
\big)^{ \nicefrac{ 1 }{ q } }
\Big)
\,
e^{
\kappa T
}
\\ & =
\big(
\|x \|_{ \R^d }
+
\kappa d^{ \kappa } T 
+ 
\varpi_{ d, q }
\big)
\,
e^{
\kappa T
}
.
\end{split}
\end{equation}
This and \eqref{eq:norm:BM} demonstrate that  for all $q \in [2, \infty)$,
$ \delta \in (0,1] $,
$ d, m \in \N $,
  $ x \in \R^d $
it holds that
\begin{equation}
\begin{split}
\big(
\E\big[
\|
Y^{ \delta, d, m, x }_T
\|^q_{ \R^d }
\big]
\big)^{ \nicefrac{ 1 }{ q } } \leq 
\big(
\|x \|_{ \R^d }
+
\kappa d^{ \kappa } T 
+ 
\sqrt{2(q-1) \kappa d^{\kappa} T}
\big)
\,
e^{
\kappa T
}
.
\end{split}
\end{equation}
Combining this with \eqref{eq:Phi_0_d_estimate},
the fact that $ \forall \, y, z \in \R, \alpha \in [1, \infty) \colon |y + z|^{\alpha}  \leq 2^{\alpha -1}(|y|^{\alpha} + |z|^{\alpha})$, and H\"older's inequality ensures 
that for all 
$ \delta \in (0,1] $, $ d \in \N $
it holds that
\begin{equation}
\label{eq:average:expactation}
\begin{split}
&  \int_{ \R^d }
\E\Big[
\big|
\Phi^{ 0, d }_{ \delta }(
Y^{ \delta, d, 1, x }_T
)
\big|^p
\Big]
\,
\nu_d (dx)
\leq
\big( 2 \kappa d^{ \kappa } \big)^{p}
 \int_{\R^d} \E  \Big[ \Big( 1+ \|
Y^{ \delta, d, 1, x }_T
\|^{ \kappa }_{ \R^d } \Big)^p 
\Big]
\, \nu_d (dx)
\\ &
\leq
\big( 2 \kappa d^{ \kappa } \big)^{p} 2^{p-1}
\int_{\R^d} \E  \Big[  1+ \|
Y^{ \delta, d, 1, x }_T
\|^{ p\kappa }_{ \R^d } 
\Big]
\, \nu_d (dx)
\\ &
\leq
\big( 4 \kappa d^{ \kappa } \big)^{p} 
\left( 1+ \int_{\R^d}  \E  \Big[   \|
Y^{ \delta, d, 1, x }_T
\|^{ p\kappa }_{ \R^d } 
\Big]
\, \nu_d (dx) \right)
\\ &
\leq
\big( 4 \kappa d^{ \kappa } \big)^{p} 
\left( 1+ \int_{\R^d} \Big| \E  \Big[   \|
Y^{ \delta, d, 1, x }_T
\|^{ p\iota }_{ \R^d } 
\Big] \Big|^{\nicefrac{\kappa}{\iota}}
\, \nu_d (dx) \right)
\\ &
\leq
\big( 4 \kappa d^{ \kappa } \big)^{p} 
\left( 1+ \int_{\R^d}   \Big[\big(
\|x \|_{ \R^d }
+
\kappa d^{ \kappa } T 
+ 
\sqrt{2(p \iota-1) \kappa d^{\kappa} T}
\big)
\,
e^{
	\kappa T
}
\Big]^{p \kappa}
\, \nu_d (dx) \right).
\end{split}
\end{equation}
The fact that  $ \forall \, y, z \in \R, \alpha \in [0, \infty) \colon |y + z|^{\alpha}  \leq 2^{\alpha }(|y|^{\alpha} + |z|^{\alpha})$ hence proves that
for all 
$ \delta \in (0,1] $, $ d \in \N $
it holds that
\begin{align*}
\label{eq:Phi:p}
& \int_{ \R^d }
\E\Big[
\big|
\Phi^{ 0, d }_{ \delta }(
Y^{ \delta, d, 1, x }_T
)
\big|^p
\Big]
\,
\nu_d (dx) \numberthis \\
& \leq 
\big( 4 \kappa d^{ \kappa } \big)^{p} 
\left( 1+ 2^{p\kappa} \,
e^{
	p \kappa^2 T
}   \left[ \int_{\R^d} 
\|x \|_{ \R^d }^{p \kappa}
\, \nu_d (dx)
+
 \Big( \kappa d^{ \kappa } T 
+ 
\sqrt{2(p \iota-1) \kappa d^{\kappa} T}
\Big)^{p \kappa} \right] \right)\\
& \leq 
\big( 4 \kappa d^{ \kappa } \big)^{p} 2^{p\kappa} \,
e^{
	p \kappa^2 T
}
\left[ 1+  \Big( \kappa d^{ \kappa } T 
+ 
\sqrt{2(p \iota-1) \kappa d^{\kappa} T}
\Big)^{p \kappa} 
+   \int_{\R^d} 
\|x \|_{ \R^d }^{p \kappa}
\, \nu_d (dx) \right].
\end{align*}
Next note that H\"older's inequality shows that for all $d \in \N$ it holds that
\begin{equation}
\begin{split}
\int_{\R^d} 
\|x\|_{\R^d}^{  p \kappa}
\, \nu_d(dx)  &\leq \left[ \int_{\R^d} 
\|x\|_{\R^d}^{ p(2\kappa +1)}
\, \nu_d(dx)  \right]^{\nicefrac{\kappa}{(2\kappa +1)}} \\
&\leq [\eta d^{\eta}]^{\nicefrac{\kappa}{(2\kappa +1)}} \leq \eta d^{\eta}.
\end{split}
\end{equation}
Combining this and \eqref{eq:Phi:p} ensures 
 that for all 
$ \delta \in (0,1] $, $ d \in \N $
it holds that
\begin{align*}
& 2^{2p-1} p^p \int_{ \R^d }
\E\Big[
\big|
\Phi^{ 0, d }_{ \delta }(
Y^{ \delta, d, 1, x }_T
)
\big|^p
\Big]
\,
\nu_d (dx) \numberthis \\
& \leq 
\frac{
\big( 2^{\kappa+4} p \kappa d^{ \kappa } e^{
	 \kappa^2 T
} \big)^{p}}{2}
\left[ 1+  \Big( \kappa d^{ \kappa } T 
+ 
\sqrt{2 (p \iota-1) \kappa d^{\kappa} T}
\Big)^{p \kappa} 
+   \eta d^{\eta} \right].
\end{align*}
This, \eqref{eq:weak:RHS}, and \eqref{eq:weak:RHS:1} prove that for all $d \in \N$, $\varepsilon \in (0, 1]$ it holds that
\begin{equation}
\label{eq:lp:norm}
\begin{split}
\int_{ \R^d }
\E\Bigg[
\bigg|
u_d(T,x) 
- 
\frac{ 1 }{ \mathfrak{M}_{d, \varepsilon} } 
\bigg[ 
\textstyle
\sum\limits_{ m = 1 }^{\mathfrak{M}_{d, \varepsilon}}
\Phi^{ 0, d }_{ \mathcal{D}_{d, \varepsilon} }\Big(
Y^{ \mathcal{D}_{d, \varepsilon}, d, m, x }_T
\Big)
\bigg]
\bigg|^p
\Bigg]
\,
\nu_d(dx) \leq \frac{\varepsilon^p}{4} + \frac{\varepsilon^p}{2} < \varepsilon^p
.
\end{split}
\end{equation}
Corollary~\ref{cor:random_field} therefore assures that for all $d \in \N$, $\varepsilon \in (0, 1]$ there exists $\mathfrak{w}_{d, \varepsilon} \in \Omega$ such that
\begin{equation}
\label{eq:bar:omega}
\begin{split}
\left[
\int_{ \R^d }
\bigg|
u_d(T,x) 
- 
\frac{ 1 }{ \mathfrak{M}_{d, \varepsilon} } 
\bigg[ 
\textstyle
\sum\limits_{ m = 1 }^{\mathfrak{M}_{d, \varepsilon}}
\big(\mathcal{R} \phi^{ 0, d }_{ \mathcal{D}_{d, \varepsilon} } \big)\Big(
Y^{ \mathcal{D}_{d, \varepsilon}, d, m, x }_T (\mathfrak{w}_{d, \varepsilon})
\Big)
\bigg]
\bigg|^p
\,
\nu_d(dx)  \right]^{ \nicefrac{ 1 }{ p } } < \varepsilon
.
\end{split}
\end{equation}
Moreover, note that  for all $d \in \N$, $\varepsilon \in (0, 1]$ it holds that
\begin{align*}
\mathfrak{M}_{d, \varepsilon} & \leq  \Big(\tfrac{2^{(\kappa+4)}p \kappa d^{\kappa} e^{\kappa^2 T}}{\varepsilon} \Big)^2 
\Big[ 1+  \big( \kappa d^{ \kappa } T 
+ 
\sqrt{2(p \iota-1) \kappa d^{\kappa} T}
\big)^{p \kappa} 
+  \eta d^{\eta} \Big]^{\nicefrac{2}{p}} + 1 \\
& = 2^{2(\kappa+4)} p^2 \kappa^2 d^{2\kappa} e^{2\kappa^2 T}  
\Big[ 1+  \big( \kappa d^{ \kappa } T 
+ 
\sqrt{2(p \iota-1) \kappa d^{\kappa} T}
\big)^{p \kappa} 
+  \eta d^{\eta} \Big]^{\nicefrac{2}{p}} \varepsilon^{-2} + 1\\
& \leq 2^{2(\kappa+4)} p^2 \kappa^2 d^{2\kappa} e^{2\kappa^2 T}  
\Big[ 1+  \big( \kappa d^{ \kappa } T 
+ 
\sqrt{2(p \iota-1) \kappa d^{\kappa} T}
\big)^{p \kappa} 
+  \eta d^{\eta} \Big] \varepsilon^{-2} + 1\\
& \leq 2^{2(\kappa+4)} p^2 \kappa^2 d^{2\kappa} e^{2\kappa^2 T}  
\Big[ 1+  \big(  \iota d^{ \iota } T 
+ p \iota d^{\iota} \sqrt{T}
\big)^{p \kappa} 
+  \eta d^{\eta} \Big] \varepsilon^{-2} + 1. \numberthis
\end{align*}
Hence, we obtain that for all $d \in \N$, $\varepsilon \in (0, 1]$ it holds that
\begin{equation}
\label{eq:bound:M}
\begin{split}
\mathfrak{M}_{d, \varepsilon} &  \leq 2^{2(\kappa+4)} p^2 \kappa^2 d^{2\kappa} e^{2\kappa^2 T}  
\big[ 1+  \big| 2 p  \iota d^{ \iota } \max\{1, T\}|^{p \kappa} 
+  \eta d^{\eta} \big] \varepsilon^{-2} + 1 \\
& \leq 2^{2(\kappa+4)} p^2 \kappa^2 d^{2\kappa} e^{2\kappa^2 T} d^{\max\{p \kappa \iota, \eta\}} 
\big[ 1+  | 2 p  \iota \max\{1, T\}|^{p \kappa} 
+  \eta \big] \varepsilon^{-2} + 1\\
& \leq 2^{2(\kappa+4)} p^2 \kappa^2  e^{2\kappa^2 T}
\big[ 1+  | 2 p  \iota \max\{1, T\}|^{p \kappa} 
+  \eta \big]d^{p \kappa \iota + \eta + 2\kappa}  \varepsilon^{-2} + 1\\
& \leq 2^{2(\kappa+4)} p^2 \iota^2  e^{2\kappa^2 T}
\big[ 1+  | 2 p  \iota \max\{1, T\}|^{p \kappa} 
+  \eta \big]d^{p \kappa \iota + \eta + 2\kappa}  \varepsilon^{-2} + 1\\
&  \leq 2^{2(\kappa+4)} p^2 \iota^2  e^{2\kappa^2 T}
\big[ 2+  | 2 p  \iota \max\{1, T\}|^{p \kappa} 
+  \eta \big]d^{p \kappa \iota + \eta + 2\kappa}  \varepsilon^{-2}.
\end{split}
\end{equation}
Next observe that for all $d \in \N$, $\varepsilon \in (0, 1]$ it holds that
\begin{equation}
\begin{split}
\| f_{1,d}( 0 ) \|_{\R^d} &\leq \| 
f_{ 1, d }(0) 
- 
( \mathcal{R} \phi^{ 1, d }_{ \varepsilon } )(0)
\|_{ \R^d } + \|( \mathcal{R} \phi^{ 1, d }_{ \varepsilon } )(0)
\|_{ \R^d } \\
& \leq \varepsilon \kappa d^{\kappa} + \kappa d^{\kappa} \leq 2 \kappa d^{\kappa} \leq 2 \iota d^{\kappa}.
\end{split}
\end{equation}
Therefore, we obtain that for all $d \in \N$, $\varepsilon \in (0,1]$ it holds that
\begin{align*}
&\mathcal{D}_{d, \varepsilon} = \varepsilon \! \left[
\max \{2 \kappa d^{\kappa}, 1 \}
+
T^{ -\nicefrac{ 1 }{ 2 } } 
\right]^{-1} e^{
	-( 
	3 + 3\kappa +
	[ 
	\kappa^2
	+
	2\kappa
	\iota
	+ 2
	]
	T
	)
} \big| \! \max\{ 1, 2 \kappa (\kappa +1 ) d^{\kappa} \}\big|^{-1} 2^{ -(2\iota +  1) } \\
& \cdot \bigg( \Big[2
+ 
\max\{ 1, \kappa d^{\kappa}, \| f_{1,d}( 0 ) \|_{\R^d} \} \max\{ 1, T \} 
+ 
\sqrt{2(2\iota-1)  \kappa d^{\kappa} T}
\Big]^{ p\iota + p \kappa}  +  \eta d^{\eta} \bigg)^{\nicefrac{-1}{p}} \\
& \geq \varepsilon \! \left[
\max \{2 \kappa d^{\kappa}, 1 \}
+
T^{ -\nicefrac{ 1 }{ 2 } } 
\right]^{-1} e^{
	-( 
	3 + 3\kappa +
	[ 
	\kappa^2
	+
	2\kappa
	\iota
	+ 2
	]
	T
	)
} \big| \! \max\{ 1, 2 \kappa (\kappa +1 ) d^{\kappa} \}\big|^{-1}  2^{ -(2\iota +  1) }\\
& \cdot \bigg( \Big[2
+ 
\max\{ 1, \kappa d^{\kappa}, 2\iota d^{\kappa} \} \max\{ 1, T \} 
+ 
\sqrt{2 (2\iota-1)  \kappa d^{\kappa} T}
\Big]^{ p\iota + p \kappa}  +  \eta d^{\eta} \bigg)^{\nicefrac{-1}{p}} \\
& \geq \varepsilon \! \left[
2\max \{2 \kappa d^{\kappa}, 1, T^{ -\nicefrac{ 1 }{ 2 } }  \}
\right]^{-1} e^{
-( 
3 + 3\iota +
[ 
3\iota^2
+ 2
]
T
)
} \big| \!\max\{ 1, 2 \kappa (\kappa +1 ) d^{\kappa} \}\big|^{-1} 2^{ -(2\iota +  1) }\\
& \cdot \bigg( \Big[2
+ 
2 \iota d^{\kappa} \max\{ 1, T \} 
+ 2 \iota d^{\kappa} \sqrt{T}
\Big]^{ p\iota + p \kappa}  +  \eta d^{\eta} \bigg)^{\nicefrac{-1}{p}}. \numberthis
\end{align*}
This proves that for all $d \in \N$, $\varepsilon \in (0,1]$ it holds that
\begin{equation}
\begin{split}
\mathcal{D}_{d, \varepsilon} &\geq \varepsilon \big|
4 \iota d^{\kappa} \max \{ 1, T^{ -\nicefrac{ 1 }{ 2 } }  \}
\big|^{-1} e^{
	-( 3\iota^2+3)(T+1)
} \, \big|  4 \iota^2 d^{\kappa} \big|^{-1} 2^{ -(2\iota +  1) }\\
& \cdot  \Big( \big[
6\iota d^{\kappa}  \max\{ 1, T \} 
\big]^{ p\iota + p \kappa}  +  \eta d^{\eta} \Big)^{\nicefrac{-1}{p}}\\
& \geq \varepsilon \big|
4 \iota d^{\kappa} \max \{ 1, T^{ -\nicefrac{ 1 }{ 2 } }  \}
\big|^{-1} e^{
	-( 3\iota^2+3)(T+1)
} \, \big|  4 \iota^2 d^{\kappa} \big|^{-1} 2^{ -(2\iota +  1) }\\
& \cdot  \Big( \big[
6\iota \max\{ 1, T \} 
\big]^{ p\iota + p \kappa}  +  \eta \Big)^{\nicefrac{-1}{p}} d^{\nicefrac{[-\kappa(p \iota + p \kappa) - \eta]}{p}}\\
& \geq \big|
4 \iota \max \{ 1, T^{ -\nicefrac{ 1 }{ 2 } }  \}
\big|^{-1} e^{
	-( 3\iota^2+3)(T+1)
} \, \big|  4 \iota^2  \big|^{-1}  2^{ -(2\iota +  1) } \\
& \cdot   \big( [
6 \iota   \max\{ 1, T \} 
]^{ p\iota + p \kappa}  +  \eta \big)^{\nicefrac{-1}{p}} d^{- (2\kappa + \kappa(\kappa + \iota) + \eta )} \varepsilon.
\end{split}
\end{equation}
Hence, we obtain that for all $d \in \N$, $\varepsilon \in (0, 1]$ it holds that
\begin{equation}
\label{eq:bound:delta}
\begin{split}
\mathcal{D}_{d, \varepsilon} &\geq |\!
 \min \{ 1, T^{ \nicefrac{ 1 }{ 2 } }  \}
| e^{
	-( 3\iota^2+3)(T+1)
} \, \iota^{-3}  2^{ -(2\iota +  5) }     \\
& \cdot \big( [
6 \iota  \max\{ 1, T \} 
]^{ p\iota + p \kappa}  +  \eta \big)^{\nicefrac{-1}{p}} d^{- (\kappa( 2+ \kappa + \iota) + \eta )} \varepsilon.
\end{split}
\end{equation}
Moreover, note that Proposition~\ref{prop:sum:comp:ANN} ensures that for all $\delta \in (0, 1]$, $d \in \N$, $t \in [0,T]$, $\omega \in \Omega$ there exist $(\psi_{ \delta, d,m, t, \omega})_{m \in \N} \subseteq \mathcal{N}$ such that for all $x \in \R^d$, $m \in \N$ it holds that $\mathcal{R}(\psi_{ \delta, d,m, t, \omega}) \in C(\R^d, \R^d)$, $\mathcal{P}(\psi_{ \delta, d,m, t, \omega}) \leq \mathcal{P}(\phi^{2,d})+  (\mathcal{P}(\phi^{2,d}) + \mathcal{P}(\phi^{1,d}_{\delta}))^3 [\chi_{ \delta }(t) /\delta^2 +1] $, $\mathcal{L}(\psi_{ \delta, d,m, t, \omega})= \mathcal{L}(\psi_{ \delta, d,1, t, \omega})$, and 
\begin{equation}
\label{eq:sum:comp:ANN}
(\mathcal{R} \psi_{ \delta, d,m, t, \omega})(x) = Y^{ \delta, d, m, x }_t (\omega).
\end{equation}
This demonstrates that for all  $\delta \in (0, 1]$, $d, m \in \N$,  $\omega \in \Omega$ it holds that 
\begin{equation}
\label{eq:psi:bound}
\begin{split}
\mathcal{P}(\psi_{ \delta, d,m, T, \omega}) &\leq \big(\mathcal{P}(\phi^{2,d}) + \mathcal{P}(\phi^{1,d}_{\delta})\big)^3 \big[\chi_{ \delta }(T) /\delta^2 +2\big]\\
& \leq \big(\kappa d^{\kappa} \delta^{-\kappa}\big)^3 \big[T /\delta^2 +2\big] \leq \kappa^3 (T+2) d^{3\kappa} \delta^{-3\kappa-2}.
\end{split}	
\end{equation}
Proposition~\ref{prop:composition:ANN} hence proves that for all $\delta \in (0, 1]$, $d \in \N$,  $\omega \in \Omega$  there exist $(\varphi_{\delta, d, m, \omega})_{m \in \N} \subseteq \mathcal{N}$ such that for all $x \in \R^d$, $m \in \N$ it holds that $\mathcal{R}(\varphi_{\delta, d, m, \omega}) \in C(\R^d, \R)$, $\mathcal{P}(\varphi_{\delta, d, m, \omega}) \leq \mathcal{P}(\phi^{2, d})(\mathcal{P}(\phi^{0, d}_{\delta}) +  \mathcal{P}(\psi_{ \delta, d,m, T, \omega}) )$, $\mathcal{L}(\varphi_{\delta, d, m, \omega})= \mathcal{L}(\varphi_{\delta, d, 1, \omega})$, and 
\begin{equation}
\label{eq:bar:psi}
(\mathcal{R} \varphi_{\delta, d, m, \omega})(x) = \Phi^{0,d}_{\delta}(Y^{\delta,d,m,x}_T(\omega)).
\end{equation}
This and \eqref{eq:psi:bound} ensure that for all  $\delta \in (0, 1]$, $d, m \in \N$,  $\omega \in \Omega$ it holds that
\begin{equation}
\begin{split}
\mathcal{P}(\varphi_{\delta, d, m, \omega}) &\leq \kappa d^{\kappa} \big[\kappa d^{\kappa} \delta^{-\kappa} +  \kappa^3 (T+2) d^{3\kappa} \delta^{-3\kappa-2} \big]\\
& \leq \kappa^2 d^{4\kappa} \delta^{-3\kappa-2}(T+2) [1+\kappa^2] \leq 2\iota^2 \kappa^2 (T+2) d^{4\kappa} \delta^{-3\kappa-2}.
\end{split}
\end{equation}
Lemma~\ref{lem:sum:ANN} and \eqref{eq:bar:psi} therefore show that for all $M, d \in \N$, $\delta \in (0,1]$, $\omega \in \Omega$ there exists $\Psi_{M, d, \delta, \omega} \in \mathcal{N}$ such that for all $x \in \R^d$ it holds that
$\mathcal{R}(\Psi_{M, d, \delta, \omega}) \in C(\R^d, \R)$, $\mathcal{P}(\Psi_{M, d, \delta, \omega}) \leq 2 M^2 \iota^2 \kappa^2 (T+2) d^{4\kappa} \delta^{-3\kappa-2}$, and 
\begin{equation}
(\mathcal{R}\Psi_{M, d, \delta, \omega})(x) = \frac{1}{M} \sum_{m=1}^M \Phi^{0,d}_{\delta}(Y^{\delta,d,m,x}_T(\omega)).
\end{equation}
This, \eqref{eq:bound:M}, and \eqref{eq:bound:delta} assure that for all $d \in \N$, $\varepsilon \in (0, 1]$, $\omega \in \Omega$ it holds that
\begin{align*}
&\mathcal{P}(\Psi_{\mathfrak{M}_{d, \varepsilon}, d, \mathcal{D}_{d, \varepsilon}, \omega}) \leq 2 |\mathfrak{M}_{d, \varepsilon}|^2 \iota^2 \kappa^2 (T+2) d^{4\kappa} |\mathcal{D}_{d, \varepsilon}|^{-3\kappa-2} \\
& \leq 2 \Big( 2^{2(\kappa+4)} p^2 \iota^2  e^{2\kappa^2 T}
\big[ 2+  | 2 p  \iota \max\{1, T\}|^{p \kappa} 
+  \eta \big]d^{p \kappa \iota + \eta + 2\kappa}  \varepsilon^{-2} \Big)^2   \\
& \cdot \iota^2 \kappa^2 (T+2) d^{4\kappa} \Big[ |\!
\min \{ 1, T^{ \nicefrac{ 1 }{ 2 } }  \}
| e^{
	-( 3\iota^2+3)(T+1)
} \, \iota^{-3}  2^{ -(2\iota +  5) }     \\
& \cdot \big( [
6 \iota  \max\{ 1, T \} 
]^{ p\iota + p \kappa}  +  \eta \big)^{\nicefrac{-1}{p}} d^{- (\kappa( 2+ \kappa + \iota) + \eta )} \varepsilon \Big]^{-3\kappa-2} \numberthis\\
& = 2 \Big( 2^{2(\kappa+4)} p^2 \iota^2  e^{2\kappa^2 T}
\big[ 2+  | 2 p  \iota \max\{1, T\}|^{p \kappa} 
+  \eta \big]   \Big)^2   \iota^2 \kappa^2 (T+2) \\
& \cdot  \Big[  |\!
\min \{ 1, T^{ \nicefrac{ 1 }{ 2 } }  \}
| e^{
	-( 3\iota^2+3)(T+1)
} \, \iota^{-3}  2^{ -(2\iota +  5) }  \big( [
6\iota   \max\{ 1, T \} 
]^{ p\iota + p \kappa}  +  \eta \big)^{\nicefrac{-1}{p}}  \Big]^{-3\kappa-2}   \\
& \cdot  d^{2(p \kappa \iota + \eta + 4\kappa) + ( \kappa( 2+\kappa + \iota) + \eta ) (3\kappa +2)} \varepsilon^{-3\kappa-6}.
\end{align*}
Combining this and \eqref{eq:bar:omega} finishes the proof of  item~\eqref{item:main_statement_Lp}. 
The proof of Proposition~\ref{prop:PDE_approx_Lp} is thus completed.
\end{proof}



\begin{cor}
	\label{cor:PDE_approx_Lp_gen}
Let 
$ T, \kappa, \eta, p \in (0,\infty) $, 
let 
$
A_d = ( a_{ d, i, j } )_{ (i, j) \in \{ 1, \dots, d \}^2 } \in \R^{ d \times d }
$,
$ d \in \N $,
be symmetric positive semidefinite matrices, 
for every $ d \in \N $ 
let 
$
\left\| \cdot \right\|_{ \R^d } \colon \R^d \to [0,\infty)
$
be the $ d $-dimensional Euclidean norm
and let $\nu_d  \colon \mathcal{B}(\R^d) \to [0,1] $ be a probability measure on $\R^d$,
let
$ f_{0,d} \colon \R^d \to \R $, $ d \in \N $,
and
$ f_{ 1, d } \colon \R^d \to \R^d $,
$ d \in \N $,
be functions,
let 
$ \mathbf{A}_d \colon \R^d \to \R^d $, 
$ d \in \N $, 
and 
$ \mathbf{a} \colon \R \to \R$
be  continuous functions 
which satisfy 
for all 
$
d \in \N
$,
$ x = ( x_1, \dots, x_d ) \in \R^d $
that
$ 
\mathbf{A}_d(x)
=
( \mathbf{a}(x_1), \ldots, \mathbf{a}(x_d) )
$,
let 
\begin{equation}
\mathcal{N}
=
\cup_{ L \in \{ 2, 3, 4, \dots \} }
\cup_{ ( l_0, l_1, \ldots, l_L ) \in \N^{ L + 1 } }
(
\times_{ n = 1 }^L 
(
\R^{ l_n \times l_{ n - 1 } } \times \R^{ l_n } 
)
)
,
\end{equation}
let 
$
\mathcal{P}
\colon \mathcal{N} \to \N
$,
$ \mathcal{L}
\colon \mathcal{N} \to \cup_{ L \in \{ 2, 3, 4, \dots \} } \N^{L+1}
$,
and
$
\mathcal{R} \colon 
\mathcal{N} 
\to 
\cup_{ k, l \in \N } C( \R^k, \R^l )
$
be the functions which satisfy 
for all 
$ L \in \{ 2, 3, 4, \dots \} $, 
$ l_0, l_1, \ldots, l_L \in \N $, 
$ 
\Phi = ((W_1, B_1), \ldots, $ $ (W_L, B_L)) \in 
( \times_{ n = 1 }^L (\R^{ l_n \times l_{n-1} } \times \R^{ l_n } ) )
$,
$ x_0 \in \R^{l_0} $, 
$ \ldots $, 
$ x_{ L - 1 } \in \R^{ l_{ L - 1 } } $ 
with 
$ 
\forall \, n \in \N \cap [1,L) \colon 
x_n = \mathbf{A}_{ l_n }( W_n x_{ n - 1 } + B_n )
$
that 
$
\mathcal{P}( \Phi )
=
\textstyle
\sum\nolimits_{
	n = 1
}^L
l_n ( l_{ n - 1 } + 1 )
$,
$
\mathcal{R}(\Phi) \in C( \R^{ l_0 } , \R^{ l_L } )
$, $\mathcal{L}(\Phi) = ( l_0, l_1, \ldots, l_L ) $,
and
\begin{equation}
( \mathcal{R} \Phi )( x_0 ) = W_L x_{L-1} + B_L ,
\end{equation}
let 
$
( \phi^{ m, d }_{ \varepsilon } )_{ 
	(m, d, \varepsilon) \in \{ 0, 1 \} \times \N \times (0,1] 
} 
\subseteq \mathcal{N}
$,
$
( \phi^{ 2, d } )_{ 
	d \in \N
} 
\subseteq \mathcal{N}
$,
and 
assume for all
$ d \in \N $, 
$ \varepsilon \in (0,1] $, 
$ 
x, y \in \R^d
$
that
$
\mathcal{R}( \phi^{ 0, d }_{ \varepsilon } )
\in 
C( \R^d, \R )
$,
$
\mathcal{R}( \phi^{ 1, d }_{ \varepsilon } )
,
\mathcal{R}( \phi^{ 2, d } )
\in
C( \R^d, \R^d )
$,
$
|
f_{ 0, d }( x )
| 
+
\sum_{ i , j = 1 }^d
| a_{ d, i, j } |
\leq 
\kappa d^{ \kappa }
( 1 + \| x \|^{ \kappa }_{ \R^d } )
$,
$
\| 
f_{ 1, d }( x ) 
- 
f_{ 1, d }( y )
\|_{ \R^d }
\leq 
\kappa 
\| x - y \|_{ \R^d } 
$,
$
\|
( \mathcal{R} \phi^{ 1, d }_{ \varepsilon } )(x)    
\|_{ \R^d }	
\leq 
\kappa ( d^{ \kappa } + \| x \|_{ \R^d } )
$,
$
\mathcal{P}( \phi^{ 2, d } ) +
\sum_{ m = 0 }^1
\mathcal{P}( \phi^{ m, d }_{ \varepsilon } ) 
\leq \kappa d^{ \kappa } \varepsilon^{ - \kappa }
$, $ |( \mathcal{R} \phi^{ 0, d }_{ \varepsilon } )(x) - ( \mathcal{R} \phi^{ 0, d }_{ \varepsilon } )(y)| \leq \kappa d^{\kappa} (1 + \|x\|_{\R^d}^{\kappa} + \|y \|_{\R^d}^{\kappa})\|x-y\|_{\R^d}$, $\mathcal{L}(\phi^{2,d}) \in \N^3 $, $(\mathcal{R}\phi^{2,d})(x) = x$,
$ \int_{\R^d} 
\|z \|_{ \R^d }^{\max\{p, 2\} (2\kappa+1)} \allowbreak
\, \nu_d (dz) \leq \eta d^{\eta}$,
and
\begin{equation}
| 
f_{ 0, d }(x) 
- 
( \mathcal{R} \phi^{ 0, d }_{ \varepsilon } )(x)
|
+
\| 
f_{ 1, d }(x) 
- 
( \mathcal{R} \phi^{ 1, d }_{ \varepsilon } )(x)
\|_{ \R^d }
\leq 
\varepsilon \kappa d^{ \kappa }
(
1 + \| x \|^{ \kappa }_{ \R^d }
)
.
\end{equation}
Then 
\begin{enumerate}[(i)]
	\item 
	there exist unique 
	at most polynomially growing functions 
	$ u_d \colon [0,T] \times \R^{d} \to \R $,
	$ d \in \N $, 
	such that
	for all $ d \in \N $, $ x \in \R^d $ it holds that
	$ u_d( 0, x ) = f_{ 0, d }( x ) $
	and such that for all $ d \in \N $ 
	it holds that
	$ u_d $ is a viscosity solution of
	\begin{equation}
	\begin{split}
	( \tfrac{ \partial }{\partial t} u_d )( t, x ) 
	& = 
	( \tfrac{ \partial }{\partial x} u_d )( t, x )
	\,
	f_{ 1, d }( x )
	+
	\sum_{ i, j = 1 }^d
	a_{ d, i, j }
	\,
	( \tfrac{ \partial^2 }{ \partial x_i \partial x_j } u_d )( t, x )
	\end{split}
	\end{equation}
	for $ ( t, x ) \in (0,T) \times \R^d $
	and 
	\item
	there exist
	$
	( 
	\psi_{ d, \varepsilon } 
	)_{ (d , \varepsilon)  \in \N \times (0,1] } \subseteq \mathcal{N}
	$,
	$
	c \in \R
	$
	such that
	for all 
	$
	d \in \N 
	$,
	$
	\varepsilon \in (0,1] 
	$
	it holds that
	$
	\mathcal{P}( \psi_{ d, \varepsilon } ) 
	\leq
	c \, d^c \varepsilon^{ - c } 
	$,
	$
	\mathcal{R}( \psi_{ d, \varepsilon } )
	\in C( \R^{ d }, \R )
	$,
	and
	\begin{equation}
	\left[
	\int_{ \R^d }
	|
	u_d(T,x) - ( \mathcal{R} \psi_{ d, \varepsilon } )( x )
	|^p
	\,
	\nu_d(dx)
	\right]^{ \nicefrac{ 1 }{ p } }
	\leq
	\varepsilon 
	.
	\end{equation}
\end{enumerate}
\end{cor}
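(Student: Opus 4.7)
The plan is to reduce Corollary~\ref{cor:PDE_approx_Lp_gen} to Proposition~\ref{prop:PDE_approx_Lp} by a case distinction on whether $p \geq 2$ or $p \in (0,2)$. Item~(i) concerning the existence and uniqueness of at most polynomially growing viscosity solutions follows verbatim from item~(i) of Proposition~\ref{prop:PDE_approx_Lp}, since that item does not depend on $p$.

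For item~(ii), first observe that in the case $p \in [2,\infty)$ we have $\max\{p,2\}=p$, so the moment hypothesis reads $\int_{\R^d}\|z\|_{\R^d}^{p(2\kappa+1)}\,\nu_d(dz)\leq \eta d^{\eta}$, and all remaining hypotheses of Proposition~\ref{prop:PDE_approx_Lp} are satisfied with the same constants. Invoking Proposition~\ref{prop:PDE_approx_Lp} (with $p=p$, $\eta=\eta$, and all other parameters unchanged) directly yields networks $(\psi_{d,\varepsilon})_{(d,\varepsilon)\in\N\times(0,1]}\subseteq\mathcal{N}$ and $c\in\R$ with the required $L^p(\nu_d)$-error bound and the polynomial parameter bound.

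For the remaining case $p\in(0,2)$, the idea is to exploit that $\nu_d$ is a probability measure, so by Jensen's inequality (or equivalently H\"older's inequality applied with exponent $2/p\geq 1$) we have for every measurable $g\colon\R^d\to\R$ that
\begin{equation*}
\left[\int_{\R^d}|g(x)|^p\,\nu_d(dx)\right]^{\nicefrac{1}{p}}
\leq
\left[\int_{\R^d}|g(x)|^2\,\nu_d(dx)\right]^{\nicefrac{1}{2}}.
\end{equation*}
Since in this case $\max\{p,2\}=2$, the moment hypothesis reads $\int_{\R^d}\|z\|_{\R^d}^{2(2\kappa+1)}\,\nu_d(dz)\leq\eta d^{\eta}$, which is exactly the hypothesis needed to apply Proposition~\ref{prop:PDE_approx_Lp} with exponent $\hat{p}=2$ in place of $p$. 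Applying Proposition~\ref{prop:PDE_approx_Lp} with $\hat{p}=2$ (and all other parameters unchanged) produces $(\psi_{d,\varepsilon})_{(d,\varepsilon)\in\N\times(0,1]}\subseteq\mathcal{N}$ and $\hat{c}\in\R$ with $\mathcal{P}(\psi_{d,\varepsilon})\leq \hat{c}\,d^{\hat{c}}\varepsilon^{-\hat{c}}$, $\mathcal{R}(\psi_{d,\varepsilon})\in C(\R^d,\R)$, and $\bigl[\int_{\R^d}|u_d(T,x)-(\mathcal{R}\psi_{d,\varepsilon})(x)|^2\,\nu_d(dx)\bigr]^{\nicefrac{1}{2}}\leq\varepsilon$. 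Combining this with the Jensen estimate above with $g(x)=u_d(T,x)-(\mathcal{R}\psi_{d,\varepsilon})(x)$ yields the desired $L^p(\nu_d)$-bound with the same networks and the same constant $c=\hat{c}$.

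I do not expect any genuine obstacle here: the real work has been done in Proposition~\ref{prop:PDE_approx_Lp}, and the only delicate point is the bookkeeping on the moment hypothesis, which is why the statement of the corollary uses $\max\{p,2\}(2\kappa+1)$ rather than $p(2\kappa+1)$. The only subtlety to double-check is that the measurability of $u_d(T,\cdot)-(\mathcal{R}\psi_{d,\varepsilon})$ (ensured by continuity of $\mathcal{R}\psi_{d,\varepsilon}$ and the at most polynomial growth/continuity properties of $u_d$ inherited from item~(i)) is enough to justify the Jensen step.
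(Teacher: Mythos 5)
Your reduction is exactly the intended one: the paper gives no separate argument for this corollary and simply records that it follows immediately from Proposition~\ref{prop:PDE_approx_Lp} by applying that proposition with the exponent $\max\{p,2\}$ and then using that $\nu_d$ is a probability measure (Jensen/H\"older) to pass from the $L^{\max\{p,2\}}(\nu_d)$-bound to the $L^p(\nu_d)$-bound, which is precisely your case distinction. The only bookkeeping point you glossed over is that Proposition~\ref{prop:PDE_approx_Lp} assumes $\eta\in[1,\infty)$ whereas the corollary allows $\eta\in(0,\infty)$; this is harmless, since you may invoke the proposition with $\eta$ replaced by $\max\{\eta,1\}$ because $\int_{\R^d}\|z\|_{\R^d}^{\max\{p,2\}(2\kappa+1)}\,\nu_d(dz)\leq\eta d^{\eta}\leq\max\{\eta,1\}\,d^{\max\{\eta,1\}}$ for all $d\in\N$.
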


\subsection{Rectified DNN approximations}
\label{sec:main_result}


\begin{theorem}
\label{thm:PDE_approx_Lp}
Let 
$ T, \kappa, \eta, p \in (0,\infty) $,  
let 
$
A_d = ( a_{ d, i, j } )_{ (i, j) \in \{ 1, \dots, d \}^2 } \in \R^{ d \times d }
$,
$ d \in \N $,
be symmetric positive semidefinite matrices, 
for every $ d \in \N $ 
let 
$
\left\| \cdot \right\|_{ \R^d } \colon \R^d \to [0,\infty)
$
be the $ d $-dimensional Euclidean norm
and let $\nu_d  \colon \mathcal{B}(\R^d) \to [0,1] $ be a probability measure on $\R^d$,
let
$ f_{0,d} \colon \R^d \to \R $, $ d \in \N $,
and
$ f_{ 1, d } \colon \R^d \to \R^d $,
$ d \in \N $,
be  functions,
let 
$ \mathbf{A}_d \colon \R^d \to \R^d $, 
$ d \in \N $, 
be the functions 
which satisfy 
for all 
$
d \in \N
$,
$ x = ( x_1, \dots, x_d ) \in \R^d $
that
$ 
\mathbf{A}_d(x)
=
( \max\{x_1, 0\}, \ldots, \max\{x_d, 0\} )
$,
let 
\begin{equation}
\mathcal{N}
=
\cup_{ L \in \{ 2, 3, 4, \dots \} }
\cup_{ ( l_0, l_1, \ldots, l_L ) \in \N^{ L + 1 } }
(
\times_{ n = 1 }^L 
(
\R^{ l_n \times l_{ n - 1 } } \times \R^{ l_n } 
)
)
,
\end{equation}
let 
$
\mathcal{P}
\colon \mathcal{N} \to \N
$,
$ \mathcal{L}
\colon \mathcal{N} \to \cup_{ L \in \{ 2, 3, 4, \dots \} } \N^{L+1}
$,
and
$
\mathcal{R} \colon 
\mathcal{N} 
\to 
\cup_{ k, l \in \N } C( \R^k, \R^l )
$
be the functions which satisfy 
for all 
$ L \in \{ 2, 3, 4, \dots \} $, 
$ l_0, l_1, \ldots, l_L \in \N $, 
$ 
\Phi = ((W_1, B_1), \ldots, $ $ (W_L, B_L)) \in 
( \times_{ n = 1 }^L (\R^{ l_n \times l_{n-1} } \times \R^{ l_n } ) )
$,
$ x_0 \in \R^{l_0} $, 
$ \ldots $, 
$ x_{ L - 1 } \in \R^{ l_{ L - 1 } } $ 
with 
$ 
\forall \, n \in \N \cap [1,L) \colon 
x_n = \mathbf{A}_{ l_n }( W_n x_{ n - 1 } + B_n )
$
that 
$
\mathcal{P}( \Phi )
=
\textstyle
\sum\nolimits_{
n = 1
}^L
l_n ( l_{ n - 1 } + 1 )
$,
$
\mathcal{R}(\Phi) \in C( \R^{ l_0 } , \R^{ l_L } )
$, $\mathcal{L}(\Phi) = ( l_0, l_1, \ldots, l_L ) $,
and
\begin{equation}
( \mathcal{R} \Phi )( x_0 ) = W_L x_{L-1} + B_L ,
\end{equation}
let 
$
( \phi^{ m, d }_{ \varepsilon } )_{ 
(m, d, \varepsilon) \in \{ 0, 1 \} \times \N \times (0,1] 
} 
\subseteq \mathcal{N}
$,
and
assume for all
$ d \in \N $, 
$ \varepsilon \in (0,1] $, 
$ 
x, y \in \R^d
$
that
$
\mathcal{R}( \phi^{ 0, d }_{ \varepsilon } )
\in 
C( \R^d, \R )
$,
$
\mathcal{R}( \phi^{ 1, d }_{ \varepsilon } )
\in
C( \R^d, \R^d )
$,
$
|
f_{ 0, d }( x )
| 
+
\sum_{ i , j = 1 }^d
| a_{ d, i, j } |
\leq 
\kappa d^{ \kappa }
( 1 + \| x \|^{ \kappa }_{ \R^d } )
$,
$
\| 
f_{ 1, d }( x ) 
- 
f_{ 1, d }( y )
\|_{ \R^d }
\leq 
\kappa 
\| x - y \|_{ \R^d } 
$,
$
\|
( \mathcal{R} \phi^{ 1, d }_{ \varepsilon } )(x)    
\|_{ \R^d }	
\leq 
\kappa ( d^{ \kappa } + \| x \|_{ \R^d } )
$,
$
\sum_{ m = 0 }^1
\mathcal{P}( \phi^{ m, d }_{ \varepsilon } ) 
\leq \kappa d^{ \kappa } \varepsilon^{ - \kappa }
$, $ |( \mathcal{R} \phi^{ 0, d }_{ \varepsilon } )(x) - ( \mathcal{R} \phi^{ 0, d }_{ \varepsilon } )(y)| \leq \kappa d^{\kappa} (1 + \|x\|_{\R^d}^{\kappa} + \|y \|_{\R^d}^{\kappa})\|x-y\|_{\R^d}$, 
$ \int_{\R^d} 
\|z \|_{ \R^d }^{\max\{p, 2\} (4\kappa+15)} \allowbreak
\, \nu_d (dz) $ $ \leq \eta d^{\eta}$,
and
\begin{equation}
| 
f_{ 0, d }(x) 
- 
( \mathcal{R} \phi^{ 0, d }_{ \varepsilon } )(x)
|
+
\| 
f_{ 1, d }(x) 
- 
( \mathcal{R} \phi^{ 1, d }_{ \varepsilon } )(x)
\|_{ \R^d }
\leq 
\varepsilon \kappa d^{ \kappa }
(
1 + \| x \|^{ \kappa }_{ \R^d }
)
.
\end{equation}
Then 
\begin{enumerate}[(i)]
\item 
\label{item:thm:existence_vis}
there exist unique 
at most polynomially growing functions 
$ u_d \colon [0,T] \times \R^{d} \to \R $,
$ d \in \N $, 
such that
for all $ d \in \N $, $ x \in \R^d $ it holds that
$ u_d( 0, x ) = f_{ 0, d }( x ) $
and such that for all $ d \in \N $ 
it holds that
$ u_d $ is a viscosity solution of
\begin{equation}
\begin{split}
( \tfrac{ \partial }{\partial t} u_d )( t, x ) 
& = 
( \tfrac{ \partial }{\partial x} u_d )( t, x )
\,
f_{ 1, d }( x )
+
\sum_{ i, j = 1 }^d
a_{ d, i, j }
\,
( \tfrac{ \partial^2 }{ \partial x_i \partial x_j } u_d )( t, x )
\end{split}
\end{equation}
for $ ( t, x ) \in (0,T) \times \R^d $
and 
\item
\label{item:thm:main_statement_Lp}
there exist
$
( 
\psi_{ d, \varepsilon } 
)_{ (d , \varepsilon)  \in \N \times (0,1] } \subseteq \mathcal{N}
$,
$
c \in \R
$
such that
for all 
$
d \in \N 
$,
$
\varepsilon \in (0,1] 
$
it holds that
$
\mathcal{P}( \psi_{ d, \varepsilon } ) 
\leq
c \, d^c \varepsilon^{ - c } 
$,
$
\mathcal{R}( \psi_{ d, \varepsilon } )
\in C( \R^{ d }, \R )
$,
and
\begin{equation}
\left[
\int_{ \R^d }
|
u_d(T,x) - ( \mathcal{R} \psi_{ d, \varepsilon } )( x )
|^p
\,
\nu_d(dx)
\right]^{ \nicefrac{ 1 }{ p } }
\leq
\varepsilon 
.
\end{equation}
\end{enumerate}
\end{theorem}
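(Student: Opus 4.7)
The plan is to reduce Theorem~\ref{thm:PDE_approx_Lp} to Corollary~\ref{cor:PDE_approx_Lp_gen} by exploiting the fact that, in the rectifier setting, the artificial identity network required by the corollary can always be supplied by hand. Concretely, since the activation functions $\mathbf{A}_d$ in Theorem~\ref{thm:PDE_approx_Lp} are componentwise rectifiers, Lemma~\ref{lem:identity} produces, for every $d \in \N$, a network $\phi^{2,d} \in \mathcal{N}$ with $\mathcal{L}(\phi^{2,d}) = (d, 2d, d) \in \N^3$, $\mathcal{R}(\phi^{2,d}) \in C(\R^d, \R^d)$, and $(\mathcal{R}\phi^{2,d})(x) = x$ for all $x \in \R^d$. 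The parameter count of this identity network is
\begin{equation}
\mathcal{P}(\phi^{2,d}) = 2 d (d+1) + d (2d+1) = 4 d^2 + 3 d \leq 7 d^2,
\end{equation}
which is polynomial in $d$, independent of $\varepsilon$.

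Next I would inflate the constant $\kappa$ from Theorem~\ref{thm:PDE_approx_Lp} to the larger constant $\tilde{\kappa} := 2 \kappa + 7$ and verify that $(T, \tilde{\kappa}, \eta, p)$ together with the given data $(f_{0,d}), (f_{1,d}), (A_d), (\nu_d), (\phi^{m,d}_\varepsilon)$ and the above identity networks $(\phi^{2,d})$ satisfy every hypothesis of Corollary~\ref{cor:PDE_approx_Lp_gen}. Every original assumption (Lipschitz continuity, polynomial growth, the DNN-approximation bound \eqref{eq:appr:coef}, etc.) continues to hold with $\tilde{\kappa} \geq \kappa$ in place of $\kappa$ since $\kappa \mapsto \kappa d^{\kappa} \varepsilon^{-\kappa}$ is nondecreasing for $d \in \N$ and $\varepsilon \in (0,1]$. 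The new combined parameter bound follows from $\varepsilon \leq 1$, $d \geq 1$, and $\kappa + 7 \leq 2\kappa + 7$ via
\begin{equation}
\mathcal{P}(\phi^{2,d}) + \sum_{m=0}^{1} \mathcal{P}(\phi^{m,d}_\varepsilon) \leq 7 d^2 + \kappa d^\kappa \varepsilon^{-\kappa} \leq (7+\kappa) d^{\tilde{\kappa}} \varepsilon^{-\tilde{\kappa}} \leq \tilde{\kappa} d^{\tilde{\kappa}} \varepsilon^{-\tilde{\kappa}}.
\end{equation}
Crucially, the moment hypothesis required by Corollary~\ref{cor:PDE_approx_Lp_gen} with the inflated constant is exactly
\begin{equation}
\int_{\R^d} \|z\|_{\R^d}^{\max\{p,2\}(2\tilde{\kappa}+1)} \, \nu_d(dz) = \int_{\R^d} \|z\|_{\R^d}^{\max\{p,2\}(4\kappa+15)} \, \nu_d(dz) \leq \eta d^\eta,
\end{equation}
which is precisely the moment assumption stated in Theorem~\ref{thm:PDE_approx_Lp}. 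The choice $\tilde{\kappa} = 2\kappa+7$ was tuned so as to make these two exponents coincide.

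Having verified all hypotheses, a direct application of Corollary~\ref{cor:PDE_approx_Lp_gen} produces both the unique at most polynomially growing viscosity solutions $u_d$ asserted in item~\eqref{item:thm:existence_vis} and the DNNs $(\psi_{d,\varepsilon})$ together with a constant $c$ asserted in item~\eqref{item:thm:main_statement_Lp}, completing the proof. I do not anticipate any genuine obstacle here, as all the probabilistic, analytic, and DNN-calculus content of the argument is already packaged into Corollary~\ref{cor:PDE_approx_Lp_gen}; the only substantive ingredient specific to the rectifier case is Lemma~\ref{lem:identity}, which supplies the identity-representing DNN whose availability was an abstract hypothesis in the corollary.
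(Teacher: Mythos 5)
Your proposal is correct and follows essentially the same route as the paper's own proof: supply the identity network from Lemma~\ref{lem:identity} with $\mathcal{P}(\phi^{2,d})\leq 7d^2$, inflate $\kappa$ to $2\kappa+7$ (chosen exactly so that $\max\{p,2\}(2(2\kappa+7)+1)=\max\{p,2\}(4\kappa+15)$ matches the stated moment hypothesis), and apply Corollary~\ref{cor:PDE_approx_Lp_gen}. The only point you pass over lightly is the verification that the growth and local Lipschitz bounds of the form $\kappa d^{\kappa}(1+\|x\|_{\R^d}^{\kappa})$ and $\kappa d^{\kappa}(1+\|x\|_{\R^d}^{\kappa}+\|y\|_{\R^d}^{\kappa})\|x-y\|_{\R^d}$ remain valid with $2\kappa+7$ in place of $\kappa$, which the paper justifies with a short Young-inequality estimate; this is elementary and does not affect the validity of your argument.
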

\begin{proof}[Proof of Theorem~\ref{thm:PDE_approx_Lp}]
Throughout this proof let $\mathbf{a} \colon \R \to \R$ be the function which satisfies for all $x \in \R$ that
\begin{equation}
\mathbf{a} ( x )= \max\{x, 0\}
\end{equation}
and let $(\phi^{2,d})_{d \in \N} \subseteq \mathcal{N}$ satisfy for all $d \in \N$, $x \in \R^d$ that  $\mathcal{R}(\psi) \in C(\R^d, \R^d)$, $\mathcal{L}(\phi^{2,d}) = (d, 2d, d)$, and $(\mathcal{R}\phi^{2,d})(x) = x$ (cf.~Lemma~\ref{lem:identity}). Observe that for all $d \in \N$, $x = (x_1, \ldots, x_d) \in \R^d$ it holds that
\begin{equation}
\label{eq:thm:A:d}
\mathbf{A}_d(x) = (\mathbf{a}(x_1), \ldots, \mathbf{a}(x_d)).
\end{equation}
Next note that for all $ d \in \N$ it holds that
\begin{equation}
\mathcal{P}(\phi^{2,d}) = 2d(d+1) + d(2d+1) = 2d^2 +2d +2d^2 +d = 4d^2 +3d \leq 7 d^2.
\end{equation}
This proves that for all $d \in \N$, $\varepsilon \in (0, 1]$ it holds that
\begin{equation}
\label{eq:thm:sum:par}
\begin{split}
\mathcal{P}(\phi^{2,d}) +
\sum_{ m = 0 }^1
\mathcal{P}( \phi^{ m, d }_{ \varepsilon } ) 
&\leq 7d^2 + \kappa d^{ \kappa } \varepsilon^{ - \kappa } \leq  (\kappa +7) d^{ \kappa +2 } \varepsilon^{ - \kappa}\\
&\leq (2\kappa +7) d^{ 2\kappa +7 } \varepsilon^{ - (2\kappa +7)}.
\end{split}
\end{equation}
Moreover, observe that Young's inequality assures that for all $\alpha \in [0, \infty)$ it holds that
\begin{equation}
\alpha^{\kappa} \leq \frac{\kappa }{2\kappa +7} \cdot \alpha^{2\kappa+7} + \frac{\kappa +7}{2\kappa+7} \leq \alpha^{2\kappa+7} + \frac{\kappa +7}{2\kappa+7}.
\end{equation}
This ensures that for all 
 $d \in \N$, $x, y \in \R^d$ it holds that
\begin{equation}
\kappa (1+ \|x\|_{\R^d}^{\kappa}) \leq \kappa(2+\|x\|_{\R^d}^{2\kappa+7}) \leq (2\kappa +7) (1+ \|x\|_{\R^d}^{2\kappa+7})
\end{equation}
and
\begin{equation}
\begin{split}
\kappa (1+ \|x\|_{\R^d}^{\kappa} + \|y\|_{\R^d}^{\kappa}) 
&\leq \kappa \left( 1 + \frac{2(\kappa+7)}{2\kappa + 7} + \|x\|_{\R^d}^{2\kappa+7} + \|y\|_{\R^d}^{2\kappa+7} \right)\\
& = \frac{\kappa(4\kappa +21)}{2\kappa +7} + \kappa (\|x\|_{\R^d}^{2\kappa+7} + \|y\|_{\R^d}^{2\kappa+7})\\
& \leq 2\kappa +7 + (2\kappa +7)(\|x\|_{\R^d}^{2\kappa+7} + \|y\|_{\R^d}^{2\kappa+7})\\
& = (2\kappa +7)(1 + \|x\|_{\R^d}^{2\kappa+7} + \|y\|_{\R^d}^{2\kappa+7}).
\end{split}
\end{equation}
Combining this with \eqref{eq:thm:A:d}, \eqref{eq:thm:sum:par}, the fact that $ \mathbf{A}_d \colon \R^d \to \R^d $, 
$ d \in \N $, 
and 
$ \mathbf{a} \colon \R \to \R$
are 
continuous functions, and  Corollary~\ref{cor:PDE_approx_Lp_gen} (with
$T =T$, 
$\kappa = 2\kappa+7$,
$\eta = \eta$,
$p = p$,
$(A_d)_{d \in \N} = (A_d)_{d \in \N}$,
$(\nu_d)_{d \in \N} = (\nu_d)_{d \in \N}$,
$(f_{0, d})_{d \in \N} = (f_{0,d})_{d \in \N}$,
$(f_{1,d})_{d \in \N} = (f_{1,d})_{d \in \N}$,
$(\mathbf{A}_d)_{d \in \N} = (\mathbf{A}_d)_{d \in \N}$,
$\mathbf{a} = \mathbf{a}$,
$\mathcal{N} = \mathcal{N}$,
$ \mathcal{P} = \mathcal{P}$,
$ \mathcal{L} = \mathcal{L}$,
$ \mathcal{R} = \mathcal{R}$,
$ (\phi^{ m, d }_{ \varepsilon })_{ 
	(m, d, \varepsilon) \in \{ 0, 1 \} \times \N \times (0,1] 
} $ $ = (\phi^{ m, d }_{ \varepsilon })_{ 
(m, d, \varepsilon) \in \{ 0, 1 \} \times \N \times (0,1] 
} $,
$ (\phi^{ 2, d })_{d \in \N}  = (\phi^{ 2, d })_{d \in \N} $
 in the notation of Corollary~\ref{cor:PDE_approx_Lp_gen})
establishes items~\eqref{item:thm:existence_vis}--\eqref{item:thm:main_statement_Lp}.
The proof of Theorem~\ref{thm:PDE_approx_Lp} is thus completed.
\end{proof}

\subsection{Rectified DNN approximations on the $ d $-dimensional unit cube}
\label{sec:lebesgue}

\begin{cor}
\label{cor:lebesgue}
Let 
$ T, \kappa,  p \in (0,\infty) $,  
let 
$
A_d = ( a_{ d, i, j } )_{ (i, j) \in \{ 1, \dots, d \}^2 } \in \R^{ d \times d }
$,
$ d \in \N $,
be symmetric positive semidefinite matrices, 
for every $ d \in \N $ 
let 
$
\left\| \cdot \right\|_{ \R^d } \colon \R^d \to [0,\infty)
$
be the $ d $-dimensional Euclidean norm,
let
$ f_{0,d} \colon \R^d \to \R $, $ d \in \N $,
and
$ f_{ 1, d } \colon \R^d \to \R^d $,
$ d \in \N $,
be  functions,
let 
$ \mathbf{A}_d \colon \R^d \to \R^d $, 
$ d \in \N $, 
be the functions 
which satisfy 
for all 
$
d \in \N
$,
$ x = ( x_1, \dots, x_d ) \in \R^d $
that
$ 
\mathbf{A}_d(x)
=
( \max\{x_1, 0\}, \ldots, \max\{x_d, 0\} )
$,
let 
\begin{equation}
\mathcal{N}
=
\cup_{ L \in \{ 2, 3, 4, \dots \} }
\cup_{ ( l_0, l_1, \ldots, l_L ) \in \N^{ L + 1 } }
(
\times_{ n = 1 }^L 
(
\R^{ l_n \times l_{ n - 1 } } \times \R^{ l_n } 
)
)
,
\end{equation}
let 
$
\mathcal{P}
\colon \mathcal{N} \to \N
$
and
$
\mathcal{R} \colon 
\mathcal{N} 
\to 
\cup_{ k, l \in \N } C( \R^k, \R^l )
$
be the functions which satisfy 
for all 
$ L \in \{ 2, 3, 4, \dots \} $, 
$ l_0, l_1, \ldots, l_L \in \N $, 
$ 
\Phi = ((W_1, B_1), \ldots, $ $ (W_L, B_L)) \in 
( \times_{ n = 1 }^L (\R^{ l_n \times l_{n-1} } \times \R^{ l_n } ) )
$,
$ x_0 \in \R^{l_0} $, 
$ \ldots $, 
$ x_{ L - 1 } \in \R^{ l_{ L - 1 } } $ 
with 
$ 
\forall \, n \in \N \cap [1,L) \colon 
x_n = \mathbf{A}_{ l_n }( W_n x_{ n - 1 } + B_n )
$
that 
$
\mathcal{P}( \Phi )
=
\textstyle
\sum\nolimits_{
	n = 1
}^L
l_n ( l_{ n - 1 } + 1 )
$,
$
\mathcal{R}(\Phi) \in C( \R^{ l_0 } , \R^{ l_L } )
$,
and
\begin{equation}
( \mathcal{R} \Phi )( x_0 ) = W_L x_{L-1} + B_L ,
\end{equation}
let 
$
( \phi^{ m, d }_{ \varepsilon } )_{ 
	(m, d, \varepsilon) \in \{ 0, 1 \} \times \N \times (0,1] 
} 
\subseteq \mathcal{N}
$
,
and 
assume for all
$ d \in \N $, 
$ \varepsilon \in (0,1] $, 
$ 
x, y \in \R^d
$
that
$
\mathcal{R}( \phi^{ 0, d }_{ \varepsilon } )
\in 
C( \R^d, \R )
$,
$
\mathcal{R}( \phi^{ 1, d }_{ \varepsilon } )
\in
C( \R^d, \R^d )
$,
$
|
f_{ 0, d }( x )
| 
+
\sum_{ i , j = 1 }^d
| a_{ d, i, j } |
\leq 
\kappa d^{ \kappa }
( 1 + \| x \|^{ \kappa }_{ \R^d } )
$,
$
\| 
f_{ 1, d }( x ) 
- 
f_{ 1, d }( y )
\|_{ \R^d }
\leq 
\kappa 
\| x - y \|_{ \R^d } 
$,
$
\|
( \mathcal{R} \phi^{ 1, d }_{ \varepsilon } )(x)    
\|_{ \R^d }	
\leq 
\kappa ( d^{ \kappa } + \| x \|_{ \R^d } )
$,
$ 
\sum_{ m = 0 }^1
\mathcal{P}( \phi^{ m, d }_{ \varepsilon } ) 
\leq \kappa d^{ \kappa } \varepsilon^{ - \kappa }
$, $ |( \mathcal{R} \phi^{ 0, d }_{ \varepsilon } )(x) - ( \mathcal{R} \phi^{ 0, d }_{ \varepsilon } )(y)| \leq \kappa d^{\kappa} (1 + \|x\|_{\R^d}^{\kappa} + \|y \|_{\R^d}^{\kappa})\|x-y\|_{\R^d}$, 
and
\begin{equation}
| 
f_{ 0, d }(x) 
- 
( \mathcal{R} \phi^{ 0, d }_{ \varepsilon } )(x)
|
+
\| 
f_{ 1, d }(x) 
- 
( \mathcal{R} \phi^{ 1, d }_{ \varepsilon } )(x)
\|_{ \R^d }
\leq 
\varepsilon \kappa d^{ \kappa }
(
1 + \| x \|^{ \kappa }_{ \R^d }
)
.
\end{equation}
Then 
\begin{enumerate}[(i)]
	\item 
	\label{item:01:viscosity}
	there exist unique 
	at most polynomially growing functions 
	$ u_d \colon [0,T] \times \R^{d} \to \R $,
	$ d \in \N $, 
	such that
	for all $ d \in \N $, $ x \in \R^d $ it holds that
	$ u_d( 0, x ) = f_{ 0, d }( x ) $
	and such that for all $ d \in \N $ 
	it holds that
	$ u_d $ is a viscosity solution of
	\begin{equation}
	\begin{split}
	( \tfrac{ \partial }{\partial t} u_d )( t, x ) 
	& = 
	( \tfrac{ \partial }{\partial x} u_d )( t, x )
	\,
	f_{ 1, d }( x )
	+
	\sum_{ i, j = 1 }^d
	a_{ d, i, j }
	\,
	( \tfrac{ \partial^2 }{ \partial x_i \partial x_j } u_d )( t, x )
	\end{split}
	\end{equation}
	for $ ( t, x ) \in (0,T) \times \R^d $
	and 
	\item
	\label{item:01:bound}
	there exist
	$
	( 
	\psi_{ d, \varepsilon } 
	)_{ (d , \varepsilon)  \in \N \times (0,1] } \subseteq \mathcal{N}
	$,
	$
	c \in \R
	$
	such that
	for all 
	$
	d \in \N 
	$,
	$
	\varepsilon \in (0,1] 
	$
	it holds that
	$
	\mathcal{P}( \psi_{ d, \varepsilon } ) 
	\leq
	c \, d^c \varepsilon^{ - c } 
	$,
	$
	\mathcal{R}( \psi_{ d, \varepsilon } )
	\in C( \R^{ d }, \R )
	$,
	and
	\begin{equation}
	\left[
	\int_{ [0,1]^d }
	|
	u_d(T,x) - ( \mathcal{R} \psi_{ d, \varepsilon } )( x )
	|^p
	\,
	dx
	\right]^{ \nicefrac{ 1 }{ p } }
	\leq
	\varepsilon 
	.
	\end{equation}
\end{enumerate}
\end{cor}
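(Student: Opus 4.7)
The plan is to derive Corollary~\ref{cor:lebesgue} as an immediate specialization of Theorem~\ref{thm:PDE_approx_Lp} by choosing the probability measures $\nu_d$ to be the uniform distribution on $[0,1]^d$ and by verifying that the polynomial moment hypothesis imposed in Theorem~\ref{thm:PDE_approx_Lp} is automatically satisfied in this bounded setting.

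More precisely, I would first let $\nu_d \colon \mathcal{B}(\R^d) \to [0,1]$, $d \in \N$, denote the Borel probability measures on $\R^d$ given for all $d \in \N$, $A \in \mathcal{B}(\R^d)$ by $\nu_d(A) = \lambda_d( A \cap [0,1]^d )$, where $\lambda_d$ denotes the $d$-dimensional Lebesgue-Borel measure. With this choice, every $x = (x_1, \ldots, x_d) \in [0,1]^d$ satisfies $\|x\|_{\R^d}^2 = \sum_{i=1}^d x_i^2 \leq d$, so that for all $q \in [0,\infty)$, $d \in \N$ it holds that
\begin{equation}
\int_{\R^d} \|z\|_{\R^d}^{q} \, \nu_d(dz) = \int_{[0,1]^d} \|z\|_{\R^d}^{q} \, dz \leq d^{q/2}.
\end{equation}
Specializing this to $q = \max\{p,2\}(4\kappa+15)$ yields a bound of the form $\eta\, d^{\eta}$ with $\eta = \max\{p,2\}(4\kappa+15)/2 + 1 \in (0,\infty)$, which is precisely the moment hypothesis required by Theorem~\ref{thm:PDE_approx_Lp}.

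Having established the moment bound, the remaining hypotheses of Theorem~\ref{thm:PDE_approx_Lp} coincide with those of Corollary~\ref{cor:lebesgue} verbatim, so I would apply Theorem~\ref{thm:PDE_approx_Lp} with $T = T$, $\kappa = \kappa$, $p = p$, the above choice of $(\nu_d)_{d \in \N}$, and the above value of $\eta$. Item~\eqref{item:thm:existence_vis} of Theorem~\ref{thm:PDE_approx_Lp} yields item~\eqref{item:01:viscosity} of Corollary~\ref{cor:lebesgue}, and item~\eqref{item:thm:main_statement_Lp} provides the sequence $(\psi_{d,\varepsilon})_{(d,\varepsilon) \in \N \times (0,1]} \subseteq \mathcal{N}$ and the constant $c \in \R$ such that for all $d \in \N$, $\varepsilon \in (0,1]$ it holds that $\mathcal{P}(\psi_{d,\varepsilon}) \leq c\, d^c \varepsilon^{-c}$, $\mathcal{R}(\psi_{d,\varepsilon}) \in C(\R^d, \R)$, and
\begin{equation}
\left[ \int_{[0,1]^d} | u_d(T,x) - (\mathcal{R}\psi_{d,\varepsilon})(x) |^p \, dx \right]^{\nicefrac{1}{p}} = \left[ \int_{\R^d} | u_d(T,x) - (\mathcal{R}\psi_{d,\varepsilon})(x) |^p \, \nu_d(dx) \right]^{\nicefrac{1}{p}} \leq \varepsilon,
\end{equation}
which is item~\eqref{item:01:bound} of Corollary~\ref{cor:lebesgue}. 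No genuine obstacle is expected; the entire content of the argument is the trivial moment estimate on the unit cube, and the remainder is bookkeeping to match the hypotheses of Theorem~\ref{thm:PDE_approx_Lp}.
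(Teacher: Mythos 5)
Your proposal is correct and follows essentially the same route as the paper: define $\nu_d$ as the uniform distribution on $[0,1]^d$, verify the moment hypothesis $\int_{\R^d}\|z\|_{\R^d}^{\max\{p,2\}(4\kappa+15)}\,\nu_d(dz)\leq\eta\, d^{\eta}$ (the paper gets the same bound $d^{\max\{p,2\}(4\kappa+15)/2}$ by citing Grohs et al., you get it by the elementary estimate $\|z\|_{\R^d}\leq\sqrt{d}$ on the cube, and both choices of $\eta$ work), and then apply Theorem~\ref{thm:PDE_approx_Lp} together with the identity $\int_{\R^d}|g|\,d\nu_d=\int_{[0,1]^d}|g(x)|\,dx$ for continuous $g$.
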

\begin{proof}[Proof of Corollary~\ref{cor:lebesgue}]
	Throughout this proof for every $d \in \N$ let $\lambda_{d} \colon \mathcal{B}(\R^d) \to [0, \infty]$ be the Lebesgue-Borel measure on $\R^d$ and let  $\nu_d \colon  \mathcal{B}(\R^d) \to [0,1]$ be the function which satisfies for all $B \in \mathcal{B}(\R^d)$ that
	\begin{equation}
	\label{eq:measure:def}
	\nu_d(B) = \lambda_{d}(B \cap [0, 1]^d).
	\end{equation}
Observe that \eqref{eq:measure:def} implies that for all $d \in \N$ it holds that $\nu_d$ is a probability measure on $\R^d$. This  and \eqref{eq:measure:def} ensure that for all $d \in \N$, $g \in C(\R^d, \R)$ it holds that
\begin{equation}
\label{eq:integral:equiv}
\int_{\R^d} |g(x)| \, \nu_d(dx) = \int_{ [0,1]^d } |g(x)| \, dx.
\end{equation}		
Combining this with,  e.g., Grohs et al.~\cite[Lemma~3.15]{GrohsWurstemberger2018} demonstrates that for all $d \in \N$ it holds that
\begin{equation}
\begin{split}
&\int_{\R^d} 
\|x \|_{ \R^d }^{\max\{p, 2\} (4\kappa+15)}
\, \nu_d (dx) 
= 
\int_{[0,1]^d} 
\|x \|_{ \R^d }^{\max\{p, 2\} (4\kappa+15)}
\, dx \\
&  \leq d^{\nicefrac{\max\{p, 2\} (4\kappa+15)}{2} } \leq d^{\max\{p, 2\}(2\kappa+8)} \leq \max\{p, 2\}(2\kappa+8) d^{\max\{p, 2\}(2\kappa+8)}.
\end{split}
\end{equation}
Theorem~\ref{thm:PDE_approx_Lp} (with
$T =T$, 
$\kappa = \kappa$,
$\eta = \max\{p, 2\}(2\kappa+8)$,
$p = p$,
$(A_d)_{d \in \N} = (A_d)_{d \in \N}$,
$(\nu_d)_{d \in \N} = (\nu_d)_{d \in \N}$,
$(f_{0, d})_{d \in \N} = (f_{0,d})_{d \in \N}$,
$(f_{1,d})_{d \in \N} = (f_{1,d})_{d \in \N}$,
$(\mathbf{A}_d)_{d \in \N} = (\mathbf{A}_d)_{d \in \N}$,
$\mathcal{N} = \mathcal{N}$,
$ \mathcal{P} = \mathcal{P}$,
$ \mathcal{R} = \mathcal{R}$,
$ (\phi^{ m, d }_{ \varepsilon })_{ 
	(m, d, \varepsilon) \in \{ 0, 1 \} \times \N \times (0,1] 
}  = (\phi^{ m, d }_{ \varepsilon })_{ 
	(m, d, \varepsilon) \in \{ 0, 1 \} \times \N \times (0,1] 
} $,
$ (\phi^{ 2, d })_{d \in \N}  = (\phi^{ 2, d })_{d \in \N} $
in the notation of Theorem~\ref{thm:PDE_approx_Lp}) and \eqref{eq:integral:equiv}  hence 
establish items~\eqref{item:01:viscosity}--\eqref{item:01:bound}.
The proof of Corollary~\ref{cor:lebesgue} is thus completed.
\end{proof}

\subsection*{Acknowledgements}
David Kofler is gratefully acknowledged for his useful 
comments regarding the a~priori estimates in Subsections~\ref{sec:a_priori_SDEs}--\ref{sec:a_priori_BMs}.
This work has been partially supported through the research grant 
with the title 
``Higher order numerical approximation methods 
for stochastic partial differential equations''
(Number 175699)
from the Swiss National Science Foundation (SNSF).
Furthermore,
this work has been partially supported through
the ETH Research Grant \mbox{ETH-47 15-2}
``Mild stochastic calculus and numerical approximations for nonlinear stochastic evolution equations with L\'evy noise''.

\bibliographystyle{acm}
\bibliography{../bibfileNN}

\end{document}